\newtheorem{theorem}{Theorem}[section]
\newtheorem{definition}[theorem]{Definition}
\newtheorem{proposition}[theorem]{Proposition}
\newtheorem{lemma}[theorem]{Lemma}
\begin{document}

\title{Liberations and twists of real and complex spheres}

\author{Teodor Banica}
\address{T.B.: Department of Mathematics, Cergy-Pontoise University, 95000 Cergy-Pontoise, France. {\tt teodor.banica@u-cergy.fr}}

\subjclass[2000]{46L65 (46L54, 46L87)}
\keywords{Quantum isometry, Noncommutative sphere}

\begin{abstract}
We study the 10 noncommutative spheres obtained by liberating, twisting, and liberating+twisting the real and complex spheres $S^{N-1}_\mathbb R,S^{N-1}_\mathbb C$. At the axiomatic level, we show that, under very strong axioms, these 10 spheres are the only ones. Our main results concern the computation of the quantum isometry groups of these 10 spheres, taken in an affine real/complex sense. We formulate as well a proposal for an extended formalism, comprising 18 spheres.
\end{abstract}

\maketitle

\section*{Introduction}

A remarkable discovery, due to Goswami \cite{go1}, is that each noncommutative compact Riemannian manifold $X$ in the sense of Connes \cite{co1}, \cite{co2}, \cite{co3} has a quantum isometry group $G^+(X)$. While the classical, connected manifolds cannot have genuine quantum isometries \cite{gjo}, for the non-classical or non-connected manifolds the quantum isometry group $G^+(X)$ can be bigger than the usual isometry group $G(X)$, containing therefore ``non-classical'' symmetries, worth to be investigated.

As a motivating example, the symmetries of the finite noncommutative manifold coming from the Standard Model, axiomatized by Chamseddine and Connes in \cite{cc1}, \cite{cc2}, were studied by Bhowmick, D'Andrea, Dabrowski and Das in \cite{bdd}, \cite{bd+}. One of their findings is that the usual gauge group component $PU_3$ becomes replaced in this way by the quantum group $PU_3^+=PO_3^+=\bar{S}_9^+$. Here $O_N^+,U_N^+,S_N^+$ are the quantum groups constructed by Wang in \cite{wa1}, \cite{wa2}, and the twisting result $PO_3^+=\bar{S}_9^+$ comes from \cite{bbs}.

At a theoretical level, one interesting question is about adapting the various classical computations of isometry groups. Perhaps the most basic such computation is $G(S^{N-1}_\mathbb R)=O_N$, where $S^{N-1}_\mathbb R\subset\mathbb R^N$ is the standard sphere. Yet another standard computation, this time in the disconnected manifold case, is $G(X_N)=S_N$, where $X_N=\{e_1,\ldots,e_N\}\subset\mathbb R^N$ is the simplex, with $e_1,\ldots,e_N$ being the standard basis vectors of $\mathbb R^N$. 

Such results are of course quite trivial, but their noncommutative extensions, not always. In the discrete manifold case we have $G^+(X_N)=S_N^+$, but more complicated computations, such as $G(Y_N)=H_N$, where $Y_N=\{\pm e_1\ldots\pm e_N\}\subset\mathbb R^N$ is the hypercube, and $H_N=\mathbb Z_2\wr S_N$, lead to some interesting questions. See \cite{bbc}, \cite{go2}.

In the continuous manifold case, which is the one that we are interested in here, the extensions of the basic computation $G(S^{N-1}_\mathbb R)=O_N$ lead to interesting questions as well. This is well-known for instance in the context of the Podle\'s spheres \cite{pod}, and we refer here to \cite{bg3}, \cite{dd+}, \cite{var}. More advanced examples of noncommutative spheres, having more intricate algebraic and differential geometry, come from  \cite{cdu}, \cite{cla}.

In our joint work with Goswami \cite{bgo} we introduced two basic generalizations of $S^{N-1}_\mathbb R$, namely the half-liberated sphere $S^{N-1}_{\mathbb R,*}$, and the free sphere $S^{N-1}_{\mathbb R,+}$. These spheres appear by definition as dual objects to certain universal $C^*$-algebras, inspired by the easy quantum group philosophy \cite{bsp}. More precisely, the surjections at the $C^*$-algebra level produce inclusions $S^{N-1}_\mathbb R\subset S^{N-1}_{\mathbb R,*}\subset S^{N-1}_{\mathbb R,+}$, which are related, via the quantum isometry group construction, to the basic inclusions $O_N\subset O_N^*\subset O_N^+$ from \cite{bsp}, \cite{bve}.

Our purpose here is three-fold:
\begin{enumerate}
\item We will review the work in \cite{bgo}, with a new axiomatization of these 3 spheres, less relying on the structure of the corresponding quantum isometry groups.

\item We will present a unitary extension of \cite{bgo}, based on $G(S^{N-1}_\mathbb C)=U_N$, with the isometry group being taken in an affine complex sense.

\item We will present as well a twisting extension of \cite{bgo}, in both the real and complex cases, involving the group $\bar{O}_N$ from \cite{bbc}, and a number of related objects. 
\end{enumerate}

We will construct in this way 10 noncommutative spheres, as follows:
$$\xymatrix@R=15mm@C=15mm{
S^{N-1}_\mathbb C\ar[r]&S^{N-1}_{\mathbb C,**}\ar[r]&S^{N-1}_{\mathbb C,+}&\bar{S}^{N-1}_{\mathbb C,**}\ar@.[l]&\bar{S}^{N-1}_\mathbb C\ar@.[l]\\
S^{N-1}_\mathbb R\ar[r]\ar[u]&S^{N-1}_{\mathbb R,*}\ar[r]\ar[u]&S^{N-1}_{\mathbb R,+}\ar[u]&\bar{S}^{N-1}_{\mathbb R,*}\ar@.[u]\ar@.[l]&\bar{S}^{N-1}_\mathbb R\ar@.[u]\ar@.[l]}$$

Here all the maps are inclusions. The spheres in \cite{bgo} are those at bottom left, their complex analogues are on top left, and the whole right part of the diagram appears from the left part via twisting, with the middle spheres being equal to their own twists.

We will prove then that the associated quantum isometry groups, taken in an affine real/complex sense, in the spirit of \cite{go2}, are as follows:
$$\xymatrix@R=17mm@C=17mm{
U_N\ar[r]&U_N^{**}\ar[r]&U_N^+&\bar{U}_N^{**}\ar@.[l]&\bar{U}_N\ar@.[l]\\
O_N\ar[r]\ar[u]&O_N^*\ar[r]\ar[u]&O_N^+\ar[u]&\bar{O}_N^*\ar@.[u]\ar@.[l]&\bar{O}_N\ar@.[u]\ar@.[l]}$$

We believe that our 10 spheres are ``smooth'' and ``Riemannian'', in some strong sense, which is yet to be determined. Some questions here, still open, were raised in \cite{bgo}.

At the axiomatic level, we will have results and conjectures stating that, under very strong axioms, our 10 spheres (or ``geometries'', in a large sense) are the only ones. Our axioms exclude however many interesting objects, like the half-liberated geometry $\mathbb C^N_*$ from \cite{bdd}. Our third contribution will be a proposal, in order to fix this problem. We will show that the 10-geometry formalism has a natural 18-geometry extension, as follows:
$$\xymatrix@R=5mm@C=8mm{
&&\mathbb C^N_\#\ar[dr]&&&&\bar{\mathbb C}^N_\#\ar@.[dl]&\\
&\mathbb C^N_\circ\ar[dr]\ar[ur]&&\mathbb C^N_*\ar[r]&\mathbb C^N_+&\bar{\mathbb C}^N_*\ar@.[l]&&\bar{\mathbb C}^N_\circ\ar@.[ul]\ar@.[dl]\\
\mathbb C^N_-\ar[ur]\ar[dr]&&\mathbb C^N_{**}\ar[ur]&&&&\bar{\mathbb C}^N_{**}\ar@.[ul]&&\bar{\mathbb C}^N_-\ar@.[ul]\ar@.[dl]\\
&\mathbb C^N\ar[ur]&&&&&&\bar{\mathbb C}^N\ar@.[ul]\\
\mathbb R^N\ar[uu]\ar[rr]&&\mathbb R^N_*\ar[rr]\ar[uu]&&\mathbb R^N_+\ar[uuu]&&\bar{\mathbb R}^N_*\ar@.[ll]\ar@.[uu]&&\bar{\mathbb R}^N\ar@.[ll]\ar@.[uu]}$$

Here the geometries $\mathbb C^N_-\to\mathbb C^N_\circ\to\mathbb C^N_\#$ and $\bar{\mathbb C}^N_-\to\bar{\mathbb C}^N_\circ\to\bar{\mathbb C}^N_\#$ are new, and appear when inserting the geometry $\mathbb C^N_*$ from \cite{bdd} and its twist into the 10-geometry framework. This extension, however, requires a lot of work, and we have only partial results here.

We refer to the body of the paper for the precise statements of our results, and to the final section below for a summary of questions raised by the present work.

The paper is organized as follows: in 1-2 we construct and axiomatize/classify the main 10 spheres, in 3-4 we study the corresponding quantum isometry groups, and in 5-6 we state and prove our main results, and we discuss the extended formalism.

\medskip

\noindent {\bf Formalism and notations.} We use the ``noncommutative compact space'' framework coming from operator algebras. More precisely, the category of noncommutative compact spaces is by definition the category of unital $C^*$-algebras, with the arrows reversed. 

According to the Gelfand theorem, the category of usual compact spaces embeds covariantly into the category of noncommutative compact spaces, via $X\to C(X)$, the image is formed by the spaces coming from the commutative $C^*$-algebras, and the inverse correspondence is obtained by taking the spectrum, $X=\{\chi:C(X)\to\mathbb C\}$.

We denote such noncommutative spaces by $X,Y,Z,\ldots$, with the corresponding $C^*$-algebras being denoted $C(X),C(Y),C(Z),\ldots$ A morphism $X\to Y$ is by definition injective if the corresponding morphism $C(Y)\to C(X)$ in surjective, and vice versa.

Most of our spaces will be of algebraic geometric nature, coming in series $\{X_N|N\in\mathbb N\}$, with each $C(X_N)$ having $N$ privileged generators $x_1,\ldots,x_N$ (the ``coordinates''), subject to certain uniform relations, not depending on $N$. We will often refer to $X_N$ as the ``specialization'' of the abstract object $X=(X_N)$, at a particular $N\in\mathbb N$. 

\medskip

\noindent {\bf Acknowledgements.} I would like to thank Julien Bichon, Uwe Franz, Adam Skalski, Georges Skandalis and Roland Speicher for useful discussions. This work was partly supported by the ``Harmonia'' NCN grant 2012/06/M/ST1/00169.
 
\section{Noncommutative spheres}

We are interested in the noncommutative, undeformed analogues of $\mathbb R^N,\mathbb C^N$. At the pure algebra level, of the corresponding $*$-algebras of polynomial functions, these analogues can be introduced by ``liberating'' and ``twisting'' the various commutativity relations $ab=ba$ appearing in the following $*$-algebra presentation results:
\begin{eqnarray*}
Pol(\mathbb R^N)&=&\left<x_1,\ldots,x_N\Big|x_i=x_i^*,x_ix_j=x_jx_i\right>\\
Pol(\mathbb C^N)&=&\left<z_1,\ldots,z_N\Big|z_iz_j=z_jz_i,z_iz_j^*=z_j^*z_j\right>
\end{eqnarray*}

However, if we want to have norms on our universal $*$-algebras, we must restrict attention to compact submanifolds $X\subset\mathbb R^N,Z\subset\mathbb C^N$. And, the most natural candidates for such submanifolds are the corresponding spheres, $S^{N-1}_\mathbb R\subset\mathbb R^N$ and $S^{N-1}_\mathbb C\subset\mathbb C^N$. 

Looking at spheres is in fact not very restrictive, because many interesting manifolds appear as $X\subset S^{N-1}_\mathbb R,Z\subset S^{N-1}_\mathbb C$. For instance, after a $1/\sqrt{N}$ rescaling of the coordinates, any compact Lie group appears as $G\subset U_N\subset S^{N^2-1}_\mathbb C$. In addition, many homogeneous spaces $G\to X$ appear as well naturally as submanifolds of spheres.

To summarize this discussion, we are interested in the noncommutative, undeformed analogues of $S^{N-1}_\mathbb R,S^{N-1}_\mathbb C$. Our starting point will be the following result:

\begin{proposition}
The algebras of continuous functions on $S^{N-1}_\mathbb R,S^{N-1}_\mathbb C$ are given by
\begin{eqnarray*}
C(S^{N-1}_\mathbb R)&=&C^*\left(x_1,\ldots,x_N\Big|x_i=x_i^*,x_ix_j=x_jx_i,\sum_ix_i^2=1\right)\\
C(S^{N-1}_\mathbb C)&=&C^*\left(z_1,\ldots,z_N\Big|z_iz_j=z_jz_i,z_iz_j^*=z_j^*z_i,\sum_iz_iz_i^*=1\right)
\end{eqnarray*}
where at right we have universal $C^*$-algebras.
\end{proposition}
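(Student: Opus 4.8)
The plan is to establish, in each case, that the universal $C^*$-algebra $A$ defined by the stated generators and relations is isomorphic to $C(S^{N-1}_{\mathbb R})$ (resp. $C(S^{N-1}_{\mathbb C})$) via the canonical map sending the abstract generators to the coordinate functions. The starting observation is that the defining relations are satisfied by the actual coordinate functions on the sphere: on $S^{N-1}_{\mathbb R}$ the functions $x_i$ are real-valued (hence self-adjoint), commute, and satisfy $\sum_i x_i^2 = 1$ by the very definition of the sphere; similarly on $S^{N-1}_{\mathbb C}$ the functions $z_i$ commute with each other and with their conjugates, and $\sum_i z_i \bar{z_i} = \sum_i |z_i|^2 = 1$. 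By the universal property of $A$, this gives a surjective $*$-homomorphism $\pi\colon A \to C(S^{N-1}_{\mathbb R})$ (resp. $C(S^{N-1}_{\mathbb C})$), surjective because the coordinates generate the algebra of continuous functions on the sphere (Stone--Weierstrass). The whole content is to show $\pi$ is injective.

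The key structural point is that $A$ is commutative: in the real case this is immediate from the relation $x_ix_j = x_jx_i$ together with self-adjointness of the generators; in the complex case the relations $z_iz_j=z_jz_i$ and $z_iz_j^* = z_j^*z_i$, together with their adjoints, say that the generators and their adjoints all mutually commute, so again $A$ is commutative. Moreover $A$ is unital. I would then check that $A$ is nonzero and that it admits a norm at all --- this is where requiring a compact submanifold matters. The relation $\sum_i x_ix_i^* = 1$ (writing $x_i$ for $z_i$ uniformly) forces $\|x_i\|^2 = \|x_ix_i^*\| \leq \|\sum_j x_jx_j^*\| = 1$ in any $C^*$-seminorm, so each generator is bounded; hence the universal $C^*$-seminorm is finite, $A$ is a genuine $C^*$-algebra, and it is nonzero since $C(S^{N-1}_{\mathbb R})$ (resp. $C(S^{N-1}_{\mathbb C})$) is a nonzero quotient.

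Now apply Gelfand duality: since $A$ is a commutative unital $C^*$-algebra, $A = C(X)$ where $X = \mathrm{Spec}(A)$ is the space of characters $\chi\colon A \to \mathbb C$. A character is determined by the scalars $\lambda_i = \chi(x_i)$, and these scalars must satisfy exactly the defining relations evaluated in $\mathbb C$: in the real case $\lambda_i \in \mathbb R$ and $\sum_i \lambda_i^2 = 1$, i.e. $(\lambda_1,\dots,\lambda_N) \in S^{N-1}_{\mathbb R}$; in the complex case $\sum_i |\lambda_i|^2 = 1$, i.e. $(\lambda_1,\dots,\lambda_N) \in S^{N-1}_{\mathbb C}$. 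Conversely every such point gives a character, by the universal property applied to the one-dimensional representation. This bijection is a homeomorphism for the weak-$*$ topology on one side and the subspace topology on the other (both are compact Hausdorff, and the map is continuous), so $X = S^{N-1}_{\mathbb R}$ (resp. $S^{N-1}_{\mathbb C}$), giving $A = C(S^{N-1}_{\mathbb R})$ (resp. $C(S^{N-1}_{\mathbb C})$), and one checks this identification is the inverse of $\pi$.

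The only genuine obstacle is the preliminary step of knowing that the universal $C^*$-seminorm is finite, so that $A$ is a bona fide $C^*$-algebra rather than a formal object --- and this is precisely handled by the boundedness estimate $\|x_i\| \leq 1$ coming from the sphere relation $\sum_i x_ix_i^* = 1$; everything else is a routine application of commutativity plus Gelfand duality. It is worth recording that this boundedness is exactly the reason, flagged in the discussion preceding the statement, why one passes from $\mathbb R^N, \mathbb C^N$ to the spheres before attempting the liberation and twisting constructions.
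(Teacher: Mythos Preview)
Your proof is correct and follows essentially the same route as the paper: commutativity of the universal algebra, Gelfand duality to realize it as $C(X)$, and identification of the character space with the sphere via the coordinate relations. Your write-up is simply more explicit, spelling out the boundedness estimate $\|x_i\|\leq 1$ (which the paper defers to the remark after Definition 1.2) and the Stone--Weierstrass step for surjectivity, whereas the paper compresses the whole argument into three sentences.
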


\begin{proof}
This is a well-known consequence of the Gelfand and Stone-Weierstrass theorems. Indeed, the univeral algebras on the right being commutative, they are of the form $C(X),C(Z)$. The coordinate functions $x_i,z_i$ provide us with embeddings $X\subset\mathbb R^N,Z\subset\mathbb C^N$, and then the quadratic conditions give $X=S^{N-1}_\mathbb R,Z=S^{N-1}_\mathbb C$, as claimed.
\end{proof}

The idea now is to replace the commutation relations $ab=ba$ between the standard coordinates by some well-chosen relations. A first choice is that of using the anticommutation relations $ab=-ba$. A second choice, coming from the easy quantum group philosophy \cite{bsp}, is that of using the half-commutation relations $abc=cba$. A third choice, coming from the general liberation philosophy in free probability \cite{bsp}, \cite{bpa}, \cite{nsp}, \cite{vdn}, and which is perhaps the most straightforward, is that of using no relations at all.

So, let us first construct the free analogues of $S^{N-1}_\mathbb R,S^{N-1}_\mathbb C$:

\begin{definition}
The free versions of $S^{N-1}_\mathbb R,S^{N-1}_\mathbb C$ are defined by
\begin{eqnarray*}
C(S^{N-1}_{\mathbb R,+})&=&C^*\left(x_1,\ldots,x_N\Big|x_i=x_i^*,\sum_ix_i^2=1\right)\\
C(S^{N-1}_{\mathbb C,+})&=&C^*\left(z_1,\ldots,z_N\Big|\sum_iz_iz_i^*=\sum_iz_i^*z_i=1\right)
\end{eqnarray*}
where at right we have universal $C^*$-algebras.
\end{definition}

Here the fact that the norms are bounded, and hence that the above universal algebras do exist, comes from the quadratic conditions, which give $||x_i||\leq1,||z_i||\leq1$. 

Observe that our definition of $S^{N-1}_{\mathbb C,+}$ involves both the equalities $\sum_iz_iz_i^*=1$ and $\sum_iz_i^*z_i=1$, instead of just a single one. There are several reasons for this choice: 
\begin{enumerate}
\item We would like, as in usual projective geometry, the matrix $p=(p_{ij})$ formed by the elements $p_{ij}=z_iz_j^*$ to satisfy $p=p^*=p^2,Tr(p)=1$. And, the verification of these conditions requires both $\sum_iz_iz_i^*=1$ and $\sum_iz_i^*z_i=1$.

\item We would like as well, once again in analogy with the classical case, the generators $z_i$ to satisfy same the algebraic relations as the variables $\gamma_i=u_{1i}$ over the quantum group $U_N^+$. And, these latter variables satisfy $\sum_i\gamma_i\gamma_i^*=\sum_i\gamma_i^*\gamma_i=1$.
\end{enumerate}

We will be back later on to these topics, with concrete results justifying our choice, and with some axiomatization results as well, once again relying on this choice.

Let us construct now the twisted versions of $S^{N-1}_\mathbb R,S^{N-1}_\mathbb C$. In our generators and relations framework, these two spheres are best introduced as follows:

\begin{definition}
The twisted versions of $S^{N-1}_\mathbb R,S^{N-1}_\mathbb C$ are defined by
\begin{eqnarray*}
C(\bar{S}^{N-1}_\mathbb R)&=&C(S^{N-1}_{\mathbb R,+})\Big/\left<ab=-ba,\forall a,b\in\{x_i\}\ {\rm distinct}\right>\\
C(\bar{S}^{N-1}_\mathbb C)&=&C(S^{N-1}_{\mathbb C,+})\Big/\left<\alpha\beta=-\beta\alpha,\forall a,b\in\{z_i\}\ {\rm distinct},\ \alpha\beta=\beta\alpha\ {\rm otherwise}\right>
\end{eqnarray*}
where we use the notations $\alpha=a,a^*$ and $\beta=b,b^*$.
\end{definition}

In other words, the defining relations for $\bar{S}^{N-1}_\mathbb R$ are $x_ix_j=-x_jx_i$ for any $i\neq j$, and those for $\bar{S}^{N-1}_\mathbb C$ are $z_iz_i^*=z_i^*z_i$ for any $i$, and $z_iz_j=-z_jz_i,z_iz_j^*=-z_j^*z_i$ for any $i\neq j$.

Regarding the free spheres in Definition 1.2, these cannot be twisted. This is well-known, and we will use the conventions $\bar{S}^{N-1}_{\mathbb R,+}=S^{N-1}_{\mathbb R,+}$, $\bar{S}^{N-1}_{\mathbb C,+}=S^{N-1}_{\mathbb C,+}$, where needed.

Let us discuss now the half-liberation operation. In the real case this is obtained by using the relations $abc=cba$. In the complex case there are several choices, as explained in \cite{bdd}, \cite{bdu}. We will use here the ``minimal'' choice, from \cite{bdu}. The other choices, including the ``maximal'' one from \cite{bdd}, will be discussed later on.

So, let us construct four more spheres, as follows:

\begin{definition}
The half-liberations of $S^{N-1}_\mathbb R,S^{N-1}_\mathbb C,\bar{S}^{N-1}_\mathbb R,\bar{S}^{N-1}_\mathbb C$ are defined by
\begin{eqnarray*}
C(S^{N-1}_{\mathbb R,*})&=&C(S^{N-1}_{\mathbb R,+})\Big/\left<abc=cba,\forall a,b,c\in\{x_i\}\right>\\
C(S^{N-1}_{\mathbb C,**})&=&C(S^{N-1}_{\mathbb C,+})\Big/\left<abc=cba,\forall a,b,c\in\{z_i,z_i^*\}\right>\\
C(\bar{S}^{N-1}_{\mathbb R,*})&=&C(S^{N-1}_{\mathbb R,+})\Big/\left<abc=-cba,\forall a,b,c\in\{x_i\}\ {\rm distinct},abc=cba\ {\rm otherwise}\right>\\
C(\bar{S}^{N-1}_{\mathbb C,**})&=&C(S^{N-1}_{\mathbb C,+})\Big/\left<\alpha\beta\gamma=-\gamma\beta\alpha,\forall a,b,c\in\{z_i\}\ {\rm distinct},\ \alpha\beta\gamma=\gamma\beta\alpha\ {\rm otherwise}\right>
\end{eqnarray*}
where we use the notations $\alpha=a,a^*$, $\beta=b,b^*$ and $\gamma=c,c^*$.
\end{definition}

We have so far $2+2+2+4=10$ spheres, and we will temporarily stop here, because we will see in the next section that, under strong axioms, these spheres are the only ones. We will be back to more complicated examples later on, in section 6 below. 

As a first result about these 10 spheres, we have:

\begin{proposition}
We have the following diagram,
$$\xymatrix@R=15mm@C=15mm{
S^{N-1}_\mathbb C\ar[r]&S^{N-1}_{\mathbb C,**}\ar[r]&S^{N-1}_{\mathbb C,+}&\bar{S}^{N-1}_{\mathbb C,**}\ar@.[l]&\bar{S}^{N-1}_\mathbb C\ar@.[l]\\
S^{N-1}_\mathbb R\ar[r]\ar[u]&S^{N-1}_{\mathbb R,*}\ar[r]\ar[u]&S^{N-1}_{\mathbb R,+}\ar[u]&\bar{S}^{N-1}_{\mathbb R,*}\ar@.[u]\ar@.[l]&\bar{S}^{N-1}_\mathbb R\ar@.[u]\ar@.[l]}$$
with all the maps being inclusions.
\end{proposition}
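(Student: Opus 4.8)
The statement asks for two things: that each arrow depicted in the diagram is an inclusion of noncommutative spaces (equivalently, a surjection of $C^*$-algebras at the level of coordinates), and implicitly that the diagram commutes. My plan is to treat the solid arrows and the dotted arrows separately, since the dotted arrows are obtained from the solid ones by the twisting operation.

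For the solid arrows, the key observation is that all ten spheres are defined as quotients of $C(S^{N-1}_{\mathbb R,+})$ or $C(S^{N-1}_{\mathbb C,+})$ by imposing additional relations among the standard coordinates, and these relation sets are nested. First I would note that $C(S^{N-1}_{\mathbb C,+})$ surjects onto $C(S^{N-1}_{\mathbb R,+})$: sending $z_i \mapsto x_i$ is legitimate because the $x_i$ are self-adjoint and satisfy $\sum_i x_i^2 = 1$, which forces both $\sum_i z_i z_i^* = 1$ and $\sum_i z_i^* z_i = 1$ in the image; this gives the vertical maps on the left half. Then, along the bottom row, $C(S^{N-1}_{\mathbb R,+}) \to C(S^{N-1}_{\mathbb R,*}) \to C(S^{N-1}_\mathbb R)$ are surjections because the defining relations get progressively stronger: the half-commutation relations $abc=cba$ are consequences of the full commutation relations $ab=ba$, and the free sphere has no relations at all. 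The same reasoning, with $z_i, z_i^*$ in place of $x_i$, handles the top row $C(S^{N-1}_{\mathbb C,+}) \to C(S^{N-1}_{\mathbb C,**}) \to C(S^{N-1}_\mathbb C)$, using that $z_iz_j = z_jz_i$ and $z_iz_j^* = z_j^*z_i$ imply $\alpha\beta\gamma = \gamma\beta\alpha$ for all choices. The remaining vertical maps $C(S^{N-1}_{\mathbb C,**}) \to C(S^{N-1}_{\mathbb R,*})$ and $C(S^{N-1}_{\mathbb C}) \to C(S^{N-1}_{\mathbb R})$ are obtained by the same substitution $z_i \mapsto x_i$, checking in each case that the complex half-commutation (resp. commutation) relations reduce to their real counterparts when all generators are self-adjoint and equal to the $x_i$.

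For the dotted arrows, the twisted spheres $\bar S^{N-1}_{\mathbb R}, \bar S^{N-1}_{\mathbb C}, \bar S^{N-1}_{\mathbb R,*}, \bar S^{N-1}_{\mathbb C,**}$ are again quotients of the free spheres, so the horizontal dotted maps into $S^{N-1}_{\mathbb R,+}$ and $S^{N-1}_{\mathbb C,+}$ follow from the fact that they are defined as such quotients, exactly as for the solid case. For the dotted arrows $\bar S^{N-1}_{\mathbb R} \to \bar S^{N-1}_{\mathbb R,*}$ and $\bar S^{N-1}_{\mathbb C} \to \bar S^{N-1}_{\mathbb C,**}$ one checks that the twisted anticommutation relations $ab = -ba$ (for distinct generators) imply the twisted half-commutation relations $abc = -cba$ (for distinct generators), and $abc = cba$ otherwise — this is a short sign-bookkeeping computation, analogous to the untwisted case. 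The vertical dotted arrows $\bar S^{N-1}_{\mathbb C} \to \bar S^{N-1}_{\mathbb R}$ and $\bar S^{N-1}_{\mathbb C,**} \to \bar S^{N-1}_{\mathbb R,*}$ come once more from $z_i \mapsto x_i$: under this substitution the complex twisting relation $z_iz_j^* = -z_j^*z_i$ becomes $x_ix_j = -x_jx_i$, and $z_iz_i^* = z_i^*z_i$ becomes trivial, so the image indeed satisfies the defining relations of $\bar S^{N-1}_{\mathbb R}$ (resp. of $\bar S^{N-1}_{\mathbb R,*}$). Commutativity of all squares and triangles is automatic, since every map is given on generators by either the identity $x_i \mapsto x_i$ (resp. $z_i \mapsto z_i$) or the substitution $z_i \mapsto x_i$, and these compose consistently.

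The only genuinely nontrivial point — and the one I would single out as the main obstacle — is verifying that the substitution $z_i \mapsto x_i$ is well-defined on the half-liberated and twisted-half-liberated levels, i.e. that the complex relation sets really do collapse onto the real ones rather than onto something strictly weaker; and dually, that imposing the real-type relations on top of the complex ones does not accidentally trivialize the algebra. Concretely one must confirm: (i) $\{abc = cba : a,b,c \in \{x_i\}\}$ is exactly what $\{abc=cba : a,b,c\in\{z_i,z_i^*\}\}$ becomes after $z_i = z_i^* = x_i$; and (ii) the analogous statement with the signs, for the barred spheres, where one has to be careful that a triple of generators with a repeated index gets the $+$ sign while a triple of pairwise-distinct indices gets the $-$ sign, and that this is preserved under the substitution. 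None of this is deep, but it is the place where an error could hide, so I would write it out carefully. Everything else is a direct application of the universal property of the defining $C^*$-algebras together with Proposition 1.1 and Definitions 1.2--1.5.
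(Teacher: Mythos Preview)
Your proposal is correct and follows the same route as the paper: all inclusions come from the nesting of the defining relations, with the only nontrivial verification being that anticommutation implies twisted half-commutation for $\bar{S}^{N-1}_\mathbb R\subset\bar{S}^{N-1}_{\mathbb R,*}$ (and the complex analogue), which the paper carries out explicitly by the case analysis $abc=-bac=bca=-cba$ for $a,b,c$ distinct and a check of the three degenerate cases. Your emphasis is slightly inverted, however: the paper treats precisely this sign computation as \emph{the} substantive step, whereas the vertical $z_i\mapsto x_i$ maps you flag as the main obstacle are in fact immediate, since setting $z_i=z_i^*=x_i$ collapses each complex relation verbatim onto its real counterpart with no possibility of weakening or collapse.
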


\begin{proof}
In the untwisted case all the inclusions are clear from definitions. In the twisted case most of the inclusions are clear too, and we just have to check the two horizontal inclusions at right. Regarding the inclusion $\bar{S}^{N-1}_\mathbb R\subset\bar{S}^{N-1}_{\mathbb R,*}$, here the statement is that $ab=-ba$ for $a\neq b$ implies $abc=-cba$ for $a,b,c$ distinct, and $abc=cba$ otherwise. 

The first claim follows from $abc=-bac=bca=-cba$.

Regarding now the second claim, in the case $a=b=c$ we have $aaa=aaa$, in the case $a=b\neq c$ we have $aac=-aca=caa$, in the case $a=c\neq b$ we have $aba=aba$, and in the case $b=c\neq a$ we have $abb=-bab=bba$, and this finishes the proof.

Regarding the remaining inclusion, $\bar{S}^{N-1}_\mathbb C\subset\bar{S}^{N-1}_{\mathbb C,**}$, the proof here is similar, by replacing $a,b,c$ with variables $\alpha,\beta,\gamma$, given by $\alpha=a,a^*$, $\beta=b,b^*$ and $\gamma=c,c^*$.
\end{proof}

We investigate now the properness of the inclusions in the above diagram. A simple criterion for comparing spheres is by looking at the classical versions. We have here:

\begin{proposition}
The classical versions of the $10$ spheres are
$$\xymatrix@R=15mm@C=15mm{
S^{N-1}_\mathbb C\ar[r]&S^{N-1}_\mathbb C\ar[r]&S^{N-1}_\mathbb C&S^{N-1,1}_\mathbb C\ar@.[l]&\mathbb T^{\oplus N}\ar@.[l]\\
S^{N-1}_\mathbb R\ar[r]\ar[u]&S^{N-1}_\mathbb R\ar[r]\ar[u]&S^{N-1}_\mathbb R\ar[u]&S^{N-1,1}_\mathbb R\ar@.[u]\ar@.[l]&\mathbb Z_2^{\oplus N}\ar@.[u]\ar@.[l]}$$
where $S^{N-1,1}_\mathbb K$ is a union of $\binom{N}{2}$ copies of $S^1_\mathbb K$, which is not smooth at $N\geq3$.
\end{proposition}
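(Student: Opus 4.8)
The plan is to compute the classical version of each of the 10 spheres, i.e.\ the spectrum of the abelianization of each $C^*$-algebra, by adding the relations $xy=yx$ (resp.\ $z$, $z^*$ pairwise commute) to the defining presentations and then identifying the resulting commutative $C^*$-algebra via Gelfand duality, exactly as in the proof of Proposition 1.1. For the five spheres in the left and middle columns ($S^{N-1}_\mathbb R$, $S^{N-1}_{\mathbb R,*}$, $S^{N-1}_{\mathbb R,+}$ and their complex analogues), the relations $abc=cba$ and ``no relations'' both become vacuous once commutativity is imposed, so the abelianization is simply $C(S^{N-1}_\mathbb R)$ (resp.\ $C(S^{N-1}_\mathbb C)$) in each case; this is immediate from Definitions 1.2 and 1.5 together with Proposition 1.1. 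So the content of the statement is entirely in the two right-hand columns, the twisted spheres.

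For the rightmost column, consider $\bar S^{N-1}_\mathbb R$: after abelianizing, the relations $x_ix_j=-x_jx_i$ for $i\neq j$ combine with $x_ix_j=x_jx_i$ to give $x_ix_j=0$ for all $i\neq j$, while $\sum_i x_i^2=1$ forces exactly one coordinate to be $\pm1$ and the rest to vanish. Hence the spectrum is $\{\pm e_1,\dots,\pm e_N\}=\mathbb Z_2^{\oplus N}$ (viewed as the $2N$ points of that shape), and the same argument with $\alpha,\beta$ ranging over $a,a^*$ handles $\bar S^{N-1}_\mathbb C$: the coordinates $z_i$ satisfy $z_iz_j=z_iz_j^*=0$ for $i\neq j$ and $z_iz_i^*=z_i^*z_i$, so exactly one $z_i$ is nonzero with $|z_i|=1$, giving $N$ copies of $\mathbb T$, i.e.\ $\mathbb T^{\oplus N}$. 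For the middle-right column, consider $\bar S^{N-1}_{\mathbb R,*}$: after abelianizing, the relation $abc=-cba$ for $a,b,c$ distinct becomes $abc=-abc$, hence $abc=0$ whenever $a,b,c\in\{x_i\}$ are pairwise distinct; the ``otherwise'' relations are automatic. So the vanishing of all squarefree degree-$3$ monomials in distinct variables means at most two of the coordinates are nonzero, and $\sum_i x_i^2=1$ then says the point lies on one of the $\binom N2$ coordinate circles $\{x_i^2+x_j^2=1,\ x_k=0\ \forall k\neq i,j\}$, each a copy of $S^1_\mathbb R$. This is the space I will call $S^{N-1,1}_\mathbb R$: a union of $\binom N2$ copies of $S^1_\mathbb R$, glued at the $2N$ points where a coordinate axis meets two of these circles. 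The complex case $\bar S^{N-1}_{\mathbb C,**}$ is identical with $\alpha,\beta,\gamma$ ranging over $a,a^*$, yielding $\binom N2$ copies of $S^1_\mathbb C$.

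The only remaining point is the non-smoothness claim: at each of the $2N$ intersection points (e.g.\ $e_1$), for $N\geq 3$ there are $N-1\geq 2$ distinct circles through that point lying in distinct coordinate $2$-planes, so the local model is a transverse union of $\geq 2$ one-dimensional submanifolds, which is not a manifold near that point. For $N=2$ there is a single circle and the space is smooth, and for $N=1$ it degenerates. I do not expect a genuine obstacle here; the one place to be slightly careful is bookkeeping the ``otherwise'' clauses in Definition 1.5 to confirm they are automatically satisfied after abelianization (so that the abelianized algebra really is the full coordinate ring of the claimed variety and nothing smaller), and checking that the inclusions in the displayed diagram are the abelianizations of those in Proposition 1.6 — both of which are routine verifications of the type carried out in the proof of Proposition 1.6.
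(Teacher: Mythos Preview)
Your proposal is correct and follows essentially the same approach as the paper: both compute the classical version as the intersection with the commutative sphere (equivalently, the abelianization), observe that the anticommutation relations collapse to $ab=0$ (resp.\ $abc=0$) upon imposing commutativity, and read off the resulting loci as $\mathbb Z_2^{\oplus N}$, $\mathbb T^{\oplus N}$, and the union of $\binom{N}{2}$ coordinate circles. Your write-up is in fact more detailed than the paper's on the non-smoothness point (the paper simply declares it ``clear''); note only that your cross-references are off by one --- the inclusion diagram is Proposition~1.5 and the half-liberations are Definition~1.4.
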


\begin{proof}
The assertions for the untwisted spheres are clear by definition.

Observe that we have $S^{N-1}_\mathbb C\cap\bar{S}^{N-1}_\mathbb C=\mathbb T^{\oplus N}$, because the relations for $\bar{S}^{N-1}_\mathbb C$, applied to the points $z\in S^{N-1}_\mathbb C$, read $ab=0$, for any $a,b\in\{z_i\}$ distinct. We conclude that such points $z$ are those having all but one coordinates vanishing, $z\in\mathbb T^{\oplus N}$. 

By restricting now to the real case, we obtain $S^{N-1}_\mathbb R\cap\bar{S}^{N-1}_\mathbb R=\mathbb Z_2^{\oplus N}$ as well.

Regarding the intersections $S^{N-1,1}_\mathbb R=S^{N-1}_\mathbb R\cap\bar{S}^{N-1}_{\mathbb R,*}$ and $S^{N-1,1}_\mathbb C=S^{N-1}_\mathbb C\cap\bar{S}^{N-1}_{\mathbb C,**}$, observe that a point $z\in S^{N-1}_\mathbb K$ belongs to $S^{N-1,1}_\mathbb K$ precisely when its coordinates satisfy $z_iz_jz_l=0$, for any $i,j,l$ distinct. Thus $S^{N-1,1}_\mathbb K$ is the union of $\binom{N}{2}$ copies of $S^1_\mathbb K$, as claimed. 

Finally, the non-smoothness assertion is clear.
\end{proof}

Now back to the properness question, we have here:

\begin{theorem}
The inclusions in Proposition 1.5 are as follows:
\begin{enumerate}
\item At $N\geq3$, all these inclusions are proper.

\item At $N=2$ we have $S^1_{\mathbb R,*}=\bar{S}^1_{\mathbb R,*}=S^1_{\mathbb R,+}$, and the other inclusions are proper.
\end{enumerate}
\end{theorem}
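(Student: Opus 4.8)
The strategy is to separate the two regimes cleanly. For part (1), at $N\geq 3$, I would prove properness by exhibiting, for each inclusion in the diagram, either a difference already visible at the classical level (via Proposition 1.6) or a genuinely noncommutative witness. The classical picture does most of the work on the left half: since $S^{N-1}_\mathbb R$, $S^{N-1}_{\mathbb R,*}$, $S^{N-1}_{\mathbb R,+}$ all have the same classical version, properness of $S^{N-1}_\mathbb R\subset S^{N-1}_{\mathbb R,*}\subset S^{N-1}_{\mathbb R,+}$ cannot be seen classically and must be established by a representation-theoretic or quantum-group argument — the natural route is to note that the quantum isometry groups $O_N\subset O_N^*\subset O_N^+$ are known to be proper inclusions at $N\geq 3$ (from \cite{bsp}, \cite{bve}), and that a collapse of two spheres would force a collapse of the associated quantum isometry groups; alternatively, one constructs explicit matrix models (e.g. $2\times 2$ matrices of functions on a circle) where $abc=cba$ holds but $ab=ba$ fails, and where no relation holds but $abc=cba$ fails. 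On the twisted side, $\bar S^{N-1}_\mathbb R\subset\bar S^{N-1}_{\mathbb R,*}\subset S^{N-1}_{\mathbb R,+}$ is handled the same way, using $\bar O_N\subset\bar O_N^*\subset O_N^+$. The vertical real-to-complex inclusions are proper because, for instance, $z_i$ is not self-adjoint in the complex spheres — concretely the commutative quotients already differ ($S^{N-1}_\mathbb R\neq S^{N-1}_\mathbb C$), so those four vertical arrows on the left are proper by Proposition 1.6, and the same applies to the two vertical twisted arrows once one checks the classical version $\bar S^{N-1}_\mathbb R\neq\bar S^{N-1}_\mathbb C$, i.e. $\mathbb Z_2^{\oplus N}\neq\mathbb T^{\oplus N}$, and $S^{N-1,1}_\mathbb R\neq S^{N-1,1}_\mathbb C$.

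For the horizontal twisted inclusions at right, $\bar S^{N-1}_\mathbb R\subset\bar S^{N-1}_{\mathbb R,*}$ and $\bar S^{N-1}_{\mathbb R,*}\subset S^{N-1}_{\mathbb R,+}$ (and their complex analogues), properness is again mostly classical: from Proposition 1.6, $\mathbb Z_2^{\oplus N}\subsetneq S^{N-1,1}_\mathbb R\subsetneq S^{N-1}_\mathbb R$ at $N\geq 3$ (the first inclusion is proper since $S^{N-1,1}_\mathbb R$ contains whole circles, the second since $S^{N-1,1}_\mathbb R$ is a proper algebraic subvariety — a point with three nonzero coordinates violates $z_iz_jz_l=0$), which forces the corresponding sphere inclusions to be proper; and $S^{N-1,1}_\mathbb R\subsetneq S^{N-1}_{\mathbb R,+}$ classically gives the rightmost one. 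The twist-versus-original inclusions $S^{N-1}_\mathbb R$ vs.\ $\bar S^{N-1}_\mathbb R$ etc.\ are not literally inclusions in the diagram, so nothing extra is needed there.

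For part (2), at $N=2$, the key computation is the identity $S^1_{\mathbb R,*}=\bar S^1_{\mathbb R,*}=S^1_{\mathbb R,+}$. Here I would argue directly with the two generators $x_1,x_2$ satisfying $x_i=x_i^*$ and $x_1^2+x_2^2=1$. The claim $S^1_{\mathbb R,*}=S^1_{\mathbb R,+}$ amounts to showing the half-commutation relations $abc=cba$ (for $a,b,c\in\{x_1,x_2\}$) are automatically satisfied in $C(S^1_{\mathbb R,+})$. The nontrivial cases are $x_1x_2x_1=x_1x_2x_1$ (trivial), $x_2x_1x_2=x_2x_1x_2$ (trivial), $x_1x_1x_2=x_2x_1x_1$, and $x_1x_2x_2=x_2x_2x_1$ — so the real content is $x_1^2x_2=x_2x_1^2$ and $x_1x_2^2=x_2^2x_1$, i.e.\ each $x_i$ commutes with $x_j^2$. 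This follows because $x_1^2=1-x_2^2$ commutes with $x_2$ trivially, and symmetrically. So indeed $S^1_{\mathbb R,*}=S^1_{\mathbb R,+}$. The same quadratic identities show $x_ix_jx_l=-x_lx_jx_i$ is automatic when all three are distinct (vacuous at $N=2$) and $x_ix_jx_l=x_lx_jx_i$ otherwise holds in $C(S^1_{\mathbb R,+})$, giving $\bar S^1_{\mathbb R,*}=S^1_{\mathbb R,+}$ as well. The remaining inclusions at $N=2$ — $S^1_\mathbb R\subset S^1_{\mathbb R,+}$, the complex ones, and the twisted ones $\bar S^1_\mathbb R\subset S^1_{\mathbb R,+}$, $\bar S^1_\mathbb C\subset S^1_{\mathbb C,+}$, $\bar S^1_{\mathbb C,**}\subset S^1_{\mathbb C,+}$ — are proper, which one checks classically via Proposition 1.6 ($S^1_\mathbb R\neq S^1_{\mathbb R,+}$ since the former is a curve and the latter is genuinely noncommutative — $C(S^1_{\mathbb R,+})$ being the universal $C^*$-algebra generated by two free self-adjoints summing in squares to $1$, which is noncommutative; $\mathbb Z_2^{\oplus 2}\neq S^1_\mathbb R$, etc.), together with the observation that $S^1_{\mathbb C,**}$ need not collapse — here one may invoke that the complex half-liberation at $N=2$ remains distinct from $S^1_{\mathbb C,+}$, which should follow from a dimension or abelianization count since $S^{1,1}_\mathbb C$ at $N=2$ is a single circle whereas $S^1_\mathbb C$ is also a circle but the noncommutative structures differ.

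**Main obstacle.** The genuinely hard point is the left-half properness at $N\geq 3$ that is invisible classically, namely $S^{N-1}_\mathbb R\subsetneq S^{N-1}_{\mathbb R,*}\subsetneq S^{N-1}_{\mathbb R,+}$ and the complex and twisted analogues. The clean way is to defer to the quantum isometry group computations of sections 3–5 (the inclusions $O_N\subset O_N^*\subset O_N^+$, $\bar O_N\subset\bar O_N^*$, $U_N\subset U_N^{**}\subset U_N^+$ being proper at $N\geq 3$, which is classical easy-quantum-group theory), but if one wants a self-contained argument at this stage of the paper, one must produce explicit finite-dimensional $*$-representations distinguishing the algebras — e.g.\ a model on $M_2(C(X))$ for a suitable space $X$ realizing half-commuting but non-commuting coordinates on the sphere, and an infinite-dimensional model (say on a Fock-type space) realizing the free sphere faithfully with non-half-commuting coordinates. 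I would choose to cite the quantum isometry group route to keep the argument short.
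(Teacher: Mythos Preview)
Your outline is largely sound, and your argument for the $N=2$ collapse $S^1_{\mathbb R,*}=\bar S^1_{\mathbb R,*}=S^1_{\mathbb R,+}$ via $x_1^2=1-x_2^2$ is exactly the paper's. Two points, however, deserve attention.

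\textbf{Different route for the untwisted horizontal inclusions.} For $S^{N-1}_\mathbb R\subsetneq S^{N-1}_{\mathbb R,*}\subsetneq S^{N-1}_{\mathbb R,+}$ (and the complex analogues) you propose to invoke the quantum isometry group computations of later sections. The implication ``spheres collapse $\Rightarrow$ quantum isometry groups collapse'' needs Theorem~5.7, which lies four sections ahead; this is not circular, but it is structurally backwards for a section-1 statement, and your fallback of ``explicit matrix models'' is left unspecified. The paper instead gives two short self-contained tricks. For the outer inclusions $\dot S^{N-1}_\mathbb K\subsetneq\dot S^{N-1}_{\mathbb K,*}$: take coordinates $z_i$ on $S^{N-1}_\mathbb C$ (resp.\ $\bar S^{N-1}_\mathbb C$) and set
\[
X_i=\begin{pmatrix}0&z_i\\ z_i^*&0\end{pmatrix};
\]
these are self-adjoint, satisfy $\sum_iX_i^2=1$, and half-commute (resp.\ half-anticommute) but do not commute (resp.\ anticommute). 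For the inner inclusion $S^{N-1}_{\mathbb R,*}\subsetneq S^{N-1}_{\mathbb R,+}$ at $N\geq3$: choose positive matrices $Y_1,Y_2,Y_3\in M_2(\mathbb C)$ with $\sum_iY_i=1$ that do not pairwise commute, and set $X_i=\sqrt{Y_i}$; then $x_i\mapsto X_i$ represents $C(S^2_{\mathbb R,+})$ with $x_i^2\mapsto Y_i$ noncommuting, whereas on $S^2_{\mathbb R,*}$ the relations $abc=cba$ force $a^2b^2=b^2a^2$. Your observation that on the twisted side the classical versions of Proposition~1.6 already separate $\bar S^{N-1}_\mathbb R,\bar S^{N-1}_{\mathbb R,*},S^{N-1}_{\mathbb R,+}$ at $N\geq3$ is correct, and the paper does use exactly this for $\bar S^2_{\mathbb R,*}\subsetneq S^2_{\mathbb R,+}$.

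\textbf{A genuine gap at $N=2$ in the complex middle.} Your proposed ``dimension or abelianization count'' for $S^1_{\mathbb C,**}\subsetneq S^1_{\mathbb C,+}$ and $\bar S^1_{\mathbb C,**}\subsetneq S^1_{\mathbb C,+}$ does not work. At $N=2$ the classical version of $\bar S^1_{\mathbb C,**}$ is $S^{1,1}_\mathbb C$, which by Proposition~1.6 is a union of $\binom{2}{2}=1$ copies of $S^1_\mathbb C$, i.e.\ $S^1_\mathbb C$ itself---identical to the classical versions of $S^1_{\mathbb C,**}$ and $S^1_{\mathbb C,+}$. So abelianization sees nothing here. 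The paper closes this with a free complexification: if $z,t$ are the coordinates on $S^1_\mathbb C$ and $u$ is a unitary free from them, then $Z=uz$, $T=ut$ satisfy $ZZ^*+TT^*=Z^*Z+T^*T=1$ but fail $Z^2T=\pm TZ^2$, hence represent $C(S^1_{\mathbb C,+})$ without factoring through either $C(S^1_{\mathbb C,**})$ or $C(\bar S^1_{\mathbb C,**})$.
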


\begin{proof}
We first discuss the general case, $N\geq2$. Here the 5 vertical inclusions are all proper, by Proposition 1.6. In order to check that the 4 horizontal inclusions at left and at right are proper, we can use a trick from \cite{bdu}. Consider indeed one of the spheres $S^{N-1}_\mathbb C/\bar{S}^{N-1}_\mathbb C$, with coordinates denoted $z_1,\ldots,z_N$, and let us set:
$$X_i=\begin{pmatrix}0&z_i\\ z_i^*&0\end{pmatrix}$$

These matrices are self-adjoint, they half-commute/half-anticommute, and their squares sum up to 1, so they produce a representation of  $S^{N-1}_{\mathbb R,*}/\bar{S}^{N-1}_{\mathbb R,*}$. Now since these matrices don't commute/anticommute, $S^{N-1}_\mathbb R\subset S^{N-1}_{\mathbb R,*}$ and $\bar{S}^{N-1}_\mathbb R\subset\bar{S}^{N-1}_{\mathbb R,*}$ are proper.

It follows that the inclusions $S^{N-1}_\mathbb C\subset S^{N-1}_{\mathbb C,**}$ and $\bar{S}^{N-1}_\mathbb C\subset\bar{S}^{N-1}_{\mathbb C,**}$ are proper as well, because these inclusions appear from the above ones, by intersecting with $S^{N-1}_{\mathbb R_+}$.

It remains to investigate the 4 horizontal inclusions in the middle:

(1) \underline{Case $N\geq 3$}. By intersecting everything with $S^2_{\mathbb R,+}$, it is enough to prove that the inclusions $S^2_{\mathbb R,*}\subset S^2_{\mathbb R,+}$ and $\bar{S}^2_{\mathbb R,*}\subset S^2_{\mathbb R,+}$ are proper. For $\bar{S}^2_{\mathbb R,*}\subset S^2_{\mathbb R,+}$, this follows from the fact that the inclusion of corresponding classical versions $S^{2,1}_\mathbb R\subset S^2_\mathbb R$ is proper.

For the inclusion $S^2_{\mathbb R,*}\subset S^2_{\mathbb R,+}$, we can use a trick from \cite{bgo}. Consider indeed the positive matrices in $M_2(\mathbb C)$, which are of the following form:
$$Y=\begin{pmatrix}r&z\\ \bar{z}&s\end{pmatrix}$$

Here $r,s\in\mathbb R$ and $z\in\mathbb C$ must be chosen such that both eigenvalues are positive, and this happens for instance when $r,s>0$ and $z\in\mathbb C$ is small enough.

Let us fix some numbers $r_i,s_i>0$ with $i=1,2,3$, satisfying $\sum_ir_i=\sum_is_i=1$. For any choice of small complex numbers $z_i\in\mathbb C$ satisfying $\sum_iz_i=0$, the corresponding elements $Y_i$ constructed as above will be positive, and will sum up to $1$. Moreover, by carefully choosing the $z_i$'s, we can arrange as for $Y_1,Y_2,Y_3$ not to pairwise commute.

Consider now the matrices $X_i=\sqrt{Y_i}$. These are all self-adjoint, and their squares sum up to 1, so we get a representation $C(S^2_{\mathbb R,+})\to M_2(\mathbb C)$ mapping $x_i\to X_i$. Observe that this representation maps $x_i^2\to Y_i$, and the elements $Y_i$ don't commute. 

Now since the relations $abc=cba$ imply $aabb=baab=bbaa$, the squares of the standard coordinates on $S^2_{\mathbb R,*}$ commute. We conclude that $S^2_{\mathbb R,*}\subset S^2_{\mathbb R,+}$ is indeed proper.

(2) \underline{Case $N=2$}. Here we must prove that, among the 4 horizontal inclusions in the middle, the two upper ones are proper, and the two lower ones are isomorphisms:
$$\xymatrix@R=15mm@C=15mm{
S^1_\mathbb C\ar[r]&S^1_{\mathbb C,**}\ar[r]&S^1_{\mathbb C,+}&\bar{S}^1_{\mathbb C,**}\ar@.[l]&\bar{S}^1_\mathbb C\ar@.[l]\\
S^1_\mathbb R\ar[r]\ar[u]&S^1_{\mathbb R,+}\ar[r]\ar[u]&S^1_{\mathbb R,+}\ar[u]&S^1_{\mathbb R,+}\ar@.[u]\ar@.[l]&\bar{S}^1_\mathbb R\ar@.[u]\ar@.[l]}$$

In order to prove $S^1_{\mathbb R,*}=\bar{S}^1_{\mathbb R,*}=S^1_{\mathbb R,+}$ observe that, since we have only two coordinates $x,y$, the half-commutation relations $abc=\pm bca$ reduce to the commutation relations $xy^2=y^2x,x^2y=yx^2$. But these relations hold over $S^1_{\mathbb R,+}$, because $x^2+y^2=1$.

It remains to prove that the inclusions $S^1_{\mathbb C,**}\subset S^1_{\mathbb C,+}$ and $\bar{S}^1_{\mathbb C,**}\subset S^1_{\mathbb C,+}$ are proper. In order to do so, we can use a free complexification trick, cf. \cite{ban}. Let indeed $z,t$ be the standard coordinates on $S^1_\mathbb C$, let $u$ be a unitary free from both $z,t$, and set $Z=uz,T=ut$. Then $ZZ^*+TT^*=Z^*Z+T^*T=1$, and since the relations $Z^2T=\pm TZ^2$ are not satisfied, we conclude that $S^1_{\mathbb C,**}\subset S^1_{\mathbb C,+}$ and $\bar{S}^1_{\mathbb C,**}\subset S^1_{\mathbb C,+}$ are both proper.
\end{proof}

Summarizing, we have constructed so far 10 basic examples of underformed noncommutative spheres. We will study in detail these spheres in sections 2-5 below, and we will come back to more complicated examples in section 6 below.

\section{Axiomatization, classification}

In this section we prove that, under a suitable axiomatization for the undeformed noncommutative spheres, the 10 spheres constructed above are the only ones.

Our axioms will be of course very strong. In order to introduce them, let us begin with some heuristics. The common features of our 10 spheres can be summarized as:

\begin{proposition}
The $10$ spheres appear from $S^{N-1}_{\mathbb C,+}$, by imposing relations of type
$$\alpha=\alpha^*,\qquad\alpha\beta=\beta\alpha,\qquad\alpha\beta=\pm\beta\alpha,\qquad\alpha\beta\gamma=\gamma\beta\alpha,\qquad\alpha\beta\gamma=\pm\gamma\beta\alpha$$
with $\alpha=a,a^*$, $\beta=b,b^*$, $\gamma=c,c^*$, and where the signs come from anticommutation.
\end{proposition}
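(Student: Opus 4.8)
The plan is to prove this by a direct case-by-case inspection of the ten spheres of Definitions 1.2--1.5, together with the presentations of $S^{N-1}_\mathbb R,S^{N-1}_\mathbb C$ recalled in Proposition 1.1. Each of the ten algebras is, by construction, a quotient of $C(S^{N-1}_{\mathbb C,+})$, so the only thing to do is to read off, in each case, the extra relations that are imposed, and to check that they belong to the five families listed in the statement. Before starting, I would fix once and for all the meaning of the sign rule: given generators with underlying indices $i,j$ (resp.\ $i,j,k$), the relation $\alpha\beta=\pm\beta\alpha$ (resp.\ $\alpha\beta\gamma=\pm\gamma\beta\alpha$) is assigned the sign $-$ exactly when these indices are pairwise distinct, and the sign $+$ otherwise --- this is what ``the signs come from anticommutation'' means, and it is set up precisely so as to reproduce the relations written out in Definitions 1.3 and 1.5.

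First I would dispose of the untwisted spheres. For $S^{N-1}_{\mathbb C,+}$ the family of imposed relations is empty. For $S^{N-1}_{\mathbb R,+}$ one imposes the reality relations $\alpha=\alpha^*$, that is $z_i=z_i^*$, which turn the equality $\sum_iz_iz_i^*=\sum_iz_i^*z_i=1$ into $\sum_ix_i^2=1$, giving back Definition 1.2. For $S^{N-1}_\mathbb C$ one imposes $\alpha\beta=\beta\alpha$ for all $\alpha\in\{z_i,z_i^*\}$, $\beta\in\{z_j,z_j^*\}$, which includes $z_iz_j=z_jz_i$ and $z_iz_j^*=z_j^*z_i$ and recovers Proposition 1.1; combining this with $\alpha=\alpha^*$ gives $S^{N-1}_\mathbb R$. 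For $S^{N-1}_{\mathbb C,**}$ one imposes $\alpha\beta\gamma=\gamma\beta\alpha$ for all $\alpha,\beta,\gamma\in\{z_i,z_i^*\}$, which is literally Definition 1.5, and adjoining $\alpha=\alpha^*$ to it produces $S^{N-1}_{\mathbb R,*}$, again as in Definition 1.5.

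Next I would handle the five twisted spheres using the sign rule. For $\bar{S}^{N-1}_\mathbb C$ one imposes $\alpha\beta=\pm\beta\alpha$ over all $\alpha\in\{z_i,z_i^*\}$, $\beta\in\{z_j,z_j^*\}$, the sign being $-$ for $i\neq j$ and $+$ for $i=j$; unpacking $\alpha=a,a^*$, $\beta=b,b^*$, this is exactly the list ``$\alpha\beta=-\beta\alpha$ for $a,b$ distinct, $\alpha\beta=\beta\alpha$ otherwise'' of Definition 1.3, and adjoining $\alpha=\alpha^*$ gives $\bar{S}^{N-1}_\mathbb R$. For $\bar{S}^{N-1}_{\mathbb C,**}$ one imposes $\alpha\beta\gamma=\pm\gamma\beta\alpha$ over all triples, the sign being $-$ exactly when the three underlying indices are pairwise distinct, which is Definition 1.5; adjoining $\alpha=\alpha^*$ gives $\bar{S}^{N-1}_{\mathbb R,*}$. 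Since, by the conventions fixed after Definition 1.3, the two free spheres coincide with their own twists, this exhausts the list of ten spheres and establishes the claim.

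The only point that is not purely formal --- and the one I would state carefully --- is the verification that imposing a relation ``for all $\alpha\in\{a,a^*\}$ and all $\beta\in\{b,b^*\}$'' really amounts to the shorter lists appearing in Definitions 1.3 and 1.5: among the four relations $z_iz_j=\pm z_jz_i$, $z_iz_j^*=\pm z_j^*z_i$, $z_i^*z_j=\pm z_jz_i^*$, $z_i^*z_j^*=\pm z_j^*z_i^*$ only two are independent, the other two being their adjoints, and similarly the eight cubic relations collapse to four. These reductions are immediate, so I do not expect any genuine obstacle; the proposition is in essence a repackaging of Definitions 1.2--1.5, meant to set the stage for the axiomatization that follows.
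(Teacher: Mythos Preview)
Your proposal is correct and follows essentially the same approach as the paper: a case-by-case inspection of Definitions 1.2--1.5. The paper's own proof is a single sentence (``This is clear from the definition of the 10 spheres in section 1 above, with the sign claim coming from the computations in the proof of Proposition 1.5''), so your write-up is in fact considerably more detailed than what the paper provides. The one small difference in emphasis is that the paper justifies the phrase ``the signs come from anticommutation'' by pointing back to the explicit computations in the proof of Proposition 1.5, where it is checked that the length-two anticommutation relations $ab=-ba$ for $a\neq b$ force exactly the length-three signs $abc=-cba$ for $a,b,c$ distinct and $abc=cba$ otherwise; you instead declare the sign rule directly and match it against Definition 1.5, which amounts to the same thing.
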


\begin{proof}
This is clear from the definition of the 10 spheres in section 1 above, with the sign claim coming from the computations in the proof of Proposition 1.5.
\end{proof}

The point now is that the above 5 types of relations, all coming from certain permutations in $S_1,S_2,S_3$, can be represented by suitable diagrams, as follows:
$$\xymatrix@R=10mm@C=6mm{\circ\ar@{-}[d]\\\bullet}\quad\qquad
\xymatrix@R=10mm@C=6mm{\circ\ar@{-}[dr]&\circ\ar@{-}[dl]\\\circ&\circ}\quad\qquad
\xymatrix@R=10mm@C=6mm{\circ\ar@{.}[dr]&\circ\ar@{.}[dl]\\\circ&\circ}\quad\qquad
\xymatrix@R=10mm@C=5mm{\circ\ar@{-}[drr]&\circ\ar@{-}[d]&\circ\ar@{-}[dll]\\\circ&\circ&\circ}\quad\qquad
\xymatrix@R=10mm@C=5mm{\circ\ar@{.}[drr]&\circ\ar@{.}[d]&\circ\ar@{.}[dll]\\\circ&\circ&\circ}$$

More precisely, associated to such a diagram is the relation obtained by putting coordinates on the legs, such as each string joins equal coordinates, and then by stating that the product on top equals the product on the bottom. And this, with the convention that the empty/full circles represent symbols of type $\alpha=a,a^*$, and their conjugates, and that the dotted diagrams bring $\pm$ signs, coming from anticommutation.

In short, we can develop a diagrammatic approach to the axiomatization problem. Before doing so, however, there are two important remarks to be made:

{\bf I}. We know from Proposition 1.6 that non-smooth manifolds can appear when intersecting twisted and untwisted spheres, and more specifically that $S^{N-1}_\mathbb C\cap\bar{S}^{N-1}_{\mathbb C,**}$ is not smooth. Thus, we do not want to mix usual diagrams with dotted diagrams:
$$\xymatrix@R=10mm@C=6mm{\circ\ar@{-}[dr]&\circ\ar@{-}[dl]\\\circ&\circ}\qquad\xymatrix@R=4mm@C=6mm{&\\ +\\&\\& }\ \xymatrix@R=10mm@C=5mm{\circ\ar@{.}[drr]&\circ\ar@{.}[d]&\circ\ar@{.}[dll]\\\circ&\circ&\circ}\qquad\xymatrix@R=4mm@C=6mm{&\\ \implies\emptyset\\&\\& }$$
\vskip-5mm
{\bf II.} We do not want to mix either the real and complex cases. Indeed, this would amount in labelling ``black and white'' all the legs of our diagrams, and the problem is that this would produce many spheres, some of which are pathological. As an example, consider the ``sphere'' obtained from $S^{N-1}_{\mathbb C,+}$ by assuming that the coordinates $z_1,\ldots,z_N$ satisfy $ab=ba$. We would like later on this sphere to have a geometry, and a quantum isometry group. But, at the quantum group level, by using the formalism in \cite{bsp}:
$$\xymatrix@R=10mm@C=5mm{\bullet\ar@{-}[d]\ar@/^/@{-}[r]&\circ\ar@{-}[dr]&\circ\ar@{-}[dl]&\bullet\ar@{-}[d]\\\bullet&\circ&\circ\ar@/_/@{-}[r]&\bullet}\quad\xymatrix@R=4mm@C=6mm{&\\ =\\&\\& }\xymatrix@R=10mm@C=6mm{\circ\ar@{-}[dr]&\bullet\ar@{-}[dl]\\\bullet&\circ}$$
\vskip-5mm
In other words, for a unitary quantum group the relations $ab=ba$ between the standard coordinates imply the relations $ab^*=b^*a$, and so the quantum group is classical. Thus, the above ``sphere'', while being bigger than $S^{N-1}_\mathbb C$, would have the same quantum isometry group as $S^{N-1}_\mathbb C$. And this is a pathology, and so this sphere must be excluded.

Summarizing, we have to discuss separately the cases $\mathbb R,\mathbb C,\bar{\mathbb R},\bar{\mathbb C}$. Let us begin with:

\begin{definition}
Let $\dot{\mathbb K}=\mathbb R,\mathbb C,\bar{\mathbb R},\bar{\mathbb C}$ be one of the fields $\mathbb R,\mathbb C$, with the bar standing for the fact that the associated sphere is by definition the twisted one.
\begin{enumerate}
\item A monomial relation over $\dot{\mathbb K}$ is a formula of type $a_{i_1}\ldots a_{i_k}=\pm a_{i_\sigma(1)}\ldots a_{i_\sigma(k)}$, where $\sigma\in S_k$ is a permutation, and where the $\pm$ sign is the one making the formula $z_{i_1}\ldots z_{i_k}=\pm z_{i_\sigma(1)}\ldots z_{i_\sigma(k)}$ hold, over the sphere $\dot{S}^{N-1}_\mathbb K$.

\item A monomial sphere over $\dot{\mathbb K}$ is a quantum subspace $S\subset S^{N-1}_{\mathbb K,+}$ defined via a formula of type $C(S)=C(S^{N-1}_{\mathbb K,+})/<R>$, where $R$ comes from a set of monomial relations, each applied to all the variables $\gamma_i=x_i$ at $\mathbb K=\mathbb R$, and $\gamma_i=z_i,z_i^*$ at $\mathbb K=\mathbb C$.
\end{enumerate}
\end{definition}

Observe that our 10 spheres are all monomial, coming from the relations $ab=\pm ba$ and $abc=\pm cba$, corresponding to the permutations $(21)\in S_2$ and $(321)\in S_3$. Here, and in what follows, we use the permutation convention $\sigma=(\sigma(1)\ldots \sigma(k))$.

We agree to represent all permutations by diagrams, acting by definition downwards. As an example, the permutations $(21)\in S_2$ and $(321)\in S_3$ are represented as follows:
$$\xymatrix@R=10mm@C=6mm{\circ\ar@{-}[dr]&\circ\ar@{-}[dl]\\\circ&\circ}\qquad\qquad
\xymatrix@R=10mm@C=5mm{\circ\ar@{-}[drr]&\circ\ar@{-}[d]&\circ\ar@{-}[dll]\\\circ&\circ&\circ}$$

Observe that each monomial sphere over $\dot{\mathbb K}$ contains the sphere $\dot{S}^{N-1}_\mathbb K$, because each monomial relation is satisfied by definition by the standard coordinates of $\dot{S}^{N-1}_\mathbb K$.

The monomial spheres are best parametrized by groups, as follows:

\begin{proposition}
Given a set of permutations $E\subset S_\infty$, denote by $\dot{S}^{N-1}_{\mathbb K,E}$ the associated monomial sphere over $\dot{\mathbb K}$, with the relations $R$ coming from the elements $\sigma\in E$. Then any monomial sphere is of the form $\dot{S}^{N-1}_{\mathbb K,G}$, with $G\subset S_\infty$ being a group.
\end{proposition}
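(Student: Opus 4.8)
The plan is to take $G$ to be the \emph{maximal} set of permutations compatible with the given monomial sphere, and then to check that this $G$ is automatically a group which re-presents the same sphere. Concretely, given a monomial sphere $S\subset S^{N-1}_{\mathbb K,+}$, write it as $S=\dot{S}^{N-1}_{\mathbb K,E}$, and let $G\subset S_\infty$ be the set of all permutations $\sigma$ (in any $S_k$, viewed inside $S_\infty$ via the standard embeddings $S_k\hookrightarrow S_{k+1}$) such that the monomial relation $r_\sigma$ associated to $\sigma$ holds over $S$, i.e. $a_{i_1}\cdots a_{i_k}=\varepsilon(\sigma,i)\,a_{i_{\sigma(1)}}\cdots a_{i_{\sigma(k)}}$ in $C(S)$ for every index tuple $i=(i_1,\dots,i_k)$, applied to all the variables $\gamma_i$ as in the above definition of monomial spheres. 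Here the sign $\varepsilon(\sigma,i)\in\{\pm1\}$ is well-defined, and is in fact uniquely determined already in $C(\dot{S}^{N-1}_{\mathbb K})$, since the twisted spheres are $2$-cocycle twists of the classical ones, so the monomials involved are nonzero there and any two orderings of a given monomial differ by a definite sign. Since $E\subset G$ by construction, it suffices to show that $S=\dot{S}^{N-1}_{\mathbb K,G}$, and that $G$ is a group.

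For the first point, $E\subset G$ yields a surjection $C(\dot{S}^{N-1}_{\mathbb K,E})\to C(\dot{S}^{N-1}_{\mathbb K,G})$, hence an inclusion $\dot{S}^{N-1}_{\mathbb K,G}\subset S$; conversely, by the very definition of $G$ every defining relation of $\dot{S}^{N-1}_{\mathbb K,G}$ already holds in $C(S)$, so the closed ideal cutting out $\dot{S}^{N-1}_{\mathbb K,G}$ inside $C(S^{N-1}_{\mathbb K,+})$ is contained in the one cutting out $S$, which gives the reverse inclusion. Hence $\dot{S}^{N-1}_{\mathbb K,G}=S$.

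For the group property there are four checks. The identity permutation of each $S_k$ gives the trivial relation, so $1\in G$. Next, $G$ is stable under the embeddings $S_k\hookrightarrow S_{k+1}$: multiplying $r_\sigma$ on the right by a further generator $a_{i_{k+1}}$ produces exactly the relation $r_{\tilde\sigma}$ for the extended tuple, with the same sign, so this makes $G$ a genuine subset of $S_\infty$. For inverses, the substitution $j_p=i_{\sigma(p)}$ turns the relation $r_\sigma$ into the relation $r_{\sigma^{-1}}$ (again with a matching sign, by uniqueness of the sign in $C(\dot{S}^{N-1}_{\mathbb K})$). For products, one substitutes one relation into another: viewing $\sigma,\tau\in G$ inside a common $S_k$ and applying $r_\sigma$ followed by $r_\tau$ to the reindexed tuple rewrites $a_{i_1}\cdots a_{i_k}$ as $\varepsilon_1\varepsilon_2\,a_{i_{(\sigma\tau)(1)}}\cdots a_{i_{(\sigma\tau)(k)}}$ in $C(S)$; this identity is in fact valid already in $C(\dot{S}^{N-1}_{\mathbb K})$ (the monomial relations all hold on the unliberated sphere), where the sign is unique, so $\varepsilon_1\varepsilon_2=\varepsilon(\sigma\tau,i)$, and thus the derived relation is precisely $r_{\sigma\tau}$, giving $\sigma\tau\in G$. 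Hence $G\subset S_\infty$ is a subgroup.

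The only delicate part is this sign bookkeeping in the twisted cases $\dot{\mathbb K}=\bar{\mathbb R},\bar{\mathbb C}$: one needs that the signs $\varepsilon(\sigma,i)$ are well-defined and combine multiplicatively when permutations are composed, which is exactly the statement that $C(\dot{S}^{N-1}_{\mathbb K})$ is obtained from the commutative sphere by a $\pm1$-valued cocycle twist (so that rearranging a monomial is governed by a $2$-cocycle). In the untwisted cases $\dot{\mathbb K}=\mathbb R,\mathbb C$ all signs equal $+1$ and the argument reduces to pure index manipulation.
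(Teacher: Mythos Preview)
Your proof is correct and follows the same approach as the paper: define $G$ to be the set of all permutations whose associated monomial relations hold in the given sphere, then check that $G$ contains the identity and is closed under inversion and composition. The paper's argument is just the one-line observation that $X=Y$ and $Y=Z$ imply $X=Z$ (composition), $X=Y$ implies $Y=X$ (inverses), and $X=X$ (identity); you spell out the same steps with more care, in particular tracking the signs in the twisted cases and noting that they are well-defined because the twisted spheres are cocycle twists of the classical ones (so the monomials are nonzero and the sign is uniquely determined). This extra bookkeeping is a genuine improvement in rigor over the paper's terse proof, which writes the relations without signs, but the underlying idea is identical.
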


\begin{proof}
Consider indeed the set $G\subset S_\infty$ consisting of elements $\sigma\in S_\infty$ such that the relations $a_{i_1}\ldots a_{i_k}=a_{i_\sigma(1)}\ldots a_{i_\sigma(k)}$ hold, in our monomial sphere. 

It is clear then that $G$ is stable under composition, because $X=Y,Y=Z$ implies $X=Z$. Also clear is the fact that $G$ is stable under inversion, because $X=Y$ implies $Y=X$, and the fact that $G$ contains the unit permutation. Thus, $G$ is indeed a group.
\end{proof}

As an illustration for this result, by using the convention $*=**$, in order to denote the half-liberation operation by $*$ in both the real and complex cases, we have:

\begin{proposition}
The basic spheres $\dot{S}^{N-1}_\mathbb K\subset\dot{S}^{N-1}_{\mathbb K,*}\subset S^{N-1}_{\mathbb K,+}$ come from the groups 
$$S_\infty\supset S_\infty^*\supset\{1\}$$
where $S_\infty^*=(S_n^*)_{n\geq1}$ is such that $S_{2n}^*\simeq S_n\times S_n$, $S^*_{2n+1}\simeq S_n\times S_{n+1}$.
\end{proposition}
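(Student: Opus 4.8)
The plan is to read off, in each of the three cases, the parametrizing group from the description in the proof of Proposition 2.4: for a monomial sphere $S$, the group $G$ consists of those $\sigma$ for which the monomial relation attached to $\sigma$ by Definition 2.3, with its prescribed signs, actually holds in $C(S)$; and, by a standard cancellation argument over the sphere (multiply on the right by one more coordinate and sum over it, using the sphere relation), this group is graded, $G=\bigcup_k(G\cap S_k)$, with $G\cap S_{k-1}$ determined by $G\cap S_k$. Two of the three identifications are then essentially immediate. For the classical sphere $\dot S^{N-1}_\mathbb K$ (and, with the prescribed signs, its twist) the coordinate algebra is commutative up to sign, so the relation attached to every $\sigma$ holds, whence $G=S_\infty$. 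For the free sphere $S^{N-1}_{\mathbb K,+}$ one must show the converse, that no $\sigma\neq1$ yields a valid relation; for $N$ large this follows by exhibiting, for each $\sigma\in S_k\setminus\{1\}$, a matrix model of $C(S^{N-1}_{\mathbb K,+})$ together with pairwise distinct indices $i_1,\dots,i_k$ for which $z_{i_1}\cdots z_{i_k}\neq z_{i_{\sigma(1)}}\cdots z_{i_{\sigma(k)}}$ — equivalently, realizing the coordinates as the first row of the fundamental corepresentation of $O_N^+$, resp.\ $U_N^+$, whose Tannakian category has no crossings — so that $G=\{1\}$.

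The substantive case is the half-liberated sphere $\dot S^{N-1}_{\mathbb K,*}$, defined by imposing only the relation indexed by the permutation $(321)$, i.e.\ $abc=\pm cba$. First I would show $S_\infty^*\subseteq G$. In any word one may interchange the letters in positions $i$ and $i+2$, so all transpositions $(i,i+2)$ lie in $G$; these generate the subgroup of $S_k$ preserving the partition of $\{1,\dots,k\}$ into odd and even positions, and since $G$ is a group this subgroup $S_k^*$ is contained in $G\cap S_k$. Counting odd versus even positions then gives $S_{2n}^*\simeq S_n\times S_n$ and $S_{2n+1}^*\simeq S_n\times S_{n+1}$, as claimed.

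For the reverse inclusion $G\cap S_k\subseteq S_k^*$ I would use the doubling representation already exploited in the proof of Theorem 1.7: in the real case send $x_i\mapsto X_i=\begin{pmatrix}0&w_i\\ \bar w_i&0\end{pmatrix}$, where $(w_i)$ are the standard coordinates of $S^{N-1}_\mathbb C$ (in the complex case use the analogous $2\times2$ model over two copies of $S^{N-1}_\mathbb C$, and over $\bar S^{N-1}_\mathbb C$, with the anticommutation signs carried along, for the twisted spheres). These matrices are self-adjoint, satisfy $\sum X_i^2=1$, and half-commute, so they define a representation of $C(\dot S^{N-1}_{\mathbb K,*})$; moreover the nonzero entry of $X_{i_1}\cdots X_{i_k}$ is the commuting monomial picking up $w_{i_j}$ from every odd position $j$ and $\bar w_{i_j}$ from every even position $j$. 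Taking $i_1,\dots,i_k$ pairwise distinct — which needs $N\geq k$, the source of the ``$N$ large'' restriction — this monomial is squarefree and no index appears both barred and unbarred, hence it is a basis monomial of $Pol(S^{N-1}_\mathbb C)$; equality of the two monomials coming from the relation of $\sigma$ then forces $\sigma$ to map the set of odd positions onto itself, i.e.\ $\sigma\in S_k^*$. Combined with the previous step this gives $G\cap S_k=S_k^*$ and hence $G=S_\infty^*$.

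The main obstacle is precisely this last step: one must be sure the (rather small) doubling model sees \emph{every} non-parity-preserving permutation, which is exactly what the squarefree-monomial bookkeeping delivers. A related subtlety is the behaviour at small $N$: when $N\leq2$ the defining relations degenerate (for instance $S^1_{\mathbb R,*}=S^1_{\mathbb R,+}$ by Theorem 1.7), so the clean identifications $G=\{1\}$ for the free sphere and $G=S_\infty^*$ for the half-liberated sphere are statements in the stable range of $N$, the degenerate cases being recorded separately as in Theorem 1.7.
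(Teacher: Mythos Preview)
Your argument is correct, but you are proving more than the proposition requires, and consequently working harder than the paper does. The assertion that a sphere ``comes from'' a group $G$ means here only that $\dot{S}^{N-1}_{\mathbb K,G}$ (in the notation of Proposition 2.3) coincides with the given sphere; it does not assert that $G$ is the \emph{maximal} group of permutations whose relations hold. Thus for the free sphere the paper simply observes that imposing the empty set of relations yields $S^{N-1}_{\mathbb K,+}$, and for the half-liberated sphere it suffices to check that the relations indexed by $S_\infty^*$ and the single relation $abc=cba$ generate the same ideal: one direction is immediate since $(321)\in S_\infty^*$, and the other (that $abc=cba$ implies every parity-preserving permutation relation) is quoted from \cite{bve}, being exactly your observation that the transpositions $(i,i+2)$ generate $S_k^*$. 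The identification $S_{2n}^*\simeq S_n\times S_n$, $S_{2n+1}^*\simeq S_n\times S_{n+1}$ is then read off from the description of $S_\infty^*$ as the permutations preserving the odd/even partition.

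What you establish in addition, via the doubling model, is that for $N$ large no permutation outside $S_\infty^*$ (respectively outside $\{1\}$) yields a valid relation --- i.e.\ that these groups are maximal. This is a genuine strengthening, and your squarefree-monomial bookkeeping is the right way to see it; but it is not needed for the proposition as stated, and it is precisely this extra content that forces the ``$N$ large'' caveat you flag at the end. The paper's weaker reading holds for all $N$ without qualification.
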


\begin{proof}
The assertions regarding $\dot{S}^{N-1}_\mathbb K,S^{N-1}_{\mathbb K,+}$ are trivial. Regarding now $\dot{S}^{N-1}_{\mathbb K,*}$, the result being insensitive to the value of $\dot{\mathbb K}$, we can assume that we are dealing with $S^{N-1}_{\mathbb R,*}$.

We use the fact, from \cite{bve}, that the relations $abc=cba$ imply the relations of type $a_{i_1}\ldots a_{i_k}=a_{i_\sigma(1)}\ldots a_{i_\sigma(k)}$, for any $\sigma\in S_k$ having the property that when labelling cyclically the legs $\bullet\circ\bullet\circ\ldots$, each string joins a black leg to a white leg. In addition, these relations imply the original relations $abc=cba$, because the permutation $(321)\in S_3$ implementing these relations has indeed the ``black-to-white'' joining property:
$$\xymatrix@R=10mm@C=5mm{\circ\ar@{-}[drr]&\bullet\ar@{-}[d]&\circ\ar@{-}[dll]\\\bullet&\circ&\bullet}$$

We conclude that $S^{N-1}_{\mathbb R,*}$ comes from the group $S_\infty^*$ consisting of permutations $\sigma\in S_\infty$ having the black-to-white joining property. Now observe that $S_3^*,S_4^*$ are given by:
$$\xymatrix@R=10mm@C=5mm{\circ\ar@{-}[d]&\bullet\ar@{.}[d]&\circ\ar@{-}[d]\\\bullet&\circ&\bullet}\qquad\qquad 
\xymatrix@R=10mm@C=5mm{\circ\ar@{-}[drr]&\bullet\ar@{.}[d]&\circ\ar@{-}[dll]\\\bullet&\circ&\bullet}$$ 
$$\xymatrix@R=10mm@C=3mm{\circ\ar@{-}[d]&\bullet\ar@{.}[d]&\circ\ar@{-}[d]&\bullet\ar@{.}[d]\\\bullet&\circ&\bullet&\circ}\quad\qquad
\xymatrix@R=10mm@C=3mm{\circ\ar@{-}[drr]&\bullet\ar@{.}[d]&\circ\ar@{-}[dll]&\bullet\ar@{.}[d]\\\bullet&\circ&\bullet&\circ}\quad\qquad
\xymatrix@R=10mm@C=3mm{\circ\ar@{-}[drr]&\bullet\ar@{.}[drr]&\circ\ar@{-}[dll]&\bullet\ar@{.}[dll]\\\bullet&\circ&\bullet&\circ}\quad\qquad
\xymatrix@R=10mm@C=3mm{\circ\ar@{-}[d]&\bullet\ar@{.}[drr]&\circ\ar@{-}[d]&\bullet\ar@{.}[dll]\\\bullet&\circ&\bullet&\circ}$$

Thus we have $S_3^*=S_1\times S_2$ and $S_4^*=S_2\times S_2$, with the first component of each product coming from dotted permutations, and with the second component coming from the solid line permutations. In the general case, the proof is similar.
\end{proof}

We call depth of a monomial sphere the smallest $k\in\mathbb N\cup\{\infty\}$ such that our sphere can be written as $\dot{S}^{N-1}_{\mathbb K,E}$, as in Proposition 2.3, with $E\subset S_k$. In other words, a monomial sphere is of depth $\leq k$ when the relations defining it come from permutations $\sigma\in S_k$.

With this convention, we have the following result:

\begin{theorem}
The $10$ fundamental spheres, which can be written as 
$$\xymatrix@R=15mm@C=15mm{
S^{N-1}_{\mathbb C,S_\infty}\ar[r]&S^{N-1}_{\mathbb C,S_\infty^*}\ar[r]&S^{N-1}_{\mathbb C,\{1\}}&\bar{S}^{N-1}_{\mathbb C,S_\infty^*}\ar@.[l]&\bar{S}^{N-1}_{\mathbb C,S_\infty}\ar@.[l]\\
S^{N-1}_{\mathbb R,S_\infty}\ar[r]\ar[u]&S^{N-1}_{\mathbb R,S_\infty^*}\ar[r]\ar[u]&S^{N-1}_{\mathbb R,\{1\}}\ar[u]&\bar{S}^{N-1}_{\mathbb R,S_\infty^*}\ar@.[u]\ar@.[l]&\bar{S}^{N-1}_{\mathbb R,S_\infty}\ar@.[u]\ar@.[l]}$$
are precisely the monomial spheres having depth $k\leq 3$.
\end{theorem}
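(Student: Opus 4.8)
The plan is to prove two set inclusions: that each of the $10$ spheres is a monomial sphere of depth $k\leq 3$, and, conversely, that every monomial sphere of depth $k\leq 3$ is one of the $10$. The first inclusion is immediate: by Definitions 1.2--1.5, each of our spheres is cut out from $S^{N-1}_{\mathbb K,+}$ by relations associated to the permutations $(21)\in S_2\subset S_3$ and $(321)\in S_3$, with the appropriate $\pm$ signs in the twisted cases, as recorded by Proposition 2.1 and by the computations in the proof of Proposition 1.5; hence each is monomial of depth $\leq 3$. Moreover the presentations as $S^{N-1}_{\mathbb K,\{1\}}$, $\dot S^{N-1}_{\mathbb K,S_\infty^*}$, $\dot S^{N-1}_{\mathbb K,S_\infty}$ are those of Propositions 2.4 and 2.5 (the twisted versions of Proposition 2.4 being proved the same way). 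All the work is in the converse.

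So fix one of the fields $\dot{\mathbb K}=\mathbb R,\mathbb C,\bar{\mathbb R},\bar{\mathbb C}$, and let $S$ be a monomial sphere over $\dot{\mathbb K}$ of depth $\leq 3$, written $S=\dot S^{N-1}_{\mathbb K,E}$ with $E\subseteq S_3$. By Proposition 2.3 we may replace $E$ by the group $G\subseteq S_\infty$ of all permutations whose associated (signed, in the $\dot{\mathbb K}$-sense) relations hold over $S$, so that $S=\dot S^{N-1}_{\mathbb K,G}$ and, by the remark following Definition 2.2, $\dot S^{N-1}_{\mathbb K}\subseteq S\subseteq S^{N-1}_{\mathbb K,+}$. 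I would then run through the subsets of $S_3$. Up to generating the same subgroup of $S_3$, the possibilities are: $E\subseteq\{\mathrm{id}\}$; $E$ generates $\{\mathrm{id},(321)\}=S_3^*$; or $E$ contains one of the adjacent transpositions $(213),(132)$ or one of the $3$-cycles $(231),(312)$, in which case $E$ in fact generates all of $S_3$ (for instance $(321)(231)$ is an adjacent transposition). In the first case $G=\{1\}$ and $S=S^{N-1}_{\mathbb K,+}$; in the second, Proposition 2.4 gives $G=S_\infty^*$ and $S=\dot S^{N-1}_{\mathbb K,*}$.

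The heart of the matter is to show that in the remaining case $G=S_\infty$, i.e. $S=\dot S^{N-1}_{\mathbb K}$. If $E$ contains an adjacent transposition this is easy: the relation $\gamma_{i}\gamma_{j}\gamma_{k}=\pm\gamma_{j}\gamma_{i}\gamma_{k}$ holds for \emph{all} indices, in particular for every value of the last one, so right-multiplying by the appropriate adjoint coordinate and summing over $k$ (using $\sum_l x_l^2=1$, resp. $\sum_l z_lz_l^*=\sum_l z_l^*z_l=1$) collapses it to $\gamma_i\gamma_j=\pm\gamma_j\gamma_i$, which is exactly the full (anti)commutation relation defining $\dot S^{N-1}_{\mathbb K}$; here $\gamma$ runs over $\{x_i\}$ in the real case and over $\{z_i,z_i^*\}$ in the complex case, and the signs are the $\dot{\mathbb K}$-canonical ones. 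If instead $E$ contains a $3$-cycle, say $(231)$, one first specializes $\gamma_i\gamma_j\gamma_k=\pm\gamma_j\gamma_k\gamma_i$ at pairs of coinciding indices to obtain, after rearranging, that each $\gamma_i$ commutes with every $\gamma_l^2$ (resp. with $z_lz_l^*+z_l^*z_l$), and that $\gamma_i[\gamma_i,\gamma_j]_{\pm}=[\gamma_i,\gamma_j]_{\pm}\gamma_i=0$, where $[\,,\,]_{\pm}$ denotes the commutator, or the anticommutator in the twisted case. A short computation using these identities then yields $[\gamma_i,\gamma_j]_{\pm}^{\,2}=0$; since $[\gamma_i,\gamma_j]_{\pm}$ is skew-adjoint, this forces $[\gamma_i,\gamma_j]_{\pm}=0$ in the $C^*$-algebra, and again $S=\dot S^{N-1}_{\mathbb K}$.

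Finally I would assemble the four cases $\dot{\mathbb K}=\mathbb R,\mathbb C,\bar{\mathbb R},\bar{\mathbb C}$: in each, the monomial spheres of depth $\leq 3$ are exactly those with $G\in\{\{1\},S_\infty^*,S_\infty\}$, namely $S^{N-1}_{\mathbb K,+}$, $\dot S^{N-1}_{\mathbb K,*}$ and $\dot S^{N-1}_{\mathbb K}$; since $\bar S^{N-1}_{\mathbb K,+}=S^{N-1}_{\mathbb K,+}$, this list contributes $5$ distinct real and $5$ distinct complex spheres, namely the $10$ of the statement, and they are pairwise distinct by Theorem 1.7, so nothing is missing and nothing is spurious. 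The main obstacle is the $3$-cycle subcase: one must hit on the right combination of the low-degree relations making the commutator square vanish — by contrast the adjacent-transposition subcase reduces in one line, and the $(321)$ subcase is Proposition 2.4 — and, as elsewhere in the paper, carry the bookkeeping of the $\pm$ signs and of the $z\leftrightarrow z^*$ substitutions correctly through the twisted and complex versions of all these manipulations.
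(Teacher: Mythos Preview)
Your approach is essentially the paper's: both reduce the uniqueness claim to showing that the $3$-cycle relation $abc=\pm cab$ forces $(ab\mp ba)^2=0$ and hence $ab=\pm ba$ in the $C^*$-algebra. You are in fact more explicit than the paper on the adjacent-transposition case, spelling out the ``multiply by the last coordinate and sum'' trick that the paper only invokes later, in the depth-$4$ analysis.

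Two small corrections. First, your parenthetical ``$E$ in fact generates all of $S_3$'' is false when $E$ consists of a single $3$-cycle (it then generates $A_3$), and the remark about $(321)(231)$ does not help since $(321)$ need not lie in $E$. This is harmless: your subsequent argument treats the adjacent-transposition and $3$-cycle subcases separately and covers every $E$ not contained in $\{\mathrm{id},(321)\}$, so the case split is complete regardless of which subgroup of $S_3$ is generated. Second, the claim that $[\gamma_i,\gamma_j]_\pm$ is skew-adjoint is only correct in the real cases, where the $\gamma$'s are self-adjoint; for $\gamma\in\{z_i,z_i^*\}$ this fails. The fix is easy: since the $3$-cycle relations are trilinear and hold for all of $z_i,z_i^*$, they hold for the self-adjoint elements $z_i+z_i^*$ and $i(z_i-z_i^*)$, so the real-case argument applies to these and gives the (anti)commutation of the $z_i$ and $z_i^*$. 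The paper glosses over exactly the same point with ``adding $*$ exponents where needed''.
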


\begin{proof}
The first assertion follows from Proposition 2.4. In order to prove the uniqueness, we have to examine the 6 elements of $S_3$. These are as follows:
$$\xymatrix@R=10mm@C=2mm{\circ\ar@{-}[d]&\circ\ar@{-}[d]&\circ\ar@{-}[d]\\\circ&\circ&\circ}\qquad
\xymatrix@R=10mm@C=2mm{\circ\ar@{-}[d]&\circ\ar@{-}[dr]&\circ\ar@{-}[dl]\\\circ&\circ&\circ}\qquad
\xymatrix@R=10mm@C=2mm{\circ\ar@{-}[dr]&\circ\ar@{-}[dl]&\circ\ar@{-}[d]\\\circ&\circ&\circ}\qquad
\xymatrix@R=10mm@C=2mm{\circ\ar@{-}[dr]&\circ\ar@{-}[dr]&\circ\ar@{-}[dll]\\\circ&\circ&\circ}\qquad
\xymatrix@R=10mm@C=2mm{\circ\ar@{-}[drr]&\circ\ar@{-}[dl]&\circ\ar@{-}[dl]\\\circ&\circ&\circ}\qquad
\xymatrix@R=10mm@C=2mm{\circ\ar@{-}[drr]&\circ\ar@{-}[d]&\circ\ar@{-}[dll]\\\circ&\circ&\circ}$$

According to our diagrammatic conventions, the identity produces the 2 free spheres, the basic crossing, which appears twice, produces the 4 classical + twisted spheres, and the last diagram produces the 4 half-liberated spheres. Our claim now, which will finish the proof, is that the 3-cycles produce the same spheres as the basic crossing.

Let us first discuss the case $\dot{\mathbb K}=\mathbb R$. Here the 3-cycle produce the ``sphere'' given by $abc=cab$. The point now is that, by using these relations, we obtain:
\begin{eqnarray*}
(ab-ba)^2
&=&abab-abba-baab+baba\\
&=&aabb-aabb-aabb+baab\\
&=&aabb-aabb-aabb+aabb\\
&=&0
\end{eqnarray*}

Thus the sphere collapses to $S^{N-1}_\mathbb R$, and we are done.

In the case $\dot{\mathbb K}=\bar{\mathbb R}$, the proof is similar. Indeed, the 3-cycle produces relations of type $abc=\pm cab$, the precise formulae being: (1) $abc=-acb=cab$ for $a,b,c$ distinct, (2) $aac=-aca=caa$ for $a\neq c$, (3) $aba=-aab$ for $a\neq b$, (4) $abb=-bab$ for $a\neq c$.

With these relations in hand, we have the following computation:
\begin{eqnarray*}
(ab+ba)^2
&=&abab+abba+baab+baba\\
&=&-aabb+aabb+aabb-baab\\
&=&-aabb+aabb+aabb-aabb\\
&=&0
\end{eqnarray*}

Thus the sphere collapses to $\bar{S}^{N-1}_{\mathbb R}$, and we are done.

Finally, in the remaining two cases $\dot{\mathbb K}=\mathbb C,\bar{\mathbb C}$ the proof of the extra needed formula, namely $ab^*=\pm b^*a$, is similar, by adding $*$ exponents where needed.
\end{proof}

The above result is complementary to those in \cite{bgo}. Let us recall indeed from there that the spheres $S^{N-1}_\mathbb R\subset S^{N-1}_{\mathbb R,*}\subset S^{N-1}_{\mathbb R,+}$ are precisely those whose corresponding quantum isometry group is easy. This is of course quite a sophisticated result, and Theorem 2.5 above, formulated directly in terms of the spheres themselves, is in a certain sense ``better''. However, unifying Theorem 2.5 with \cite{bgo} remains an open question. 

Let us discuss now what happens at depth 4:

\begin{proposition}
There are no new monomial spheres at depth $4$.
\end{proposition}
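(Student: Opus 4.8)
The plan is to prove that every monomial sphere of depth $\leq 4$, over each of the four fields $\dot{\mathbb K}=\mathbb R,\mathbb C,\bar{\mathbb R},\bar{\mathbb C}$, already appears among the 10 spheres of Theorem 2.5, that is, is of the form $\dot S^{N-1}_{\mathbb K,G}$ with $G\in\{\{1\},S_\infty^*,S_\infty\}$. First I would reduce to the case of a single permutation. A monomial sphere of depth $\leq 4$ over $\dot{\mathbb K}$ is $\dot S^{N-1}_{\mathbb K,E}$ with $E\subset S_4$, and as a quantum subspace of $S^{N-1}_{\mathbb K,+}$ it is the intersection $\bigcap_{\sigma\in E}\dot S^{N-1}_{\mathbb K,\{\sigma\}}$. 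Now the three spheres $S^{N-1}_{\mathbb K,+}\supset\dot S^{N-1}_{\mathbb K,*}\supset\dot S^{N-1}_{\mathbb K}$, corresponding to the chain $\{1\}\subset S_\infty^*\subset S_\infty$, form a nested family, and the intersection of a nested family of spheres is its smallest member; hence it suffices to show that for each individual $\sigma\in S_4$ the one-relation sphere $\dot S^{N-1}_{\mathbb K,\{\sigma\}}$ is one of the 10, equivalently that the group $G_\sigma$ it generates (in the sense of Proposition 2.3) lies in $\{\{1\},S_\infty^*,S_\infty\}$.

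Next I would dispose of the permutations fixing an endpoint. If $\sigma(1)=1$, write $\sigma=1\oplus\tau$ with $\tau\in S_3$ acting on the positions $\{2,3,4\}$; the depth-4 relation then reads $a_{i_1}\cdot(a_{i_2}a_{i_3}a_{i_4})=a_{i_1}\cdot\tau(a_{i_2}a_{i_3}a_{i_4})$, and multiplying on the left by $a_{i_1}$ and summing over $i_1$ (using $\sum_jx_j^2=1$, with adjoints inserted where needed in the complex case, exactly as in the proof of Theorem 2.5) turns it into the depth-3 relation $a_{i_2}a_{i_3}a_{i_4}=\tau(a_{i_2}a_{i_3}a_{i_4})$; conversely this depth-3 relation implies the depth-4 one. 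Hence $G_\sigma=G_\tau$, which is in $\{\{1\},S_\infty^*,S_\infty\}$ by (the proof of) Theorem 2.5. The case $\sigma(4)=4$ is symmetric. This settles every $\sigma\in S_4$ fixing $1$ or $4$.

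It then remains to treat the $14$ permutations with $\sigma(1)\neq1$ and $\sigma(4)\neq4$. Exactly one of them lies in $S_4^*$, the one whose relation over the coordinates is $abcd=cdab$. Here I would argue that this relation is in fact equivalent to half-commutation: it says precisely that any product of two coordinates commutes with any product of two coordinates, so $\sum_dx_dx_ax_bx_d=\sum_dx_bx_dx_dx_a=x_b\big(\sum_dx_d^2\big)x_a=x_bx_a$, whence $x_ax_bx_c=\sum_dx_ax_bx_cx_dx_d=\sum_dx_cx_dx_ax_bx_d=x_c\big(\sum_dx_dx_ax_bx_d\big)=x_cx_bx_a$; and conversely $abcd=cdab$ holds on the half-liberated sphere, so $\dot S^{N-1}_{\mathbb K,\{\sigma\}}$ is exactly that sphere, i.e. $G_\sigma=S_\infty^*$. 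For the remaining $13$ permutations I expect a uniform, easy argument: in each case a single specialization $i_j\mapsto i_{j'}$ of the defining relation — possibly after first re-applying $\sigma$ once to reach one of the relations $abcd=dcba$, $abcd=badc$, or $a^2cd=a^2dc$ — produces an adjacent squared pair of coordinates on both sides, and summing over that index collapses the sphere to $S^{N-1}_{\mathbb R}$ (resp. $\bar S^{N-1}_{\mathbb R}$ in the twisted case), so $G_\sigma=S_\infty$. For instance, from $abcd=bcda$ one sets $i_3=i_4$ and sums to obtain $ab=ba$; from $abcd=badc$ one sets $i_1=i_2$, obtaining $a^2(cd-dc)=0$, and sums to obtain $cd=dc$. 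The complex and twisted variants follow by decorating the letters with $*$'s, resp. by tracking the anticommutation signs, exactly as in the proof of Theorem 2.5.

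The main obstacle is the single case of the relation $abcd=cdab$: it is the one permutation outside the endpoint-fixing set that does not collapse to a classical/twisted sphere, and the non-obvious point is to recognize that it is equivalent to half-commutation, which requires the observation that all two-letter products become central among themselves, combined with the identity $\sum_dx_dx_ax_bx_d=x_bx_a$. The endpoint reduction and the $13$ collapsing cases are then routine bookkeeping, and assembling them with the nestedness argument of the first paragraph yields the statement.
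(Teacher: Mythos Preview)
Your proposal is correct and reaches the same conclusion, but along a somewhat different path from the paper. The paper's proof, like yours, examines the $24$ elements of $S_4$ individually, but it uses a stronger reduction: not only can outer strings (your endpoint case $\sigma(1)=1$ or $\sigma(4)=4$) be erased via $aX=aY\Rightarrow X=Y$, but any pair of adjacent strings whose endpoints are also adjacent can be erased via $XabY=ZabT\Rightarrow XY=ZT$ (and similarly with $ba$ on the right). This trims the list down to only three permutations, namely those giving the relations $abcd=cadb$, $abcd=bdac$, and $abcd=cdab$; the first two are then shown to collapse by iterating once to reach $abcd=dcba$, and the third is identified with half-commutation by a five-letter manipulation $abcde=cdabe=cbeda$ followed by a $b\mapsto b^*$ specialization and summation. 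Your route instead stops after the endpoint reduction, leaving $14$ permutations to handle by hand; your treatment of $abcd=cdab$ via the identity $\sum_d x_d x_a x_b x_d=x_bx_a$ is a nice alternative to the paper's five-letter trick, and the remaining $13$ cases indeed all collapse to the classical sphere by a suitable specialization $i_j=i_{j'}$ (with the two $4$-cycles $(2413)$ and $(3142)$ requiring one preliminary iteration of $\sigma$ to reach $abcd=dcba$, exactly as you anticipated). The paper's adjacent-string trick buys a shorter case analysis; your approach is more elementary but requires checking more permutations --- both are valid.
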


\begin{proof}
We must study the 24 elements of $S_4$. These are as follows:
$$\xymatrix@R=10mm@C=0.1mm{\circ\ar@{.}[d]&\circ\ar@{.}[d]&\circ\ar@{.}[d]&\circ\ar@{.}[d]\\\circ&\circ&\circ&\circ}\qquad
\xymatrix@R=10mm@C=0.1mm{\circ\ar@{.}[d]&\circ\ar@{.}[d]&\circ\ar@{-}[dr]&\circ\ar@{-}[dl]\\\circ&\circ&\circ&\circ}\qquad
\xymatrix@R=10mm@C=0.1mm{\circ\ar@{.}[d]&\circ\ar@{-}[dr]&\circ\ar@{-}[dl]&\circ\ar@{.}[d]\\\circ&\circ&\circ&\circ}\qquad 
\xymatrix@R=10mm@C=0.1mm{\circ\ar@{.}[d]&\circ\ar@{-}[dr]&\circ\ar@{-}[dr]&\circ\ar@{-}[dll]\\\circ&\circ&\circ&\circ}\qquad
\xymatrix@R=10mm@C=0.1mm{\circ\ar@{.}[d]&\circ\ar@{-}[drr]&\circ\ar@{-}[dl]&\circ\ar@{-}[dl]\\\circ&\circ&\circ&\circ}\qquad
\xymatrix@R=10mm@C=0.1mm{\circ\ar@{.}[d]&\circ\ar@{-}[drr]&\circ\ar@{-}[d]&\circ\ar@{-}[dll]\\\circ&\circ&\circ&\circ}$$

$$\xymatrix@R=10mm@C=0.1mm{\circ\ar@{-}[dr]&\circ\ar@{-}[dl]&\circ\ar@{.}[d]&\circ\ar@{.}[d]\\\circ&\circ&\circ&\circ}\qquad
\xymatrix@R=10mm@C=0.1mm{\circ\ar@{.}[dr]&\circ\ar@{.}[dl]&\circ\ar@{-}[dr]&\circ\ar@{-}[dl]\\\circ&\circ&\circ&\circ}\qquad
\xymatrix@R=10mm@C=0.1mm{\circ\ar@{-}[dr]&\circ\ar@{-}[dr]&\circ\ar@{-}[dll]&\circ\ar@{.}[d]\\\circ&\circ&\circ&\circ}\qquad 
\xymatrix@R=10mm@C=0.1mm{\circ\ar@{.}[dr]&\circ\ar@{.}[dr]&\circ\ar@{-}[dr]&\circ\ar@{-}[dlll]\\\circ&\circ&\circ&\circ}\qquad
\xymatrix@R=10mm@C=0.1mm{\bullet\ar@{-}[dr]&\bullet\ar@{-}[drr]&\bullet\ar@{-}[dll]&\bullet\ar@{-}[dl]\\\bullet&\bullet&\bullet&\bullet}\qquad
\xymatrix@R=10mm@C=0.1mm{\circ\ar@{-}[dr]&\circ\ar@{.}[drr]&\circ\ar@{.}[d]&\circ\ar@{-}[dlll]\\\circ&\circ&\circ&\circ}$$

$$\xymatrix@R=10mm@C=0.1mm{\circ\ar@{-}[drr]&\circ\ar@{-}[dl]&\circ\ar@{-}[dl]&\circ\ar@{.}[d]\\\circ&\circ&\circ&\circ}\qquad
\xymatrix@R=10mm@C=0.1mm{\bullet\ar@{-}[drr]&\bullet\ar@{-}[dl]&\bullet\ar@{-}[dr]&\bullet\ar@{-}[dll]\\\bullet&\bullet&\bullet&\bullet}\qquad
\xymatrix@R=10mm@C=0.1mm{\circ\ar@{-}[drr]&\circ\ar@{-}[d]&\circ\ar@{-}[dll]&\circ\ar@{.}[d]\\\circ&\circ&\circ&\circ}\qquad 
\xymatrix@R=10mm@C=0.1mm{\circ\ar@{.}[drr]&\circ\ar@{.}[d]&\circ\ar@{-}[dr]&\circ\ar@{-}[dlll]\\\circ&\circ&\circ&\circ}\qquad
\xymatrix@R=10mm@C=0.1mm{\bullet\ar@{-}[drr]&\bullet\ar@{-}[drr]&\bullet\ar@{-}[dll]&\bullet\ar@{-}[dll]\\\bullet&\bullet&\bullet&\bullet}\qquad
\xymatrix@R=10mm@C=0.1mm{\circ\ar@{.}[drr]&\circ\ar@{.}[drr]&\circ\ar@{-}[dl]&\circ\ar@{-}[dlll]\\\circ&\circ&\circ&\circ}$$

$$\xymatrix@R=10mm@C=0.1mm{\circ\ar@{-}[drrr]&\circ\ar@{.}[dl]&\circ\ar@{.}[dl]&\circ\ar@{-}[dl]\\\circ&\circ&\circ&\circ}\qquad
\xymatrix@R=10mm@C=0.1mm{\circ\ar@{-}[drrr]&\circ\ar@{-}[dl]&\circ\ar@{.}[d]&\circ\ar@{.}[dll]\\\circ&\circ&\circ&\circ}\qquad
\xymatrix@R=10mm@C=0.1mm{\circ\ar@{-}[drrr]&\circ\ar@{.}[d]&\circ\ar@{.}[dll]&\circ\ar@{-}[dl]\\\circ&\circ&\circ&\circ}\qquad 
\xymatrix@R=10mm@C=0.1mm{\circ\ar@{-}[drrr]&\circ\ar@{.}[d]&\circ\ar@{.}[d]&\circ\ar@{-}[dlll]\\\circ&\circ&\circ&\circ}\qquad
\xymatrix@R=10mm@C=0.1mm{\circ\ar@{-}[drrr]&\circ\ar@{-}[dr]&\circ\ar@{.}[dll]&\circ\ar@{.}[dll]\\\circ&\circ&\circ&\circ}\qquad
\xymatrix@R=10mm@C=0.1mm{\circ\ar@{-}[drrr]&\circ\ar@{.}[dr]&\circ\ar@{.}[dl]&\circ\ar@{-}[dlll]\\\circ&\circ&\circ&\circ}$$

Here the dotted lines correspond either to outer (left or right) strings, or to pairs of adjacent strings, and our claim is that all these dotted strings can be deleted. Indeed, for outer strings, this follows from the following computation, by summing over $a$:
$$aX=aY\implies a^*aX=a^*aY\implies X=Y$$
$$Xa=Ya\implies Xaa^*=Yaa^*\implies X=Y$$

As for the adjacent string claim, this follows from a similar computation:
$$XabY=ZabT\implies Xaa^*Y=Zaa^*T\implies XY=ZT$$
$$XabY=ZbaT\implies Xaa^*Y=Za^*aT\implies XY=ZT$$

Now since all the diagrams containing dotted strings correspond to depth 3 spheres, we have just to study the diagrams having no dotted strings. And there are 3 such diagrams, namely those having solid circles, with the corresponding relations being as follows:
$$abcd=cadb,\qquad abcd=bdac,\qquad abcd=cdab$$ 

The first two relations are equivalent, the corresponding diagrams being related by upside-down turning, and produce the usual sphere $\dot{S}^{N-1}_\mathbb K$. Indeed, we have:
$$abcd=cadb\implies abcd=cadb=dcba\implies abb^*d=db^*ba\implies ad=da$$

As for the last relations, these produce the sphere $\dot{S}^{N-1}_{\mathbb K,*}$, because we have:
$$abc=cba\implies abcd=cbad=cdab$$
$$abcd=cdab\implies abcde=cdabe=cbeda\implies abb^*de=b^*beda\implies ade=eda$$

Thus, we have no new monomial sphere at depth 4, as claimed.
\end{proof}

We conjecture that the 10 monomial spheres in Theorem 2.5 are the only ones, regardless of the depth. Solving this conjecture would of course fully clarify our axiomatization.

\section{Unitary quantum groups}

In this section we construct 10 compact quantum groups. We will show later on, in sections 5-6 below, that these are the quantum isometry groups of our 10 spheres.

We use the formalism of compact matrix quantum groups, developed by Woronowicz in \cite{wo1}, \cite{wo2}. For a detailed presentation of the theory, we refer to \cite{ntu}.

We begin with the following key definition, due to Wang \cite{wa1}:

\begin{definition}
The compact quantum groups $O_N^+,U_N^+$ are defined by
\begin{eqnarray*}
C(O_N^+)&=&C^*\left((u_{ij})_{i,j=1,\ldots,N}\Big|u_{ij}=u_{ij}^*,u^t=u^{-1}\right)\\
C(U_N^+)&=&C^*\left((u_{ij})_{i,j=1,\ldots,N}\Big|u^*=u^{-1},u^t=\bar{u}^{-1}\right)
\end{eqnarray*}
with Hopf algebra maps $\Delta(u_{ij})=\sum_ku_{ik}\otimes u_{kj}$, $\varepsilon(u_{ij})=\delta_{ij}$, $S(u_{ij})=u_{ji}^*$.
\end{definition}

As shown in \cite{wa1}, the above two algebras satisfy the axioms of Woronowicz in \cite{wo1}, \cite{wo2}, so the underlying spaces $O_N^+,U_N^+$ are indeed compact quantum groups. We have proper embeddings $O_N\subset O_N^+,U_N\subset U_N^+$, at any $N\geq2$. See \cite{ntu}, \cite{wa1}.

We have as well the following key examples, coming from \cite{bsp}, \cite{bdu}:

\begin{definition}
The half-liberations of $O_N,U_N$ are defined as
\begin{eqnarray*}
C(O_N^*)&=&C(O_N^+)\Big/\left<abc=cba,\forall a,b,c\in\{u_{ij}\}\right>\\
C(U_N^{**})&=&C(U_N^+)\Big/\left<abc=cba,\forall a,b,c\in\{u_{ij},u_{ij}^*\}\right>
\end{eqnarray*}
with Hopf algebra maps $\Delta,\varepsilon,S$ obtained by restriction.
\end{definition}

We refer to \cite{bsp}, \cite{bve} for details regarding $O_N^*$, and to \cite{bdu} for details regarding $U_N^{**}$. As already mentioned, and known since \cite{bdd}, in the unitary case the half-liberation operation is not unique. We will be back to more complicated examples in section 6 below.

Now let us twist the $2+2$ classical and half-classical quantum groups. We agree that all objects to be constructed appear by definition as subspaces of $O_N^+,U_N^+$, obtained by imposing certain extra relations on the standard coordinates $u_{ij}$. We first have:

\begin{proposition}
We have quantum groups $\bar{O}_N\subset O_N^+$, $\bar{U}_N\subset U_N^+$ defined via
$$\alpha\beta=\begin{cases}
-\beta\alpha&{\rm for}\ a,b\in\{u_{ij}\}\ {\rm distinct,\ on\ the\ same\ row\ or\ column}\\
\beta\alpha&{\rm otherwise}
\end{cases}$$
with the usual convention $\alpha=a,a^*$ and $\beta=b,b^*$.
\end{proposition}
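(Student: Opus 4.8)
The plan is to verify that the twisting relations in the statement are compatible with the structure maps $\Delta,\varepsilon,S$ of $O_N^+$ and $U_N^+$, so that these descend to the quotients. Since $C(\bar{O}_N)$ and $C(\bar{U}_N)$ are by construction quotients of $C(O_N^+)$ and $C(U_N^+)$, the fundamental matrix $u=(u_{ij})$ automatically remains orthogonal, respectively biunitary, and the only genuinely non-trivial point is the existence of the comultiplication $\Delta(u_{ij})=\sum_k u_{ik}\otimes u_{kj}$; once this is in place, the counit, the antipode, and the compact matrix quantum group axioms of Woronowicz \cite{wo1}, \cite{wo2} follow in the usual way, the algebra being generated by the coefficients of a biunitary corepresentation.

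First I would reduce the existence of $\Delta$ to a claim internal to the quotient. By the universal property of $C(O_N^+)$ (resp. $C(U_N^+)$), the assignment $u_{ij}\mapsto P_{ij}:=\sum_k u_{ik}\otimes u_{kj}$ extends to a $*$-homomorphism $C(O_N^+)\to C(\bar{O}_N)\otimes C(\bar{O}_N)$ (resp. the unitary analogue) if and only if the elements $P_{ij}$ satisfy, inside $C(\bar{O}_N)^{\otimes 2}$, the very relations defining $\bar{O}_N$. So the heart of the matter is the assertion: $P_{ij}P_{kl}=-P_{kl}P_{ij}$ whenever $u_{ij},u_{kl}$ are distinct and lie on the same row or the same column, and $P_{ij}P_{kl}=P_{kl}P_{ij}$ otherwise, together with the corresponding statement, with $*$'s suitably inserted, in $C(\bar{U}_N)^{\otimes 2}$.

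The hard part will be proving this assertion, which I would do by a direct sign computation. Write $P_{ij}P_{kl}=\sum_{r,s}u_{ir}u_{ks}\otimes u_{rj}u_{sl}$ and split the sum according to whether $r=s$ or $r\neq s$, tracking the sign produced in each of the two tensor legs and comparing with $P_{kl}P_{ij}$. The key combinatorial point is that the predicate ``on the same row or column'' acts in a complementary way on the two legs: for instance, if $i=k$ (same row) then $u_{ir},u_{is}$ sit on the same row $i$ and contribute a sign governed only by whether $r=s$, whereas $u_{rj},u_{sl}$ with $j\neq l$ sit on distinct rows exactly when $r\neq s$; one then checks that in each case the product of the two signs is $-1$, and the cases ``same column'' and ``neither'' are treated the same way. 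Particular care is needed for the diagonal terms $r=s$, and for degenerate configurations in which two indices coincide so that the ``distinct'' hypothesis fails — there one checks by hand that the relevant entries really do commute or anticommute as required. Alternatively one may identify $\bar{O}_N$ and $\bar{U}_N$ as cocycle twists of $O_N$ and $U_N$ by the sign bicharacter on $\mathbb{Z}_2^N$, in the spirit of \cite{bbc}, which yields $\Delta$ for free; but the computation above is self-contained.

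Finally I would record that $\varepsilon$ and $S$ descend as well. For $\varepsilon(u_{ij})=\delta_{ij}$ this is immediate: a pair $u_{ij},u_{kl}$ cannot simultaneously be distinct, diagonal (i.e. $i=j$ and $k=l$), and share a row or column, so every anticommutation relation is sent to the valid identity $0=0$, while the commutation relations are trivially preserved. For the antipode $S(u_{ij})=u_{ji}^*$, note that transposition interchanges rows and columns but preserves the predicate ``distinct and on the same row or column'', and that $S$, being an anti-homomorphism, turns $ab=\pm ba$ into $S(b)S(a)=\pm S(a)S(b)$, which is again one of the defining relations — adjunction preserving anticommutation. Hence $\Delta,\varepsilon,S$ all pass to $C(\bar{O}_N)$ and $C(\bar{U}_N)$, and $\bar{O}_N\subset O_N^+$, $\bar{U}_N\subset U_N^+$ are compact quantum groups, as claimed.
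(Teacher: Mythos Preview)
Your proposal is correct and follows essentially the same route as the paper: both reduce everything to showing that $\Delta$ descends, set $U_{ij}=\sum_k u_{ik}\otimes u_{kj}$, and verify the twisted commutation relations for the $U_{ij}$ by splitting the double sum according to whether the inner summation indices coincide and tracking the sign in each tensor leg. The paper simply writes out the two representative computations (same row, $j\neq k$; and different row and column, $i\neq k$, $j\neq l$) explicitly rather than describing the mechanism in words, and treats $\varepsilon,S$ as clear without your more detailed justification.
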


\begin{proof}
These quantum groups are well-known, see \cite{bbc}. The idea indeed is that the existence of $\varepsilon,S$ is clear. Regarding now $\Delta$, set $U_{ij}=\sum_ku_{ik}\otimes u_{kj}$. For $j\neq k$ we have:
\begin{eqnarray*}
U_{ij}U_{ik}
&=&\sum_{s\neq t}u_{is}u_{it}\otimes u_{sj}u_{tk}+\sum_su_{is}u_{is}\otimes u_{sj}u_{sk}\\
&=&\sum_{s\neq t}-u_{it}u_{is}\otimes u_{tk}u_{sj}+\sum_su_{is}u_{is}\otimes(-u_{sk}u_{sj})\\
&=&-U_{ik}U_{ij}
\end{eqnarray*}

Also, for $i\neq k,j\neq l$ we have:
\begin{eqnarray*}
U_{ij}U_{kl}
&=&\sum_{s\neq t}u_{is}u_{kt}\otimes u_{sj}u_{tl}+\sum_su_{is}u_{ks}\otimes u_{sj}u_{sl}\\
&=&\sum_{s\neq t}u_{kt}u_{is}\otimes u_{tl}u_{sj}+\sum_s(-u_{ks}u_{is})\otimes(-u_{sl}u_{sj})\\
&=&U_{kl}U_{ij}
\end{eqnarray*}

This finishes the proof in the real case. In the complex case the remaining relations can be checked in a similar way, by putting $*$ exponents in the middle.
\end{proof}

It remains to twist the half-liberated quantum groups $O_N^*,U_N^{**}$. In order to do so, given three coordinates $a,b,c\in\{u_{ij}\}$, let us set $span(a,b,c)=(r,c)$, where $r,c\in\{1,2,3\}$ are the number of rows and columns spanned by $a,b,c$. In other words, if we write $a=u_{ij},b=u_{kl},c=u_{pq}$ then $r=\#\{i,k,p\}$ and $l=\#\{j,l,q\}$. We have then:

\begin{proposition}
We have quantum groups $\bar{O}_N^*\subset O_N^+$, $\bar{U}_N^{**}\subset U_N^+$ defined via
$$\alpha\beta\gamma=\begin{cases}
-\gamma\beta\alpha&{\rm for}\ a,b,c\in\{u_{ij}\}\ {\rm with}\ span(a,b,c)=(\leq 2,3)\ {\rm or}\ (3,\leq 2)\\
\gamma\beta\alpha&{\rm otherwise}
\end{cases}$$
with the usual conventions $\alpha=a,a^*$, $\beta=b,b^*$ and $\gamma=c,c^*$.
\end{proposition}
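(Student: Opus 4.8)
The plan is to follow the scheme of the proof of Proposition 3.4. To begin with, the universal $C^*$-algebras $C(\bar{O}_N^*),C(\bar{U}_N^{**})$ exist as quotients of $C(O_N^+),C(U_N^+)$, and are nonzero: exactly as in Proposition 1.5, one checks that the coordinates of $\bar{O}_N,\bar{U}_N$ satisfy the imposed relations — a finite case verification, using that in $C(\bar{O}_N)$ a reversal $abc\to cba$ picks up the product of the three pairwise commutation signs — so that we have surjections $C(\bar{O}_N^*)\to C(\bar{O}_N)$ and $C(\bar{U}_N^{**})\to C(\bar{U}_N)$. It then remains to check that the structure maps $\Delta,\varepsilon,S$ of $O_N^+,U_N^+$ descend to the quotients. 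Here $\varepsilon,S$ are easy (see below), so the real content is to show that the elements $U_{ij}=\sum_ku_{ik}\otimes u_{kj}$, and in the unitary case their adjoints, satisfy the defining relations inside $C(\bar{O}_N^*)\otimes C(\bar{O}_N^*)$, resp. $C(\bar{U}_N^{**})\otimes C(\bar{U}_N^{**})$.

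The device I would use is to repackage the two-case definition into a single sign. Put $\sigma(r,c)=-1$ when exactly one of $r,c$ equals $3$, and $\sigma(r,c)=+1$ otherwise; equivalently $\sigma(r,c)=(-1)^{[r=3]+[c=3]}$ with $[\,\cdot\,]$ the indicator, so that $\sigma$ is symmetric. Running through the patterns of coinciding versus distinct rows, columns and coordinates, one verifies that the relations of $\bar{O}_N^*$ are precisely $u_{i_1j_1}u_{i_2j_2}u_{i_3j_3}=\sigma(\#\{i_1,i_2,i_3\},\#\{j_1,j_2,j_3\})\,u_{i_3j_3}u_{i_2j_2}u_{i_1j_1}$ for all triples of coordinates, and likewise for $\bar{U}_N^{**}$ after decorating with $*$ exponents in the usual way, the span of a conjugated entry $u_{ij}^*$ being read off from $u_{ij}$. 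Symmetry of $\sigma$ then gives at once that $S$ (which reverses products and transposes indices) descends; and $\varepsilon$ descends because whenever $\sigma=-1$ the relevant product $\delta_{i_1j_1}\delta_{i_2j_2}\delta_{i_3j_3}$ vanishes — a nonzero value would force $\#\{i_1,i_2,i_3\}=\#\{j_1,j_2,j_3\}$, hence $r=c$.

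The coproduct check is then a short computation, in the spirit of the one in Proposition 3.4. Treating the real case (and inserting $*$'s in the middle for the complex one), one expands $U_{ij}U_{kl}U_{pq}=\sum_{s,t,w}u_{is}u_{kt}u_{pw}\otimes u_{sj}u_{tl}u_{wq}$ and applies the packaged relation on the two legs of each summand: the first leg contributes $\sigma(\#\{i,k,p\},m)$ and the second $\sigma(m,\#\{j,l,q\})$, where $m=\#\{s,t,w\}$, and the product is $\sigma(\#\{i,k,p\},\#\{j,l,q\})$, independently of $s,t,w$, since the exponent $[m=3]$ occurs twice and cancels. Hence $U_{ij}U_{kl}U_{pq}=\sigma(\#\{i,k,p\},\#\{j,l,q\})\,U_{pq}U_{kl}U_{ij}$, which is exactly the relation required for the triple $u_{ij},u_{kl},u_{pq}$, since $span(u_{ij},u_{kl},u_{pq})=(\#\{i,k,p\},\#\{j,l,q\})$; this establishes that $\Delta$ descends. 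I expect the only genuinely delicate step to be the verification of the packaged formula $\sigma$ in full generality — that it faithfully reproduces the stated two-case definition not just for triples of distinct coordinates spanning $3$ rows or $3$ columns, but in all degenerate configurations (two or three coordinates equal, or rows/columns collapsing), where the ``otherwise'' clause must output $+1$ — together with the parallel bookkeeping with $*$ exponents in the unitary case. Everything downstream is then mechanical.
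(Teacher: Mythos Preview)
Your proposal is correct and follows essentially the same scheme as the paper: check that $\varepsilon,S,\Delta$ descend to the quotient, with $\varepsilon$ handled by the observation that a nonzero Kronecker product forces $r=c$, $S$ by the symmetry of the sign table, and $\Delta$ by expanding $U_{ia}U_{jb}U_{kc}$ and tracking the two signs coming from the tensor legs. The one difference is in the execution of the $\Delta$ check: the paper verifies by hand, for each value of the row span $r\in\{1,2,3\}$ and each value of the intermediate span $s=\#\{x,y,z\}\in\{1,2,3\}$, that the product of the two $\pm$ signs is the required one, whereas your encoding $\sigma(r,c)=(-1)^{[r=3]+[c=3]}$ turns this into the one-line cancellation $\sigma(r,m)\sigma(m,c)=(-1)^{[r=3]+2[m=3]+[c=3]}=\sigma(r,c)$. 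This is a genuine streamlining of the paper's argument, and the formula $\sigma(r,c)=(-1)^{[r=3]+[c=3]}$ does faithfully reproduce the paper's $3\times3$ sign table in all nine cases, so the ``delicate step'' you flag is in fact routine.
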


\begin{proof}
The commutation/anticommutation signs in the statement are as follows:
$$\begin{matrix}
r\backslash c&1&2&3\\
1&+&+&-\\
2&+&+&-\\
3&-&-&+
\end{matrix}$$

We first prove the result for $\bar{O}_N^*$. The construction of the counit, $\varepsilon(u_{ij})=\delta_{ij}$, requires the Kronecker symbols $\delta_{ij}$ to commute/anticommute according to the above table. Equivalently, we must prove that the situation $\delta_{ij}\delta_{kl}\delta_{pq}=1$ can appear only in a case where the above table indicates ``+''. But this is clear, because $\delta_{ij}\delta_{kl}\delta_{pq}=1$ implies $r=c$.

The construction of the antipode $S$ is clear too, because this requires the choice of our $\pm$ signs to be invariant under transposition, and this is true, the table being symmetric. We are therefore left with the construction of $\Delta$. With $U_{ij}=\sum_ku_{ik}\otimes u_{kj}$, we have:
\begin{eqnarray*}
U_{ia}U_{jb}U_{kc}
&=&\sum_{xyz}u_{ix}u_{jy}u_{kz}\otimes u_{xa}u_{yb}u_{zc}\\
&=&\sum_{xyz}\pm u_{kz}u_{jy}u_{ix}\otimes\pm u_{zc}u_{yb}u_{xa}\\
&=&\pm U_{kc}U_{jb}U_{ia}
\end{eqnarray*}

We must prove that, when examining the precise two $\pm$ signs in the middle formula, their product produces the correct $\pm$ sign at the end. The point now is that both these signs depend only on $s=span(x,y,z)$, and for $s=1,2,3$ respectively:

-- For a $(3,1)$ span we obtain $+-$, $+-$, $-+$, so a product $-$ as needed.

-- For a $(2,1)$ span we obtain $++$, $++$, $--$, so a product $+$ as needed.

-- For a $(3,3)$ span we obtain $--$, $--$, $++$, so a product $+$ as needed.

-- For a $(3,2)$ span we obtain $+-$, $+-$, $-+$, so a product $-$ as needed.

-- For a $(2,2)$ span we obtain $++$, $++$, $--$, so a product $+$ as needed.

Together with the fact that our problem is invariant under $(r,c)\to(c,r)$, and with the fact that for a $(1,1)$ span there is nothing to prove, this finishes the proof.
 
For $\bar{U}_N^{**}$ the proof is similar, by putting $*$ exponents in the middle.
\end{proof}

Regarding the inclusions between these quantum groups, we have:

\begin{proposition}
We have the following diagram of quantum groups,
$$\xymatrix@R=15mm@C=15mm{
U_N\ar[r]&U_N^{**}\ar[r]&U_N^+&\bar{U}_N^{**}\ar@.[l]&\bar{U}_N\ar@.[l]\\
O_N\ar[r]\ar[u]&O_N^*\ar[r]\ar[u]&O_N^+\ar[u]&\bar{O}_N^*\ar@.[u]\ar@.[l]&\bar{O}_N\ar@.[u]\ar@.[l]}$$
with all inclusions being proper at $N\geq3$.
\end{proposition}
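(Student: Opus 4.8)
The plan is to follow the pattern of Proposition 1.5 and Theorem 1.7 on the sphere side: first establish all the arrows, then discuss their properness at $N\geq3$. For the untwisted part, and for the five vertical arrows, everything is immediate from the presentations in Definitions 3.1--3.2 and Propositions 3.3--3.4: passing from $C(O_N^+)$ to $C(O_N^*)$ to $C(O_N)$ only adds relations ($abc=cba$, then $ab=ba$, the former implied by the latter), and imposing $u=\bar u$ on any of $C(U_N),C(U_N^{**}),C(U_N^+),C(\bar U_N^{**}),C(\bar U_N)$ collapses the variables $u_{ij}^*$ onto $u_{ij}$ and produces exactly $C(O_N),C(O_N^*),C(O_N^+),C(\bar O_N^*),C(\bar O_N)$, giving the verticals and the untwisted horizontals. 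On the twisted side the arrows $\bar O_N^*\to O_N^+$, $\bar U_N^{**}\to U_N^+$ (forgetting relations) are clear too, so the only real content is the inclusion $\bar O_N\subset\bar O_N^*$ and its unitary analogue. For this I would check that the defining relations of $\bar O_N^*$ hold in $C(\bar O_N)$: reversing a length-$3$ word with $a=u_{ij}$, $b=u_{kl}$, $c=u_{pq}$ inside $C(\bar O_N)$ yields $abc=\epsilon_{ab}\epsilon_{ac}\epsilon_{bc}\,cba$, where $\epsilon_{xy}=-1$ exactly when $x\neq y$ lie on a common row or column; a short case analysis on $span(a,b,c)=(r,c)$ shows this product of pairwise signs depends only on $(r,c)$ and reproduces the sign table from the proof of Proposition 3.4 --- e.g. $(3,1)$ gives $(-1)^3=-1$, $(3,2)$ gives $(-1)(+1)(+1)=-1$, $(2,2)$ gives $(-1)(-1)(+1)=+1$, the cases $(1,1),(2,1),(3,3)$ give $+1$, and the rest follow by the symmetry $(r,c)\leftrightarrow(c,r)$. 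Adding $*$ exponents throughout gives $\bar U_N\subset\bar U_N^{**}$.

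For properness at $N\geq3$ I would treat three families of arrows. The five vertical arrows are proper: the matrix $diag(i,1,\dots,1)$ is a point of $U_N$, and in fact of $\bar U_N$ and $\bar U_N^{**}$ as well (its off-diagonal entries vanish, so the relations involving them are trivially satisfied, while on the diagonal everything commutes with sign $+$), yet it is not self-adjoint, hence lies in none of $O_N,O_N^*,O_N^+,\bar O_N^*,\bar O_N$. The two arrows $\bar O_N^*\to O_N^+$ and $\bar U_N^{**}\to U_N^+$ are proper by comparing classical versions: abelianizing the span-$(\leq2,3)$ and $(3,\leq2)$ relations forces every row and every column of a classical point of $\bar O_N^*$ to carry at most one nonzero entry once $N\geq3$ --- a row with two nonzero entries, in columns $j,k$, would confine every other row to columns $j,k$ through the span-$(2,3)$ relations, leaving some column of the orthogonal matrix equal to zero --- so the classical version of $\bar O_N^*$ is the group $H_N$ of signed monomial matrices, properly contained in $O_N$, the classical version of $O_N^+$; likewise for $\bar U_N^{**}$. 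Finally the four untwisted horizontal arrows $O_N\subset O_N^*\subset O_N^+$ and $U_N\subset U_N^{**}\subset U_N^+$ are proper by the known results of \cite{bsp}, \cite{bve}, \cite{bdu}: the half-liberations are non-classical at $N\geq2$, and strictly contained in the free versions at $N\geq3$.

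The delicate point, which I expect to be the main obstacle, is the properness of the two twisted horizontal arrows $\bar O_N\subset\bar O_N^*$ and $\bar U_N\subset\bar U_N^{**}$: here comparing classical versions is useless, since the classical version of $\bar O_N^*$ is again $H_N$, the same as that of $\bar O_N$. I see two routes. The first is a twisting argument: $\bar O_N,\bar O_N^*$ are the $2$-cocycle twists of $O_N,O_N^*$ from \cite{bbc}, compatibly with the inclusion, and cocycle twisting is a monoidal equivalence of representation categories, hence preserves the intertwiner dimensions $\dim Hom(u^{\otimes k},u^{\otimes l})$; since these differ between $O_N$ and $O_N^*$ at $N\geq3$, they differ between $\bar O_N$ and $\bar O_N^*$, and with the inclusion already established the latter is proper. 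The second route restricts to a single row: the first row of $\bar O_N^*$ satisfies exactly the defining relations of $\bar S^{N-1}_{\mathbb R,*}$ (a triple inside one row has span $(1,c)$ with $c\leq3$, giving $abc=-cba$ when the three columns are distinct and $abc=cba$ otherwise), while the first row of $\bar O_N$ satisfies those of $\bar S^{N-1}_{\mathbb R}$; granting the standard Haar-state type fact that these row subalgebras are faithful copies of $C(\bar S^{N-1}_{\mathbb R,*})$ and $C(\bar S^{N-1}_{\mathbb R})$, the strictness $\bar S^{N-1}_{\mathbb R}\subsetneq\bar S^{N-1}_{\mathbb R,*}$ at $N\geq3$ from Theorem 1.7 forces $\bar O_N\subsetneq\bar O_N^*$. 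In both approaches the hypothesis $N\geq3$ is essential --- at $N=2$ there are no span-$3$ relations and these inclusions degenerate, exactly as on the sphere side --- and the real work lies either in setting up the twisting formalism cleanly enough, or in establishing the row-subalgebra faithfulness.
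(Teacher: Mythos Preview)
Your treatment of the inclusions themselves, and of the properness of the vertical and untwisted horizontal arrows, is essentially what the paper does (it just says ``as in the proof of Proposition 1.5'' for the former, and cites \cite{bgo}, \cite{bve} for the latter). The real point of comparison is the twisted horizontal arrows.

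For $\bar O_N^*\subset O_N^+$ and $\bar U_N^{**}\subset U_N^+$ the paper also argues by abelianization, introducing groups $X_N,Y_N$ as the classical versions of $\bar O_N^*,\bar U_N^{**}$. Where you diverge is on $\bar O_N\subset\bar O_N^*$ and $\bar U_N\subset\bar U_N^{**}$: the paper asserts that $X_N$ is \emph{different} from $H_N$ at $N\geq3$ and concludes properness directly. Your computation that in fact $X_N=H_N$ is correct---your span-$(2,3)$ argument is valid, and one can check it concretely on a block-diagonal matrix $\mathrm{diag}(R_\theta,1,\ldots,1)$, which is forced to have $\cos\theta\sin\theta=0$ by the relation $u_{11}u_{12}u_{33}=0$. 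So the paper's proof, as written, does not actually establish the properness of these two inclusions, and your instinct that this is the delicate step is exactly right.

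Of your two proposed repairs, the first is the one that fits the paper's architecture. You phrase it as a cocycle-twist argument citing \cite{bbc}, but that reference only treats $\bar O_N$, not $\bar O_N^*$; the clean version uses the paper's own Theorem~4.7 and Proposition~5.2: the twisted intertwiner spaces $\mathrm{span}(\bar T_\pi:\pi\in P_2)$ and $\mathrm{span}(\bar T_\pi:\pi\in P_2^*)$ have the same Gram matrices as their untwisted counterparts, hence the same dimensions, so $\bar O_N\neq\bar O_N^*$ follows from $O_N\neq O_N^*$. The price is a forward reference to Section~4, but nothing there depends on Proposition~3.5, so there is no circularity. Your second route, through the row subalgebras, is also sound but leans on the ergodicity results of Section~6 (Proposition~6.4), which is further downstream and closer to circular, since Theorem~5.7 is used there. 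The first route is the one to take.
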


\begin{proof}
The inclusions are clear, as in the proof of Proposition 1.5. For the  properness assertion, we first compute the classical versions. Our claim is that these are as follows, with the 6 compact groups at right being different at $N\geq 3$:
$$\xymatrix@R=15mm@C=15mm{
U_N\ar[r]&U_N\ar[r]&U_N&Y_N\ar@.[l]&K_N\ar@.[l]\\
O_N\ar[r]\ar[u]&O_N\ar[r]\ar[u]&O_N\ar[u]&X_N\ar@.[u]\ar@.[l]&H_N\ar@.[u]\ar@.[l]}$$

Indeed, regarding the groups $H_N=O_N\cap\bar{O}_N$ and $K_N=U_N\cap\bar{U}_N$, these appear respectively from $O_N,U_N$ by assuming that the standard coordinates satisfy the relations $ab=0$, for any $a\neq b$ on the same row or the same column of $u$. We recognize here the hyperoctahedral group $H_N=\mathbb Z_2\wr S_N$, and its complex version $K_N=\mathbb T\wr S_N$.

Regarding now $X_N,Y_N$, these are certain compact groups, appearing respectively from $O_N,U_N$ by assuming that the coordinates satisfy $abc=0$, under the span conditions producing anticommutation in Proposition 3.4. Since these groups are different, and different as well from $H_N,K_N$ at $N\geq3$, this finishes the proof of our claim.

We deduce that the inclusions on the right in the statement are all proper. As for the properness of the inclusions on the left, this is well-known from \cite{bgo}, \cite{bve}.
\end{proof}

At $N=2$ the situation is similar to the one for the spheres, the diagram of inclusions between the 10 quantum groups being:
$$\xymatrix@R=15mm@C=15mm{
U_2\ar[r]&U_2^{**}\ar[r]&U_2^+&\bar{U}_2^{**}\ar@.[l]&\bar{U}_2\ar@.[l]\\
O_2\ar[r]\ar[u]&O_2^+\ar[r]\ar[u]&O_2^+\ar[u]&O_2^+\ar@.[u]\ar@.[l]&\bar{O}_N\ar@.[u]\ar@.[l]}$$

This can be indeed deduced by using the same arguments as in the sphere case.

Regarding now the relation with our 10 spheres, let us first recall:

\begin{definition}
A quantum group action $G\curvearrowright X$ consists in having a morphism of $C^*$-algebras $\Phi:C(X)\to C(G)\otimes C(X)$ satisfying the following conditions:
\begin{enumerate}
\item Coassociativity: $(id\otimes\Phi)\Phi=(\Delta\otimes id)\Phi$.

\item Counitality: $(\varepsilon\otimes id)\Phi=id$.

\item Faithfulness: $({\rm Im}\,\Phi)(C(G)\otimes 1)$ is dense in $C(G)\otimes C(X)$.
\end{enumerate}
\end{definition}

The morphism in the statement is called coaction. See \cite{bgo}, \cite{ntu}.

Consider now one of our 10 quantum groups, denoted $U_N^\times$. We denote by $S^{N-1}_\times$ the corresponding sphere, with the correspondence between quantum groups and spheres being obtained by superposing the diagrams in Proposition 1.5 and Proposition 3.5. 

We denote the spherical coordinates by $z_i$, in both the real and complex cases.

We have the following result, that we will further improve in section 5 below:

\begin{theorem}
We have an action $U_N^\times\curvearrowright S^{N-1}_\times$, with the corresponding coaction map being given by $\Phi(z_i)=\sum_ju_{ij}\otimes z_j$.
\end{theorem}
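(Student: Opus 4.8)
The plan is to verify directly that the formula $\Phi(z_i)=\sum_j u_{ij}\otimes z_j$ defines a morphism of $C^*$-algebras $C(S^{N-1}_\times)\to C(U_N^\times)\otimes C(S^{N-1}_\times)$, and then check the three axioms of Definition 3.6. Since $C(S^{N-1}_\times)$ is a universal $C^*$-algebra with generators $z_1,\dots,z_N$ subject to the defining monomial relations (plus the quadratic sphere relation), producing the morphism amounts to showing that the elements $Z_i:=\sum_j u_{ij}\otimes z_j$ of $C(U_N^\times)\otimes C(S^{N-1}_\times)$ satisfy those same relations. This is the only substantive point; the rest is formal.

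First I would treat the untwisted cases. For the quadratic relation: using $u^*=u^{-1}$ (resp. $u^t=u^{-1}$ in the real case) and $\sum_i z_iz_i^*=\sum_i z_i^*z_i=1$, one computes $\sum_i Z_iZ_i^*=\sum_i\sum_{j,k}u_{ij}u_{ik}^*\otimes z_jz_k^*=\sum_{j,k}(\sum_i u_{ij}u_{ik}^*)\otimes z_jz_k^* = \sum_j 1\otimes z_jz_j^*=1$, and symmetrically for $\sum_i Z_i^*Z_i$; in the real case the same computation using self-adjointness of the $u_{ij}$ and $z_i$ gives $\sum_i Z_i^2=1$. For the commutation-type relations defining $S^{N-1}_\mathbb R$, $S^{N-1}_\mathbb C$, $S^{N-1}_{\mathbb R,*}$, $S^{N-1}_{\mathbb C,**}$, the point is that the coordinates $u_{ij}$ of the corresponding quantum group satisfy exactly the matching relations (commutation for $O_N,U_N$; half-commutation $abc=cba$ for $O_N^*,U_N^{**}$), so that, say, $Z_iZ_jZ_k=\sum_{p,q,r}u_{ip}u_{jq}u_{kr}\otimes z_pz_qz_r$; one expands, and using half-commutation of the $u$'s together with the half-commutation of the $z$'s, reorganizes the triple sum into $\sum u_{kr}u_{jq}u_{ip}\otimes z_rz_qz_p=Z_kZ_jZ_i$. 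This is precisely the bookkeeping already carried out in the proofs of Propositions 3.3 and 3.4 for the quantum group multiplication $U_{ij}=\sum_k u_{ik}\otimes u_{kj}$, so I would invoke those computations verbatim, replacing the second tensor factor $u_{kj}$ by $z_j$ (resp. $z_j^*$) and noting that the sign/span analysis is identical.

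For the twisted spheres $\bar S^{N-1}_\mathbb R$, $\bar S^{N-1}_\mathbb C$, $\bar S^{N-1}_{\mathbb R,*}$, $\bar S^{N-1}_{\mathbb C,**}$ paired with $\bar O_N,\bar U_N,\bar O_N^*,\bar U_N^{**}$, the verification that $Z_i$ satisfies the twisted (anti)commutation relations is exactly the content of the computations displayed in Propositions 3.3 and 3.4: there one checks that the entries $U_{ij}=\sum_k u_{ik}\otimes u_{kj}$ of the comultiplication again obey the twisted relations, by splitting sums into diagonal ($s=t$) and off-diagonal ($s\neq t$) parts and tracking the product of the two sign contributions. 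The same split, with $u_{kj}\rightsquigarrow z_j$, shows $Z_iZ_j=-Z_jZ_i$ etc.; the crucial sign-multiplication lemma (a $(\le2,3)$ or $(3,\le2)$ span on the summation indices yields a net $-$, all other spans yield $+$) is already proved and is reused unchanged. I would state this as: ``the same computation as in the proof of Proposition 3.4, with the second tensor leg $u_{kj}$ replaced by $z_j$ (or $z_j^*$), shows that the $Z_i$ satisfy the defining relations of $S^{N-1}_\times$,'' and similarly for the half-commutative pairs. So the morphism $\Phi$ exists.

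It remains to check the three coaction axioms. Coassociativity: $(id\otimes\Phi)\Phi(z_i)=\sum_j u_{ij}\otimes\Phi(z_j)=\sum_{j,k}u_{ij}\otimes u_{jk}\otimes z_k$, while $(\Delta\otimes id)\Phi(z_i)=\sum_k\Delta(u_{ik})\otimes z_k=\sum_{j,k}u_{ij}\otimes u_{jk}\otimes z_k$; equal. Counitality: $(\varepsilon\otimes id)\Phi(z_i)=\sum_j\varepsilon(u_{ij})z_j=\sum_j\delta_{ij}z_j=z_i$. Faithfulness: $\mathrm{Im}(\Phi)(C(U_N^\times)\otimes1)$ contains all $Z_i(a\otimes1)=\sum_j u_{ij}a\otimes z_j$; multiplying on the left by $(u_{ki}^*\otimes 1)$ — legitimate since the $u_{ij}$ generate $C(U_N^\times)$ and the matrix $u$ is unitary (real case: orthogonal) — and summing over $i$ recovers $1\otimes z_j$; combined with $C(U_N^\times)\otimes1$ and the fact that the $z_j$ generate $C(S^{N-1}_\times)$, this gives density. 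I expect the main obstacle to be purely organizational: phrasing the ten cases uniformly so that the sign/span computations of Propositions 3.3 and 3.4 can be cited rather than repeated. There is no genuine analytic difficulty, since boundedness of the $Z_i$ is automatic in the completed tensor product and the relations are the only thing to check.
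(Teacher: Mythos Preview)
Your proposal is correct and follows the same approach as the paper: verify that the elements $Z_i=\sum_j u_{ij}\otimes z_j$ satisfy the defining relations of each sphere by direct expansion and sign-tracking, then note that the coaction axioms follow formally from $u$ being a corepresentation. The paper writes out the twisted half-liberated computation explicitly (splitting the triple sum over $a,b,c$ into five pieces according to which indices coincide) rather than invoking Propositions~3.3--3.4, and simply cites references for coassociativity, counitality and faithfulness rather than spelling them out as you do, but these are purely expository differences.
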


\begin{proof}
As a first observation, assuming that the formula $\Phi(z_i)=\sum_ju_{ij}\otimes z_j$ produces indeed a morphism of algebras, the axioms in Definition 3.6 are clear, because they come from the fact that $u=(u_{ij})$ is a fundamental corepresentation for $U_N^\times$. See \cite{bgo}, \cite{sol}.

In order to prove now that we have a morphism of algebras, we must check the fact that the following elements satisfy the defining relations for our spheres:
$$Z_i=\sum_ju_{ij}\otimes z_j$$

We have 10 spheres to be investigated, and the proof goes as follows:

\underline{1-2. $S^{N-1}_{\mathbb R,+},S^{N-1}_{\mathbb C,+}$.} The result for $S^{N-1}_{\mathbb C,+}$ follows from:
\begin{eqnarray*}
\sum_iZ_iZ_i^*&=&\sum_{ijk}(u_{ij}\otimes z_j)(u_{ik}^*\otimes z_k^*)=\sum_{jk}(u^t\bar{u})_{jk}\otimes z_jz_k^*=\sum_j1\otimes z_jz_j^*=1\\
\sum_iZ_i^*Z_i&=&\sum_{ijk}(u_{ik}^*\otimes z_k^*)(u_{ij}\otimes z_j)=\sum_{jk}(u^*u)_{kj}\otimes z_k^*z_j=\sum_j1\otimes z_j^*z_j=1
\end{eqnarray*}

Regarding now $S^{N-1}_{\mathbb R,+}$, the result here follows by restriction, because when assuming $z_i=z_i^*$, the relations $Z_i=Z_i^*$ for any $i$ are equivalent to $u_{ij}=u_{ij}^*$ for any $i,j$.

\underline{3-6. $S^{N-1}_\mathbb R,S^{N-1}_\mathbb C,\bar{S}^{N-1}_\mathbb R,\bar{S}^{N-1}_\mathbb C$.} The results in the classical cases are clear, because the actions in the statement are the usual ones, $O_N\curvearrowright S^{N-1}_\mathbb R$ and $U_N\curvearrowright S^{N-1}_\mathbb C$.

For the sphere $\bar{S}^{N-1}_\mathbb R$ this follows from the following computation, with $i\neq k$:
\begin{eqnarray*}
Z_iZ_k
&=&\sum_{jl}u_{ij}u_{kl}\otimes z_jz_l
=\sum_{j\neq l}u_{ij}u_{kl}\otimes z_jz_l+\sum_ju_{ij}u_{kj}\otimes z_j^2\\
&=&\sum_{j\neq l}u_{kl}u_{ij}\otimes(-z_lz_j)+\sum_j(-u_{kj}u_{ij})\otimes z_j^2\\
&=&-\sum_{jl}u_{kl}u_{ij}\otimes z_lz_j=-Z_kZ_i
\end{eqnarray*}

For the sphere $\bar{S}^{N-1}_\mathbb R$ the proof is similar, by adding $*$ exponents where needed.

\underline{7-10. $S^{N-1}_{\mathbb R,*},S^{N-1}_{\mathbb C,**},\bar{S}^{N-1}_{\mathbb R,*},\bar{S}^{N-1}_{\mathbb C,**}$.} We only prove here the result in the twisted cases, the proof in the untwisted cases being similar, by removing all signs. Let us first discuss the sphere $\bar{S}^{N-1}_{\mathbb R,*}$. We have two sets of conditions to be checked, as follows:

-- For $i,j,k$ distinct, we must have $Z_iZ_jZ_k=-Z_kZ_jZ_i$. We have:
\begin{eqnarray*}
Z_iZ_jZ_k
&=&\sum_{a,b,c\ distinct}u_{ia}u_{jb}u_{kc}\otimes z_az_bz_c+\sum_{a\neq c}u_{ia}u_{ja}u_{kc}\otimes z_az_az_c\\
&&+\sum_{a\neq b}u_{ia}u_{jb}u_{ka}\otimes z_az_bz_a+\sum_{a\neq b}u_{ia}u_{jb}u_{kb}\otimes z_az_bz_b\\
&&+\sum_au_{ia}u_{ja}u_{ka}\otimes z_az_az_a
\end{eqnarray*}

The point now is that we can use the half-commutation relations for both the $u$ and the $z$ variables, and we obtain the formula of $Z_kZ_jZ_i$, with the signs in front of the 5 sums being respectively $+-,-+,-+,-+,-+$. Thus we have $Z_iZ_jZ_k=-Z_kZ_jZ_i$.

-- For $i\neq k$ we must have $Z_iZ_iZ_k=Z_kZ_iZ_i$. We have:
\begin{eqnarray*}
Z_iZ_iZ_k
&=&\sum_{a,b,c\ distinct}u_{ia}u_{ib}u_{kc}\otimes z_az_bz_c+\sum_{a\neq c}u_{ia}u_{ia}u_{kc}\otimes z_az_az_c\\
&&+\sum_{a\neq b}u_{ia}u_{ib}u_{ka}\otimes z_az_bz_a+\sum_{a\neq b}u_{ia}u_{ib}u_{kb}\otimes z_az_bz_b\\
&&+\sum_au_{ia}u_{ia}u_{ka}\otimes z_az_az_a
\end{eqnarray*}

Once again, we can use the half-commutation relations for both the $u$ and the $z$ variables, and we obtain the formula of $Z_kZ_iZ_i$, with the signs in front of the 5 sums being respectively $--,++,++,++,++$. Thus we have $Z_iZ_iZ_k=Z_kZ_iZ_i$.

The proof for $\bar{S}^{N-1}_{\mathbb C,**}$ is similar, by adding $*$ exponents where needed.
\end{proof}

Summarizing, the 10 quantum groups that we have constructed here act on the 10 spheres constructed in section 1. Improving Theorem 3.7, with a universality result for the actions constructed there, will be our main goal in what follows.

\section{Schur-Weyl duality}

In order to get more insight into the structure of our 10 spheres, and into the structure of the actions constructed in Theorem 3.7 above, we need a number of new ingredients, and notably the Schur-Weyl theory for the 10 quantum groups. 

As in \cite{bsp}, we use several types of partitions, as follows:

\begin{definition}
We let $P(k,l)$ be the set of partitions between an upper row of $k$ points and a lower row of $l$ points, and consider the following subsets of $P(k,l)$:
\begin{enumerate}
\item $P_2(k,l)\subset P_{even}(k,l)$: the pairings, and the partitions with blocks having even size.

\item $NC_2(k,l)\subset NC_{even}(k,l)\subset NC(k,l)$: the subsets of noncrossing partitions. 

\item $Perm(k,k)\subset P_2(k,k)$: the pairings having only up-to-down strings.
\end{enumerate}
\end{definition}

Observe that the elements of $Perm(k,k)$ correspond to the permutations in $S_k$, with the usual convention that the permutation diagrams act downwards. See \cite{bsp}, \cite{bve}. 

Given a partition $\tau\in P(k,l)$, we call ``switch'' the operation which consists in switching two neighbors, belonging to different blocks, either in the upper row, or in the lower row. By performing a number of such switches, we can always transform $\tau$ into a certain noncrossing partition $\tau'\in NC(k,l)$, having the same block structure as $\tau$.

We will need the following standard result, regarding the behavior of this switching operation, in the particular case of the partitions having even blocks:

\begin{proposition}
There is a signature map $\varepsilon:P_{even}\to\{-1,1\}$, given by $\varepsilon(\tau)=(-1)^c$, where $c$ is the number of switches needed to make $\tau$ noncrossing. In addition:
\begin{enumerate}
\item For $\tau\in Perm(k,k)$, this is the usual signature.

\item For $\tau\in P_2$ we have $(-1)^c$, where $c$ is the number of crossings.

\item For $\tau\leq\pi\in NC_{even}$, the signature is $1$.
\end{enumerate}
\end{proposition}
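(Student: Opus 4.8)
The first task is to check that $\varepsilon$ is well defined, i.e.\ that the parity of the number of switches needed to untangle $\tau$ does not depend on the chosen sequence of switches. The key point is that a switch changes the number of crossings of a partition diagram by $\pm 1$ when the two switched points lie in distinct blocks, but more care is needed for $P_{even}$ than for $P_2$, since a "crossing count" is ambiguous for blocks of size $>2$. The plan is to first prove statement (2), the $P_2$ case, and then reduce the general case to it. For $\tau\in P_2$, I would argue that a single switch of two points in different strings changes the crossing number of the pairing by exactly $\pm 1$: the two strings being switched either gain or lose one mutual crossing, and no other pair of strings is affected. Hence after $c$ switches bringing $\tau$ to a noncrossing (hence crossingless) pairing $\tau'$, we get $0 = (\text{crossings of }\tau) \pm 1 \pm 1 \cdots$, so $(-1)^c = (-1)^{\#\text{crossings}(\tau)}$; in particular $(-1)^c$ is independent of the chosen switching sequence, and $\varepsilon$ is well defined on $P_2$.

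For the general $P_{even}$ case, the natural device is a "doubling" or "fattening" map $P_{even}\to P_2$: replace each block of size $2m$ by $m$ strings in a fixed standard noncrossing way (e.g.\ nest them, or pair consecutive legs of the block). One must check that a switch of two adjacent points of $\tau$ lifts to an odd number of switches — in fact, that one can realize it by an odd number of elementary transpositions of adjacent strands in the doubled picture — so that $\varepsilon(\tau)$ computed via switches of $\tau$ agrees with $\varepsilon$ of the doubled pairing. The cleanest route: define $\varepsilon(\tau)$ directly as $\varepsilon$ of (any) doubling and then show this is computed by switches, by checking that a single switch in $\tau$ multiplies the sign by $-1$. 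Here the block sizes being even is exactly what is needed: sliding a whole block of even size past another adjacent string, or past another even block, crosses it an even number of times relative to anything except the single strand it is "switched" with — so the parity bookkeeping works out to $-1$ per switch. This gives well-definedness of $\varepsilon$ on all of $P_{even}$, and simultaneously yields (2) as the special case where the doubling is the identity.

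Claim (1) is then immediate: for $\tau\in Perm(k,k)\subset P_2(k,k)$ a switch is precisely a transposition of two adjacent strands, the number of crossings of the permutation diagram is the number of inversions, and $(-1)^{\#\text{inversions}}$ is by definition the signature of the permutation; alternatively, untangling by adjacent transpositions is exactly a reduced-word computation, and $\varepsilon$ is multiplicative on such, giving the usual $\mathrm{sgn}$. Claim (3) is the substantive assertion for later use. If $\tau \leq \pi$ with $\pi\in NC_{even}$, then $\tau$ is obtained from $\pi$ by further partitioning each block of $\pi$; since $\pi$ is noncrossing, we may treat each block of $\pi$ separately (strings of $\tau$ inside different blocks of $\pi$ do not cross, because the blocks of $\pi$ are "parallel"). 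So it suffices to show: if all legs of $\tau$ lie in a single interval and $\tau$ refines the one-block partition, then $\varepsilon(\tau)=1$. For this I would induct on the number of blocks of $\tau$: the leftmost leg lies in some block $B$; since the ambient block of $\pi$ has even size and $B$ is one block of an even partition of it, one can slide the elements of $B$ leftward, past the intervening points, to make $B$ occupy an initial segment, using an even number of switches — again the evenness of $|B|$ (and of what is skipped over) forces the parity to be even — and then remove $B$ and recurse. Accumulating these even contributions gives $\varepsilon(\tau)=1$.

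\textbf{Main obstacle.} The delicate point is not any single step but the well-definedness argument for blocks of size $>2$: one must pin down a careful notion of how a switch interacts with multi-point blocks and verify that each switch contributes exactly one sign flip regardless of how the large blocks are arranged. The doubling trick reduces this to the transparent pairing case, but proving that the doubling is compatible with switches (and independent of the chosen standard doubling) is where the real work lies. I expect the even-block hypothesis to enter decisively precisely here and again in part (3), where it guarantees the relevant "sliding" moves are parity-even.
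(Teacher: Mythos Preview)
Your overall strategy for well-definedness --- reducing to $P_2$ via a ``doubling'' map and using the crossing parity as an invariant --- is sound and is genuinely different from the paper's approach. The paper instead puts every partition into an algorithmically defined \emph{standard form} and argues directly that for $\tau\in NC_{even}$ the standard-form algorithm uses an even number of switches; hence any two noncrossing partitions with the same block sizes are an even number of switches apart, which gives well-definedness. Your route has the advantage of producing an explicit invariant that visibly flips under each switch, which is conceptually cleaner; the paper's route avoids having to verify that the doubling interacts correctly with switches. Your treatments of (1) and (2) are fine.

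Part (3), however, contains a real error: you have the order relation backwards. In this paper $\tau\leq\pi$ means $\tau$ is \emph{coarser} than $\pi$ --- this is forced by the formula $T_\pi(e_i)=\sum_{j:\ker(^i_j)\leq\pi}e_j$, where $\ker(^i_j)$ can have arbitrarily large blocks while $\pi$ is a pairing. So in (3), $\tau$ is obtained from the noncrossing even $\pi$ by \emph{merging} blocks, not by refining them. Your argument as written would apply to every $\tau\in P_{even}$ (any such $\tau$ refines a single block), and would ``prove'' $\varepsilon(\tau)=1$ identically; but the basic crossing $\{1,3\},\{2,4\}$ already has $\varepsilon=-1$, and indeed in your sliding picture moving the leg $3$ past $2$ is one switch, not an even number. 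The paper proves (3) by a recurrence on the merging process: reduce to merging two blocks of $\pi$ across a third block, and then induct on the size of that middle block down to a semicircle. Your doubling machinery could in principle be adapted --- when $\tau$ is a coarsening of some $\pi\in NC_{even}$ one can check that the interval-doubling of $\tau$ has an even number of crossings --- but that is a different computation from the one you sketched, and it does not follow from your block-sliding heuristic.
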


\begin{proof}
In order to show that $\varepsilon$ is well-defined, we must prove that the number $c$ in the statement is well-defined modulo 2. It is enough to perform the verification for the noncrossing partitions. More precisely, given $\tau,\tau'\in NC_{even}$ having the same block structure, we must prove that the number of switches $c$ required for the passage $\tau\to\tau'$ is even.

In order to do so, observe that any partition $\tau\in P(k,l)$ can be put in ``standard form'', by ordering its blocks according to the appearence of the first leg in each block, counting clockwise from top left, and then by performing the switches as for block 1 to be at left, then for block 2 to be at left, and so on. Here the required switches are also uniquely determined, by the order coming from counting clockwise from top left. 

Here is an example of such an algorithmic switching operation, with block 1 being first put at left, by using two switches, then with block 2 left unchanged, and then with block 3 being put at left as well, but at right of blocks 1 and 2, with one switch:
$$\xymatrix@R=3mm@C=3mm{\circ\ar@/_/@{.}[drr]&\circ\ar@{-}[dddl]&\circ\ar@{-}[ddd]&\circ\\
&&\ar@/_/@{.}[ur]&\\
&&\ar@/^/@{.}[dr]&\\
\circ&\circ\ar@/^/@{.}[ur]&\circ&\circ}
\xymatrix@R=4mm@C=1mm{&\\\to\\&\\& }
\xymatrix@R=3mm@C=3mm{\circ\ar@/_/@{.}[dr]&\circ\ar@{-}[dddl]&\circ&\circ\ar@{-}[dddl]\\
&\ar@/_/@{.}[ur]&&\\
&&\ar@/^/@{.}[dr]&\\
\circ&\circ\ar@/^/@{.}[ur]&\circ&\circ}
\xymatrix@R=4mm@C=1mm{&\\\to\\&\\&}
\xymatrix@R=3mm@C=3mm{\circ\ar@/_/@{.}[r]&\circ&\circ\ar@{-}[dddll]&\circ\ar@{-}[dddl]\\
&&&\\
&&\ar@/^/@{.}[dr]&\\
\circ&\circ\ar@/^/@{.}[ur]&\circ&\circ}
\xymatrix@R=4mm@C=1mm{&\\\to\\&\\& }
\xymatrix@R=3mm@C=3mm{\circ\ar@/_/@{.}[r]&\circ&\circ\ar@{-}[dddll]&\circ\ar@{-}[dddll]\\
&&&\\
&&&\\
\circ&\circ&\circ\ar@/^/@{.}[r]&\circ}$$

The point now is that, under the assumption $\tau\in NC_{even}(k,l)$, each of the moves required for putting a leg at left, and hence for putting a whole block at left, requires an even number of switches. Thus, putting $\tau$ is standard form requires an even number of switches. Now given $\tau,\tau'\in NC_{even}$ having the same block structure, the standard form coincides, so the number of switches $c$ required for the passage $\tau\to\tau'$ is indeed even.

Regarding now the remaining assertions, these are all elementary:

(1) For $\tau\in Perm(k,k)$ the standard form is $\tau'=id$, and the passage $\tau\to id$ comes by composing with a number of transpositions, which gives the signature. 

(2) For a general $\tau\in P_2$, the standard form is of type $\tau'=|\ldots|^{\cup\ldots\cup}_{\cap\ldots\cap}$, and the passage $\tau\to\tau'$ requires $c$ mod 2 switches, where $c$ is the number of crossings. 

(3) Assuming that $\tau\in P_{even}$ comes from $\pi\in NC_{even}$ by merging a certain number of blocks, we can prove that the signature is 1 by proceeding by recurrence. Indeed, we can first assume that we have only 3 blocks, and then we can further use a recurrence on the number of legs, until we reach to the situation where the block in the middle, which crosses the merged outer blocks, is a semicircle, and where the result is clear. 
\end{proof}

With the above notion in hand, we can formulate:

\begin{definition}
Associated to a pair-partition $\pi\in P_2(k,l)$ are the linear maps
$$\dot{T}_\pi(e_{i_1}\otimes\ldots\otimes e_{i_k})=\sum_{j_1\ldots j_l}\dot{\delta}_\pi\begin{pmatrix}i_1\ldots i_k\\ j_1\ldots j_l\end{pmatrix}e_{j_1}\otimes\ldots\otimes e_{j_l}$$
where $\dot{\delta}_\pi\in\{-1,0,1\}$ is constructed, in terms of $\tau=\ker(^i_j)$, as follows:
\begin{enumerate}
\item In the untwisted case, we set $\delta=1$ if $\tau\leq\pi$, and $\delta=0$ otherwise.

\item In the twisted case, we set $\bar{\delta}=\varepsilon(\tau)$ if $\tau\leq\pi$, and $\bar{\delta}=0$ otherwise.
\end{enumerate}
\end{definition}

In the untwisted case we recognize here the usual Brauer intertwiners for $O_N$, discussed for instance in \cite{bco}, \cite{bsp}, and whose formula is simply:
$$T_\pi(e_{i_1}\otimes\ldots\otimes e_{i_k})=\sum_{j:\ker(^i_j)\leq\pi}e_{j_1}\otimes\ldots\otimes e_{j_l}$$

In the twisted case the formula is similar, but requiring this time some signs, constructed according to Proposition 4.2 above. More precisely, we have:
$$\bar{T}_\pi(e_{i_1}\otimes\ldots\otimes e_{i_k})=\sum_{\tau\leq\pi}\varepsilon(\tau)\sum_{j:\ker(^i_j)=\tau}e_{j_1}\otimes\ldots\otimes e_{j_l}$$

Let us work now out a few basic examples of such linear maps, which are of particular interest for the considerations to follow:

\begin{proposition}
The linear map associated to the basic crossing is:
$$\bar{T}_{\slash\!\!\!\backslash}(e_i\otimes e_j)
=\begin{cases}
-e_j\otimes e_i&{\rm for}\ i\neq j\\
e_j\otimes e_i&{\rm otherwise}
\end{cases}$$

The linear map associated to the half-liberating permutation is:
$$\bar{T}_{\slash\hskip-1.6mm\backslash\hskip-1.1mm|\hskip0.5mm}(e_i\otimes e_j\otimes e_k)
=\begin{cases}
-e_k\otimes e_j\otimes e_i&{\rm for}\ i,j,k\ {\rm distinct}\\
e_k\otimes e_j\otimes e_i&{\rm otherwise}
\end{cases}$$

Also, for any noncrossing pairing $\pi\in NC_2$, we have $\bar{T}_\pi=T_\pi$.
\end{proposition}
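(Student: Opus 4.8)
The plan is to unfold Definition 4.3 for these three diagrams and then to read off each sign from Proposition 4.2. First I would record that the basic crossing and the half-liberating permutation are, in our one-line notation, the transpositions $(21)\in S_2$ and $(321)\in S_3$, hence lie in $Perm\subset P_2$, so that Definition 4.3 applies with the signed coefficients $\bar{\delta}_\pi$. In general, for $\sigma\in Perm(k,k)$ with diagram $\pi$, the requirement $\ker\binom{i_1\dots i_k}{j_1\dots j_k}\leq\pi$ forces $j_q=i_{\sigma^{-1}(q)}$ for every $q$, so $\bar{T}_\sigma(e_{i_1}\otimes\dots\otimes e_{i_k})$ collapses to the single term $\varepsilon(\tau)\,e_{i_{\sigma^{-1}(1)}}\otimes\dots\otimes e_{i_{\sigma^{-1}(k)}}$, with $\tau=\ker\binom{i_1\dots i_k}{i_{\sigma^{-1}(1)}\dots i_{\sigma^{-1}(k)}}$. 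Specializing $\sigma$ to $(21)$ and to $(321)$ already produces the vectors $\pm\,e_j\otimes e_i$ and $\pm\,e_k\otimes e_j\otimes e_i$ of the statement, so everything reduces to evaluating $\varepsilon(\tau)$.

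Next I would split into cases according to which of the indices coincide. If all indices are distinct, then $\tau$ is $\sigma$ itself regarded as a pair-partition, so $\varepsilon(\tau)=\operatorname{sgn}(\sigma)=-1$ by Proposition 4.2(1), both $(21)$ and $(321)$ being transpositions. If exactly two indices coincide, then $\tau$ acquires a block of size $\geq4$, and the claim is $\varepsilon(\tau)=1$: in each of the finitely many sub-cases one exhibits a noncrossing pairing $\pi'\in NC_2\subset NC_{even}$ with $\tau\leq\pi'$ and applies Proposition 4.2(3). For instance, for the length-three word with the first two letters equal one may pair upper-$1$ with upper-$2$, lower-$2$ with lower-$3$, and upper-$3$ with lower-$1$, and then recover $\tau$ by merging two of these blocks; the cases of the other coinciding pairs (and the single coinciding pair for the length-two word) are handled by the analogous choices of $\pi'$. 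If all indices agree, $\tau$ is a single block, hence noncrossing, so $\varepsilon(\tau)=1$ trivially. Combining these values yields the first two displayed formulas.

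The last assertion is then immediate: if $\pi\in NC_2$, then every kernel $\tau=\ker\binom{i}{j}$ occurring with nonzero coefficient satisfies $\tau\leq\pi\in NC_{even}$, so $\varepsilon(\tau)=1$ by Proposition 4.2(3); hence the signed coefficients $\bar{\delta}_\pi$ all reduce to the plain coefficients $\delta_\pi$, i.e.\ $\bar{T}_\pi=T_\pi$.

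I expect the only real work to be the middle step, namely showing that a repeated index forces $\varepsilon(\tau)=1$, i.e.\ that a block of size $\geq4$ absorbs the crossing sign. This needs either naming the dominating noncrossing pairing $\pi'$ above $\tau$, or, equivalently, counting switches and checking the count is even; since the words have length at most three there are only a handful of partitions to inspect, so it is routine, but it is the one place where the bookkeeping of Proposition 4.2 has to be used rather than merely quoted.
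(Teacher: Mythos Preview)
Your proposal is correct and follows essentially the same route as the paper: unfold Definition 4.3, reduce to computing $\varepsilon(\tau)$ case by case, and read off the signs from Proposition 4.2. The only cosmetic difference is that for the repeated-index cases the paper counts switches directly (two switches, hence $\varepsilon=1$), whereas you invoke Proposition 4.2(3) by exhibiting a dominating $\pi'\in NC_2$; these are equivalent verifications of the same fact.
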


\begin{proof}
We have to compute the signature of the various partitions involved, and we can use here (1,2,3) in Proposition 4.2. We make the convention that the strings which cross and which are of the same type (e.g. dotted) correspond to the same block.

Regarding the basic crossing and its collapsed version, the signatures are:
$$\xymatrix@R=10mm@C=5mm{\circ\ar@{-}[dr]&\circ\ar@{.}[dl]\\
\circ&\circ}
\xymatrix@R=4mm@C=1mm{&\\\to -1\\&\\& }\qquad\qquad
\xymatrix@R=10mm@C=5mm{\circ\ar@{-}[dr]&\circ\ar@{-}[dl]\\
\circ&\circ}
\xymatrix@R=4mm@C=1mm{&\\\to 1\\&\\& }$$

But this gives the first formula in the statement. Regarding now the second formula, this follows from the following signature computations, obtained by counting the crossings (in the first case), by switching twice as to put the partition in noncrossing form (in the next 3 cases), and by observing that the partition is noncrossing (in the last case):
$$\xymatrix@R=10mm@C=5mm{\circ\ar@/_/@{-}[drr]&\circ\ar@/^/@{.}[d]&\circ\ar@/_/@{~}[dll]\\
\circ&\circ&\circ}
\xymatrix@R=4mm@C=1mm{&\\\to -1\\&\\& }\qquad\qquad
\xymatrix@R=10mm@C=5mm{\circ\ar@/_/@{-}[drr]&\circ\ar@/^/@{.}[d]&\circ\ar@/_/@{-}[dll]\\
\circ&\circ&\circ}
\xymatrix@R=4mm@C=1mm{&\\\to 1\\&\\& }$$
$$\xymatrix@R=10mm@C=5mm{\circ\ar@/_/@{-}[drr]&\circ\ar@/^/@{-}[d]&\circ\ar@/_/@{~}[dll]\\
\circ&\circ&\circ}
\xymatrix@R=4mm@C=1mm{&\\\to 1\\&\\& }\qquad\qquad
\xymatrix@R=10mm@C=5mm{\circ\ar@/_/@{-}[drr]&\circ\ar@/^/@{.}[d]&\circ\ar@/_/@{.}[dll]\\
\circ&\circ&\circ}
\xymatrix@R=4mm@C=1mm{&\\\to 1\\&\\& }\qquad\qquad
\xymatrix@R=10mm@C=5mm{\circ\ar@/_/@{-}[drr]&\circ\ar@/^/@{-}[d]&\circ\ar@/_/@{-}[dll]\\
\circ&\circ&\circ}
\xymatrix@R=4mm@C=1mm{&\\\to 1\\&\\& }$$

Finally, the last assertion follows from Proposition 4.2 (3).
\end{proof}

The relation with the 10 quantum groups comes from:

\begin{proposition}
For an orthogonal quantum group $G$, the following hold:
\begin{enumerate}
\item $\dot{T}_{\slash\!\!\!\backslash}\in End(u^{\otimes 2})$ precisely when $G\subset\dot{O}_N$.

\item $\dot{T}_{\slash\hskip-1.6mm\backslash\hskip-1.1mm|\hskip0.5mm}\in End(u^{\otimes 3})$ precisely when $G\subset\dot{O}_N^*$.
\end{enumerate}
\end{proposition}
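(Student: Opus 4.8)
The plan is to use the standard Tannaka--Galois / Schur--Weyl dictionary: for a closed quantum subgroup $G\subset\dot{O}_N^+$, an element $\xi$ belongs to the intertwiner space $End(u^{\otimes k})$ if and only if the relation $\xi\in End(u^{\otimes k})$, written out in terms of the entries $u_{ij}$, is compatible with the defining relations of $C(G)$. So for each of the two statements I would write out explicitly what $\dot T_\pi\in End(u^{\otimes k})$ means as a relation among the $u_{ij}$, and match it against the monomial relations defining $\dot O_N$ and $\dot O_N^*$.

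For part (1), the condition $\dot T_{\slash\!\!\!\backslash}\,u^{\otimes 2}=u^{\otimes 2}\,\dot T_{\slash\!\!\!\backslash}$ reads, after contracting against basis vectors, as
\begin{eqnarray*}
\sum_{ab}(\dot T_{\slash\!\!\!\backslash})_{ij,ab}\,u_{ak}u_{bl}&=&\sum_{cd}u_{ic}u_{jd}\,(\dot T_{\slash\!\!\!\backslash})_{cd,kl}.
\end{eqnarray*}
Using the formula for $\dot T_{\slash\!\!\!\backslash}$ from Proposition 4.4 --- in the untwisted case $T_{\slash\!\!\!\backslash}(e_i\otimes e_j)=e_j\otimes e_i$, in the twisted case with the sign $-1$ when $i\neq j$ --- the left side becomes $u_{jk}u_{il}$ (resp.\ with a sign depending on $i=j$ or not), and the right side becomes $u_{il}u_{jk}$ (resp.\ with a sign depending on $k=l$). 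Comparing, one sees that the intertwining relation is exactly the commutation relation $u_{il}u_{jk}=u_{jk}u_{il}$ in the untwisted case and, in the twisted case, the relation $u_{il}u_{jk}=\pm u_{jk}u_{il}$ with the sign $-1$ precisely when the two entries share a row or a column --- i.e.\ exactly the defining relations of $\dot O_N$ from the classical definition and from Proposition 3.3. So $\dot T_{\slash\!\!\!\backslash}\in End(u^{\otimes2})$ holds iff $C(G)$ is a quotient of $C(\dot O_N)$, which is the claim.

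For part (2) the computation is the same in spirit, one dimension up: writing out $\dot T_{\slash\hskip-1.6mm\backslash\hskip-1.1mm|}\,u^{\otimes3}=u^{\otimes3}\,\dot T_{\slash\hskip-1.6mm\backslash\hskip-1.1mm|}$ and using the explicit formula for $\dot T_{\slash\hskip-1.6mm\backslash\hskip-1.1mm|}$ from Proposition 4.4 (which sends $e_i\otimes e_j\otimes e_k\mapsto \pm\, e_k\otimes e_j\otimes e_i$, with sign $-1$ exactly when $i,j,k$ are pairwise distinct), one finds the intertwining relation is $u_{i_1a}u_{i_2b}u_{i_3c}=\pm\,u_{i_3a}u_{i_2b}u_{i_1c}$ with the twisted sign governed by the number of distinct indices among $(i_1,i_2,i_3)$ and among $(a,b,c)$. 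The bookkeeping here is exactly the $(r,c)$-span analysis already carried out in the proof of Proposition 3.4; the untwisted version collapses this to the plain half-commutation $abc=cba$ defining $\dot O_N^*$ (Definition 3.2). So $\dot T_{\slash\hskip-1.6mm\backslash\hskip-1.1mm|}\in End(u^{\otimes3})$ iff $C(G)$ is a quotient of $C(\dot O_N^*)$.

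The one genuinely delicate point --- and the step I expect to be the main obstacle --- is not the linear-algebra contraction, which is routine, but verifying that the sign pattern produced by $\varepsilon(\ker(^i_j))$ in the twisted $\dot T_\pi$ matches, \emph{case by case}, the row/column span sign rules in Proposition 3.3 and Proposition 3.4. In other words, one has to be careful that ``$\tau=\ker$ of the multi-index, with $\varepsilon(\tau)$ as in Proposition 4.2'' really does reproduce, after contracting with $u^{\otimes k}$, precisely the $\pm$ conventions chosen when defining $\bar O_N$ and $\bar O_N^*$. This is a finite check (two diagrams, a handful of degeneracy cases each), but it is where a sign error would hide, so I would do it explicitly, leaning on the fact that in both cases one inclusion direction is automatic (the generating diagram itself satisfies the span/kernel condition) and the other follows because the monomial relations from the diagram generate all the others, exactly as in Proposition 2.4.
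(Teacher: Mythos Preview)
Your approach is essentially the same as the paper's: write out the intertwining condition $\dot T_\pi u^{\otimes k}=u^{\otimes k}\dot T_\pi$ in coordinates using the explicit formulae for $\dot T_\pi$ from Proposition~4.4, and then read off the resulting monomial relations among the $u_{ij}$, checking that they coincide with the defining relations of $\dot O_N$ (Proposition~3.3) and $\dot O_N^*$ (Proposition~3.4). The paper carries this out in exactly this way, doing the case split on $i=j$ versus $i\neq j$ for part~(1) and on ``$i,j,k$ distinct'' versus ``not distinct'' for part~(2), and matching the resulting $\pm$ against the $(r,c)$-span table.

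One small correction: in your part~(2), the relation you obtain from the contraction is not $u_{i_1a}u_{i_2b}u_{i_3c}=\pm u_{i_3a}u_{i_2b}u_{i_1c}$ but rather $u_{i_1a}u_{i_2b}u_{i_3c}=\pm u_{i_3c}u_{i_2b}u_{i_1a}$ (both the row and column indices reverse, since the permutation $(321)$ acts on the tensor factors, not just on one index). With that fix, the sign you extract is $\varepsilon(i_1,i_2,i_3)\cdot\varepsilon(a,b,c)$, which indeed reproduces the $(r,c)$-span sign table of Proposition~3.4. Also, your closing remark about one direction being ``automatic'' via Proposition~2.4 is unnecessary: both directions of the equivalence fall out of the same coordinate computation, since the intertwining relation and the quantum-group relations are literally the same set of equalities among the $u_{ij}$.
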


\begin{proof}
These results are well-known in the untwisted case, see \cite{bsp}, \cite{bve}.

(1) By using the formula of $\bar{T}_{\slash\!\!\!\backslash}$ in Proposition 4.4, we obtain:
$$(\bar{T}_{\slash\!\!\!\backslash}\otimes1)u^{\otimes 2}(e_i\otimes e_j\otimes1)=\sum_ke_k\otimes e_k\otimes u_{ki}u_{kj}-\sum_{k\neq l}e_l\otimes e_k\otimes u_{ki}u_{lj}$$
$$u^{\otimes 2}(\bar{T}_{\slash\!\!\!\backslash}\otimes1)(e_i\otimes e_j\otimes1)=\begin{cases}
\sum_{kl}e_l\otimes e_k\otimes u_{li}u_{ki}&{\rm if}\ i=j\\
-\sum_{kl}e_l\otimes e_k\otimes u_{lj}u_{ki}&{\rm if}\ i\neq j
\end{cases}$$

For $i=j$ the conditions are $u_{ki}^2=u_{ki}^2$ for any $k$, and $u_{ki}u_{li}=-u_{li}u_{ki}$ for any $k\neq l$. For $i\neq j$ the conditions are $u_{ki}u_{kj}=-u_{kj}u_{ki}$ for any $k$, and $u_{ki}u_{lj}=u_{lj}u_{ki}$ for any $k\neq l$. Thus we have exactly the relations between the coordinates of $\bar{O}_N$, and we are done.

(2) By using the formula of $\bar{T}_{\slash\hskip-1.6mm\backslash\hskip-1.1mm|\hskip0.5mm}$ in Proposition 4.4, we obtain:
\begin{eqnarray*}
(\bar{T}_{\slash\hskip-1.6mm\backslash\hskip-1.1mm|\hskip0.5mm}\otimes1)u^{\otimes 2}(e_i\otimes e_j\otimes e_k\otimes1)
&=&\sum_{abc\ not\ distinct}e_c\otimes e_b\otimes e_a\otimes u_{ai}u_{bj}u_{ck}\\
&-&\sum_{a,b,c\ distinct}e_c\otimes e_b\otimes e_a\otimes u_{ai}u_{bj}u_{ck}
\end{eqnarray*}

On the other hand, we have as well the following formula:
\begin{eqnarray*}
&&u^{\otimes 2}(\bar{T}_{\slash\hskip-1.6mm\backslash\hskip-1.1mm|\hskip0.5mm}\otimes1)(e_i\otimes e_j\otimes e_k\otimes1)\\
&&\ \ \ \ \ \ =\begin{cases}
\sum_{abc}e_c\otimes e_b\otimes e_a\otimes u_{ck}u_{bj}u_{ai}&{\rm for}\ i,j,k\ {\rm not\ distinct}\\
-\sum_{abc}e_c\otimes e_b\otimes e_a\otimes u_{ck}u_{bj}u_{ai}&{\rm for}\ i,j,k\ {\rm distinct}
\end{cases}
\end{eqnarray*}

For $i,j,k$ not distinct the conditions are $u_{ai}u_{bj}u_{ck}=u_{ck}u_{bj}u_{ai}$ for $a,b,c$ not distinct, and $u_{ai}u_{bj}u_{ck}=-u_{ck}u_{bj}u_{ai}$ for $a,b,c$ distinct. For $i,j,k$ distinct the conditions are $u_{ai}u_{bj}u_{ck}=-u_{ck}u_{bj}u_{ai}$ for $a,b,c$ not distinct, and $u_{ai}u_{bj}u_{ck}=u_{ck}u_{bj}u_{ai}$ for $a,b,c$ distinct. Thus we have exactly the relations between the coordinates of $\bar{O}_N^*$, and we are done.
\end{proof}

We prove now that the usual categorical operations on the linear maps $\dot{T}_\pi$, namely the composition, tensor product and conjugation, are compatible with the usual categorical operations on the partitions from \cite{bsp}, namely the composition $(\pi,\sigma)\to[^\sigma_\pi]$, the horizontal concatenation $(\pi,\sigma)\to[\pi\sigma]$, and the upside-down turning $\pi\to\pi^*$. We have:

\begin{proposition}
The assignement $\pi\to\dot{T}_\pi$ is categorical, in the sense that
$$\dot{T}_\pi\otimes\dot{T}_\sigma=\dot{T}_{[\pi\sigma]},\qquad\dot{T}_\pi \dot{T}_\sigma=N^{c(\pi,\sigma)}\dot{T}_{[^\sigma_\pi]},\qquad\dot{T}_\pi^*=\dot{T}_{\pi^*}$$
where $c(\pi,\sigma)$ are certain positive integers.
\end{proposition}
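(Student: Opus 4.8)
The plan is to follow the classical strategy for proving categorical properties of Brauer-type diagram calculi, adapting it to carry the signs $\varepsilon(\tau)$ in the twisted case. In the untwisted case all three identities are entirely standard (see \cite{bsp}, \cite{bco}), so the real work is the twisted case, where we must track how the signature map behaves under the three categorical operations on partitions.

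First I would handle the tensor product identity $\bar{T}_\pi\otimes\bar{T}_\sigma=\bar{T}_{[\pi\sigma]}$. Evaluating both sides on a basis vector $e_{i_1}\otimes\cdots$, the coefficient of a given output vector is a product of two Kronecker-type symbols on the left and a single one on the right, and the block structures obviously match: $\ker\binom{i}{j}$ for the concatenated data is the ``disjoint union'' of the two kernels. So the only thing to check is the multiplicativity of the signature, $\varepsilon(\tau\sqcup\tau')=\varepsilon(\tau)\varepsilon(\tau')$ for the horizontal juxtaposition of two partitions in $P_{even}$. This is immediate from the definition $\varepsilon(\tau)=(-1)^{c}$ with $c$ the number of switches needed to reach noncrossing form: switches inside $\tau$ and switches inside $\tau'$ are independent, and no switch is needed to uncross strings of $\tau$ from strings of $\tau'$, since they occupy disjoint ranges of legs. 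Hence the exponents add.

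Next, the conjugation identity $\bar{T}_\pi^*=\bar{T}_{\pi^*}$. Here one computes the matrix entries of the adjoint directly: the $(j,i)$ entry of $\bar{T}_\pi^*$ is the complex conjugate of the $(i,j)$ entry of $\bar{T}_\pi$, which is real (it lies in $\{-1,0,1\}$), equal to $\varepsilon(\ker\binom{i}{j})$ when $\ker\binom{i}{j}\leq\pi$ and $0$ otherwise. On the other hand, $\pi^*$ is $\pi$ with the two rows interchanged, so $\ker\binom{j}{i}\leq\pi^*$ iff $\ker\binom{i}{j}\leq\pi$, and the kernel partition is literally the upside-down turning of the original kernel. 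Thus it remains only to observe that $\varepsilon$ is invariant under upside-down turning, which is clear: a sequence of switches realizing a noncrossing form for $\tau$ becomes, after reflecting, a sequence of the same length realizing a noncrossing form for $\tau^*$.

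Finally, the composition identity $\bar{T}_\pi\bar{T}_\sigma=N^{c(\pi,\sigma)}\bar{T}_{[^\sigma_\pi]}$, which I expect to be the main obstacle. The count $c(\pi,\sigma)$ of loops closed in the middle row is exactly the same as in the untwisted case, and the combinatorial identity $\{\text{middle index matches both }\pi\text{ and }\sigma\}$ forcing the vertical composition $[^\sigma_\pi]$ is standard; so once again the issue is purely a sign bookkeeping problem. One must show that for index data $i$ on top, $k$ on bottom, summing over the middle index $j$ with the constraint $\ker\binom{i}{j}\leq\pi$ and $\ker\binom{j}{k}\leq\sigma$, the product $\varepsilon(\ker\binom{i}{j})\,\varepsilon(\ker\binom{j}{k})$ equals $\varepsilon(\ker\binom{i}{k})$ for the unique surviving (loop-free) coupling, and that the ``$\pm$''s coming from different middle fillings that close the same loops all agree. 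The cleanest way is to reduce to the pairing case using Proposition 4.2(2) (signature $=(-1)^{\#\text{crossings}}$): picture $\pi$ stacked on $\sigma$ as a planar diagram; the number of crossings of the composite picture is the number of crossings of $\pi$ plus that of $\sigma$ plus the crossings involving the closed middle loops, but each closed loop contributes an even number of crossings to the total, so modulo $2$ the crossing count is additive — giving exactly $\varepsilon(\pi)\varepsilon(\sigma)=\varepsilon([^\sigma_\pi])$ after the loops are removed. For general even partitions one first writes $\tau\leq\pi$ with $\pi$ obtained from a noncrossing partition by block-merging, uses Proposition 4.2(3) to reduce signs to the pairing skeleton, and then invokes the pairing computation. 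The delicate point to get right is that different middle index assignments $j$ compatible with a fixed pattern of closed loops produce the \emph{same} sign; this follows because changing such a $j$ only permutes which concrete values run around a loop, which does not change any kernel partition appearing in the $\varepsilon$'s, hence does not change the sign — it only contributes the factor $N$ per loop, accounted for in $N^{c(\pi,\sigma)}$.
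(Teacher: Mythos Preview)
Your involution argument is fine, and your concatenation argument is nearly right, though the claim that the kernel of the concatenated data is the ``disjoint union'' of the two kernels is not literally true: when an index on the $\pi$-side happens to equal one on the $\sigma$-side, blocks merge across the boundary, so in general one only has $\rho\leq[\tau\nu]$. This is a minor gap, repaired exactly as the paper does it (switch $\tau,\nu$ to noncrossing form; then $\rho'\leq[\tau'\nu']\in NC_{even}$ and Proposition~4.2(3) gives $\varepsilon(\rho')=1$).

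The real gap is in the composition step. You correctly identify the target identity $\varepsilon(\ker\binom{i}{j})\,\varepsilon(\ker\binom{j}{k})=\varepsilon(\ker\binom{i}{k})$, but your crossing-count argument then proves $\varepsilon(\pi)\varepsilon(\sigma)=\varepsilon([^\sigma_\pi])$ for the \emph{pairings} $\pi,\sigma$, which is a different statement. The kernel partitions $\tau,\nu,\rho$ are strictly coarser than $\pi,\sigma,[^\sigma_\pi]$ in general, and there is no relation between $\varepsilon(\tau)$ and $\varepsilon(\pi)$ when $\pi\notin NC_{even}$ (e.g.\ the one-block partition on four points has signature $+1$, while the crossing pairing refining it has signature $-1$). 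Your reduction sentence (``$\pi$ obtained from a noncrossing partition by block-merging, use Proposition~4.2(3)\ldots'') does not apply, since $\pi$ is an arbitrary pairing. The paper avoids this by working directly with the kernels: one first switches the shared middle row so that $\ker(j)$ becomes noncrossing (each such switch flips both $\varepsilon(\tau)$ and $\varepsilon(\nu)$, leaving their product and $\varepsilon(\rho)$ unchanged), then switches the top row of $\tau$ and the bottom row of $\nu$ (each such switch flips one of $\varepsilon(\tau),\varepsilon(\nu)$ and simultaneously $\varepsilon(\rho)$, since top/bottom same-block-ness is governed by the $i$/$k$ indices alone). After this $\tau',\nu'\in NC_{even}$, hence $\varepsilon(\tau')=\varepsilon(\nu')=1$, and $\rho'\leq[^{\tau'}_{\nu'}]\in NC_{even}$ gives $\varepsilon(\rho')=1$ by Proposition~4.2(3); since the ratio $\varepsilon(\tau)\varepsilon(\nu)/\varepsilon(\rho)$ was preserved throughout, it equals $1$.
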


\begin{proof}
By using the definition of $\pi\to\dot{T}_\pi$, we just have to understand the behaviour of the generalized Kronecker symbol construction $\pi\to\dot{\delta}_\pi$, under the various categorical operations on the partitions $\pi$. We have to check three conditions, as follows:

\underline{1. Concatenation.} In the untwisted case, this follows from the following formula:
$$\delta_\pi\begin{pmatrix}i_1\ldots i_p\\ j_1\ldots j_q\end{pmatrix}
\delta_\sigma\begin{pmatrix}k_1\ldots k_r\\ l_1\ldots l_s\end{pmatrix}
=\delta_{[\pi\sigma]}\begin{pmatrix}i_1\ldots i_p&k_1\ldots k_r\\ j_1\ldots j_q&l_1\ldots l_s\end{pmatrix}$$

In the twisted case, it is enough to check the following formula:
$$\varepsilon\left(\ker\begin{pmatrix}i_1\ldots i_p\\ j_1\ldots j_q\end{pmatrix}\right)
\varepsilon\left(\ker\begin{pmatrix}k_1\ldots k_r\\ l_1\ldots l_s\end{pmatrix}\right)=
\varepsilon\left(\ker\begin{pmatrix}i_1\ldots i_p&k_1\ldots k_r\\ j_1\ldots j_q&l_1\ldots l_s\end{pmatrix}\right)$$

Let us denote by $\tau,\nu$ the partitions on the left, so that the partition on the right is of the form $\rho\leq[\tau\nu]$. Now by switching to the noncrossing form, $\tau\to\tau'$ and $\nu\to\nu'$, the partition on the right transforms into $\rho\to\rho'\leq[\tau'\nu']$. Now since $[\tau'\nu']$ is noncrossing, we can use Proposition 4.2 (3), and we obtain the result.

\underline{2. Composition.} In the untwisted case, this follows from the following formula from \cite{bsp}, where $c(\pi,\sigma)$ is the number of closed loops obtained when composing:
$$\sum_{j_1\ldots j_q}\delta_\pi\begin{pmatrix}i_1\ldots i_p\\ j_1\ldots j_q\end{pmatrix}
\delta_\sigma\begin{pmatrix}j_1\ldots j_q\\ k_1\ldots k_r\end{pmatrix}
=N^{c(\pi,\sigma)}\delta_{[^\pi_\sigma]}\begin{pmatrix}i_1\ldots i_p\\ k_1\ldots k_r\end{pmatrix}$$

In order to prove now the result in the twisted case, it is enough to check that the signs match. More precisely, we must establish the following formula:
$$\varepsilon\left(\ker\begin{pmatrix}i_1\ldots i_p\\ j_1\ldots j_q\end{pmatrix}\right)
\varepsilon\left(\ker\begin{pmatrix}j_1\ldots j_q\\ k_1\ldots k_r\end{pmatrix}\right)
=\varepsilon\left(\ker\begin{pmatrix}i_1\ldots i_p\\ k_1\ldots k_r\end{pmatrix}\right)$$

Let $\tau,\nu$ be the partitions on the left, so that the partition on the right is of the form $\rho\leq[^\tau_\nu]$. Our claim is that we can jointly switch $\tau,\nu$ to the noncrossing form. Indeed, we can first switch as for $\ker(j_1\ldots j_q)$ to become noncrossing, and then switch the upper legs of $\tau$, and the lower legs of $\nu$, as for both these partitions to become noncrossing. 

Now observe that when switching in this way to the noncrossing form, $\tau\to\tau'$ and $\nu\to\nu'$, the partition on the right transforms into $\rho\to\rho'\leq[^{\tau'}_{\nu'}]$. Now since $[^{\tau'}_{\nu'}]$ is noncrossing, we can apply Proposition 4.2 (3), and we obtain the result.

\underline{3. Involution.} Here we must prove the following formula:
$$\dot{\delta}_\pi\begin{pmatrix}i_1\ldots i_p\\ j_1\ldots j_q\end{pmatrix}=\dot{\delta}_{\pi^*}\begin{pmatrix}j_1\ldots j_q\\ i_1\ldots i_p\end{pmatrix}$$

But this is clear, both in the untwisted and twisted cases, and we are done.
\end{proof}

In order to formulate the duality result, we use words $\alpha,\beta,\ldots$ over the symbols $u,\bar{u}$. Given such a word $\alpha$, we denote by $u^{\otimes\alpha}$ the corepresentation obtained by performing the corresponding tensor product, by inserting $\otimes$ signs between the $u,\bar{u}$ symbols.
 
Also, we denote by $P_2^*\subset P_2$ the set of pairings having the property that when labelling cyclically the legs $\bullet\circ\bullet\circ\ldots$, each string joins a black leg to a white leg. See \cite{bve}.

With these conventions, the Schur-Weyl duality result is as follows:

\begin{theorem}
We have $Hom(u^{\otimes\alpha},u^{\otimes\beta})=span(\dot{T}_\pi|\pi\in P_\times(\alpha,\beta))$, where the sets of diagrams for the the $10$ quantum groups, with inclusions between them, are
$$\xymatrix@R=12mm@C=12mm{
\mathcal P_2\ar[d]&\mathcal P_2^*\ar[l]\ar[d]&\mathcal N\mathcal C_2\ar[l]\ar[d]\ar@.[r]&\mathcal P_2^*\ar@.[r]\ar@.[d]&\mathcal P_2\ar@.[d]\\
P_2&P_2^*\ar[l]&NC_2\ar[l]\ar@.[r]&P_2^*\ar@.[r]&P_2}$$
with the convention $P_\times(\alpha,\beta)=P_\times(|\alpha|,|\beta|)$, where $|.|$ is the word length, and where the upper subsets $\mathcal P_\times(\alpha,\beta)\subset P_\times(\alpha,\beta)$ consist of partitions with strings joining $u,\bar{u}$.
\end{theorem}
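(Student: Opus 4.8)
The plan is to establish the Schur--Weyl duality by Tannakian reconstruction, in the style of \cite{bsp}, \cite{bve}, reducing everything to the identification of the relevant categories of partitions. The starting point is that each set of diagrams $P_\times$ listed in the statement is a \emph{tensor category of partitions} in the appropriate sense: it is stable under horizontal concatenation, vertical composition, and upside-down turning, and contains the identity pairing and the ``cup/cap'' $\cup\in P_2(0,2)$ (for $P_2$, $P_2^*$, $NC_2$ in the orthogonal case; in the unitary case one keeps track of the $u,\bar u$ colouring, which is exactly the role of the upper subsets $\mathcal P_\times(\alpha,\beta)$). By Proposition 4.7 the assignment $\pi\mapsto\dot T_\pi$ is categorical, so $span(\dot T_\pi\,|\,\pi\in P_\times(\alpha,\beta))$ is a concrete tensor category of linear maps; Woronowicz's Tannakian duality (see \cite{wo2}, \cite{ntu}) then produces a unique compact quantum group $G_\times$ with $Hom(u^{\otimes\alpha},u^{\otimes\beta})=span(\dot T_\pi\,|\,\pi\in P_\times(\alpha,\beta))$. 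It remains to identify $G_\times$ with the corresponding one of our $10$ quantum groups.

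The identification proceeds by matching generators and relations on both sides. First I would note that $NC_2$ (with $\bar T_\pi=T_\pi$ by Proposition 4.4) gives precisely $O_N^+$, and its coloured version $\mathcal N\mathcal C_2$ gives $U_N^+$: this is the Brauer-type presentation of the free (unitary) orthogonal quantum groups, essentially due to \cite{bco}, and nothing twisting-related intervenes since all diagrams are noncrossing. Next, adding the basic crossing $\slash\!\!\!\backslash$ to $NC_2$ generates $P_2$ in the untwisted case, and $\varepsilon$-decorated $\mathcal P_2$ in the twisted case; by Proposition 4.6(1) the condition $\dot T_{\slash\!\!\!\backslash}\in End(u^{\otimes 2})$ is equivalent to $G\subset\dot O_N$, so the category $P_2$ (resp. twisted $\mathcal P_2$) corresponds to $O_N/\bar O_N$ (resp. $U_N/\bar U_N$ after colouring). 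Similarly, adjoining the half-liberating permutation $\slash\hskip-1.6mm\backslash\hskip-1.1mm|\hskip0.5mm$ to $NC_2$ generates $P_2^*$, and by Proposition 4.6(2) this corresponds to $\dot O_N^*$ (resp. $\dot U_N^{**}$). The inclusions in the displayed diagram of categories are then immediate from $NC_2\subset P_2^*\subset P_2$ and the colouring inclusions, and by Tannakian duality they dualize to the inclusions of quantum groups in Proposition 3.5.

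The only genuinely delicate point is the \emph{twisted} case: one must verify that the $\varepsilon$-signed maps $\bar T_\pi$ generate, under the categorical operations, exactly the same span as the full signed category $\mathcal P_2$ (resp. $P_2^*$), with no extra loops or sign inconsistencies. Here the key input is Proposition 4.7 together with Proposition 4.2(3): whenever a composition or concatenation produces a partition dominated by a \emph{noncrossing} partition, the signature is forced to be $1$, which is exactly what makes the signed category close up consistently. Concretely, I would check that $span(\bar T_\pi\,|\,\pi\in\langle\slash\!\!\!\backslash\rangle)$ coincides with $span(\bar T_\pi\,|\,\pi\in P_2)$ by a direct diagrammatic argument (every $\pi\in P_2$ is obtained from the crossing and $NC_2$ by concatenation/composition, and the signs track correctly by Proposition 4.2(1)--(2)), and similarly for $P_2^*$ using the black-to-white characterization from Proposition 2.4. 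Once the categories are matched, one reads off, via Proposition 4.6 and the relation-level computations in Propositions 3.3 and 3.4, that $G_\times$ has precisely the defining relations of the corresponding quantum group among $O_N,O_N^*,O_N^+,\bar O_N^*,\bar O_N$ and their unitary analogues, which completes the proof. The main obstacle, then, is not any single hard computation but the bookkeeping required to show the signed partition categories are closed and coincide with the expected ones; everything else is an application of Tannakian duality and the earlier propositions.
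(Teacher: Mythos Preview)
Your proposal is correct and follows essentially the same route as the paper's proof: Tannakian duality applied to the tensor category generated by the $\dot T_\pi$, with the identification of the resulting quantum group carried out via the intertwiner characterizations of $\dot O_N$ and $\dot O_N^*$ and the known free/untwisted cases. The only slips are in your cross-references (what you call Proposition~4.7 is Proposition~4.6, and your Proposition~4.6(1),(2) are Proposition~4.5(1),(2)), and the phrase ``$\varepsilon$-decorated $\mathcal P_2$'' is a bit loose---the partition categories themselves are identical in the twisted and untwisted cases, with the twist living entirely in the choice of $T_\pi$ versus $\bar T_\pi$.
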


\begin{proof}
In the real untwisted case, all the diagrams are already known, see \cite{bsp}, \cite{bve}. In the complex untwisted case, the proof is similar. In the twisted case now, the result for $\bar{O}_N$ follows as in \cite{bco}, by using Proposition 4.5 (1), Proposition 4.6, and Tannakian duality \cite{wo2}. For the other twisted quantum groups, the result follows by functoriality, as in \cite{bsp}, \cite{bve}, by using Proposition 4.5, and by adding $*$ exponents where needed.
\end{proof}

\section{Affine isometries}

In this section we go back to Theorem 3.7, and improve the result found there. 

It is known from \cite{bg1} that proving universality results for quantum group actions requires a good knowledge of the linear relations satisfied by the various products of coordinates. And we can deal now with such problems, by using Schur-Weyl duality.

We will need the Weingarten integration formula. We begin with:

\begin{definition}
Let $P_2(l)=P_2(0,l)$. For $\pi\in P_2(l)$ we set:
$$\dot{\xi}_\pi=\sum_{j_1\ldots j_l}\dot{\delta}_\pi(j_1\ldots j_l)e_{j_1}\otimes\ldots\otimes e_{j_l}$$
In other words, we denote by $\dot{\xi}_\pi$ the vector $\dot{T}_\pi$ constructed in Definition 4.3.
\end{definition}

In the classical case, we recognize the usual Brauer fixed vectors for $O_N$. In the twisted case, the formula is similar, this time making appear some signatures:
$$\bar{\xi}_\pi=\sum_{\tau\leq\pi}\varepsilon(\tau)\sum_{j:\ker j=\tau}e_{j_1}\otimes\ldots\otimes e_{j_l}$$

Here are a few examples of such vectors, coming from the computations in Proposition 4.4 above. First, the vector associated to the basic crossing is:
$$\bar{\xi}_{\cap\hskip-1.2mm\cap}=\sum_ie_i\otimes e_i\otimes e_i\otimes e_i-\sum_{i\neq j}e_i\otimes e_j\otimes e_i\otimes e_j$$

Also, the vector associated to the half-liberating pairing $(123123)$ is:
$$\bar{\xi}_{\cap\hskip-1.4mm\cap\hskip-1.5mm\cap}
=\sum_{ijk\ not\ distinct}e_i\otimes e_j\otimes e_k\otimes e_i\otimes e_j\otimes e_k-\sum_{i,j,k\ distinct}e_i\otimes e_j\otimes e_k\otimes e_i\otimes e_j\otimes e_k$$

Finally, observe that for any noncrossing pairing $\pi\in NC_2(l)$, we have $\bar{\xi}_\pi=\xi_\pi$.

We will need the following simple fact:

\begin{proposition}
The scalar products between the vectors $\dot{\xi}_\pi$ are given by
$$<\dot{\xi}_\pi,\dot{\xi}_\sigma>=N^{|\pi\vee\sigma|}$$
and hence coincide in the twisted and the untwisted cases.
\end{proposition}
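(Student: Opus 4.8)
The plan is to compute both inner products directly in the standard orthonormal basis $\{e_{j_1}\otimes\cdots\otimes e_{j_l}\}$ of $(\mathbb C^N)^{\otimes l}$, using the explicit descriptions of the vectors $\dot{\xi}_\pi$ recorded above. By Definition 4.3 and Definition 5.1 we may write $\dot{\xi}_\pi=\sum_{j_1\ldots j_l}(\dot{\delta}_\pi)_{j_1\ldots j_l}\,e_{j_1}\otimes\cdots\otimes e_{j_l}$, where $(\delta_\pi)_j=1$ if $\ker j\leq\pi$ and $0$ otherwise, while $(\bar{\delta}_\pi)_j=\varepsilon(\ker j)$ if $\ker j\leq\pi$ and $0$ otherwise. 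Since these basis vectors are orthonormal and the coefficients are real, we get
$$\langle\dot{\xi}_\pi,\dot{\xi}_\sigma\rangle=\sum_{j_1\ldots j_l}(\dot{\delta}_\pi)_{j_1\ldots j_l}(\dot{\delta}_\sigma)_{j_1\ldots j_l},$$
so that only the multi-indices $j$ with $\ker j\leq\pi$ and $\ker j\leq\sigma$ contribute.

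The combinatorial step is to identify these multi-indices. For $\pi\in P_2(l)$, the condition $\ker j\leq\pi$ says exactly that $j$ is constant on each of the pairs of $\pi$; hence $\ker j\leq\pi$ and $\ker j\leq\sigma$ hold simultaneously precisely when $j$ is constant on each block of the join $\pi\vee\sigma$, i.e.\ $\ker j\leq\pi\vee\sigma$, the join being the transitive closure of the union of the two pairings. Moreover, for such a $j$ the partition $\ker j$ is obtained from $\pi$ by merging blocks, so $\ker j\in P_{even}$, and the signature $\varepsilon(\ker j)\in\{-1,1\}$ is well defined by Proposition 4.2.

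It remains to evaluate the coefficients and count. In the untwisted case each surviving term contributes $(\delta_\pi)_j(\delta_\sigma)_j=1$; in the twisted case each surviving term contributes $(\bar{\delta}_\pi)_j(\bar{\delta}_\sigma)_j=\varepsilon(\ker j)^2=1$, using that $\varepsilon$ is $\{-1,1\}$-valued. Hence in both cases $\langle\dot{\xi}_\pi,\dot{\xi}_\sigma\rangle$ equals the number of multi-indices $j$ with $\ker j\leq\pi\vee\sigma$; since such a $j$ amounts to a free choice of a value in $\{1,\ldots,N\}$ for each block of $\pi\vee\sigma$, there are exactly $N^{|\pi\vee\sigma|}$ of them, which proves the formula and the claimed insensitivity to twisting.

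The computation is short, and the only delicate point is that the signs in $\bar{\xi}_\pi$ are genuinely $\pm1$ rather than $0$ for the multi-indices that contribute, so that they square to $1$: this is exactly what Proposition 4.2 gives, since the relevant $\ker j$ all lie in $P_{even}$. Everything else is the classical Brauer/Weingarten bookkeeping for $O_N$.
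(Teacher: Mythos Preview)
Your proof is correct and follows essentially the same route as the paper's own argument: expand both vectors in the tensor basis, observe that only multi-indices $j$ with $\ker j\leq\pi\vee\sigma$ survive, note that in the twisted case the contribution is $\varepsilon(\ker j)^2=1$, and count $N^{|\pi\vee\sigma|}$ such multi-indices. You have merely made explicit a couple of points (why $\ker j\leq\pi$ and $\ker j\leq\sigma$ together give $\ker j\leq\pi\vee\sigma$, and why $\ker j\in P_{even}$) that the paper leaves implicit.
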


\begin{proof}
In the twisted case, we have the following computation:
\begin{eqnarray*}
<\bar{\xi}_\pi,\bar{\xi}_\sigma>
&=&\left\langle\sum_{j:\ker j\leq\pi}\varepsilon(\ker j)e_{j_1}\otimes\ldots\otimes e_{j_l},\sum_{j:\ker j\leq\sigma}\varepsilon(\ker j)e_{j_1}\otimes\ldots\otimes e_{j_l}\right\rangle\\
&=&\sum_{j:\ker j\leq(\pi\vee\sigma)}\varepsilon(\ker j)^2=\sum_{j:\ker j\leq(\pi\vee\sigma)}1=N^{|\pi\vee\sigma|}
\end{eqnarray*}

In the untwisted case the computation is similar, with the signs dissapearing right from the beginning. Thus, in both cases we obtain the formula in the statement.
\end{proof}

Given one of our quantum groups $\dot{U}_N^\times$, and an exponent vector $\alpha=(\alpha_1,\ldots,\alpha_k)\in\{1,*\}^k$, we denote by $P_\times(\alpha)=P_\times(\emptyset,\alpha)$ the set of pairings found in Theorem 4.7 above for $\dot{U}_N^\times$, having no upper points, and having the lower points labelled by the entries of $\alpha$, according to the identifications $u\to 1,\bar{u}\to *$. With this convention, we have:

\begin{proposition}
We have the Weingarten type formula
$$\int_{\dot{U}_N^\times}u_{i_1j_1}^{\alpha_1}\ldots u_{i_kj_k}^{\alpha_k}=\sum_{\pi,\sigma\in P_\times(\alpha)}\dot{\delta}_\pi(i_1\ldots i_k)\dot{\delta}_\sigma(j_1\ldots j_k)W_{kN}^\alpha(\pi,\sigma)$$
where $W_{kN}^\alpha=(G_{kN}^\alpha)^{-1}$, with $G_{kN}^\alpha(\pi,\sigma)=N^{|\pi\vee\sigma|}$, for $\pi,\sigma\in P_\times(\alpha)$.
\end{proposition}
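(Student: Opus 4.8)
The plan is to follow the standard Weingarten strategy, adapted to the twisted/complex setting via the Schur--Weyl duality of Theorem 4.7 and the orthogonality-type computation of Proposition 5.3. First I would recall that, by Woronowicz's Peter--Weyl theory, the integral of a product of coordinates computes the orthogonal projection onto the fixed vectors of the corresponding tensor power. Concretely, write $u^{\otimes\alpha}$ for the corepresentation attached to the word $\alpha$; then
$$\int_{\dot U_N^\times}u_{i_1j_1}^{\alpha_1}\cdots u_{i_kj_k}^{\alpha_k}=\Big(P\Big)_{(i_1\ldots i_k),(j_1\ldots j_k)},$$
where $P$ is the orthogonal projection of $(\mathbb C^N)^{\otimes k}$ onto $\mathrm{Fix}(u^{\otimes\alpha})=\mathrm{Hom}(1,u^{\otimes\alpha})$. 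By Theorem 4.7, this fixed space is spanned by the vectors $\dot\xi_\pi$ with $\pi\in P_\times(\alpha)$, these being exactly the vectors $\dot T_\pi$ of Definition 5.1.

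The second step is to turn ``projection onto the span of the $\dot\xi_\pi$'' into the explicit Weingarten sum. If the family $\{\dot\xi_\pi\mid\pi\in P_\times(\alpha)\}$ were orthonormal the projection would just be $\sum_\pi|\dot\xi_\pi\rangle\langle\dot\xi_\pi|$; in general, for any spanning family, the projection onto its span is $\sum_{\pi,\sigma}W(\pi,\sigma)\,|\dot\xi_\pi\rangle\langle\dot\xi_\sigma|$, where $W=G^{-1}$ is the (generalized) inverse of the Gram matrix $G(\pi,\sigma)=\langle\dot\xi_\pi,\dot\xi_\sigma\rangle$. Here I invoke Proposition 5.3, which gives $G_{kN}^\alpha(\pi,\sigma)=N^{|\pi\vee\sigma|}$ — crucially the same in the twisted and untwisted cases — so the matrix $W_{kN}^\alpha=(G_{kN}^\alpha)^{-1}$ is well defined (for $N$ large enough that $G$ is invertible, the family being linearly independent then). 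Reading off the $(i,j)$-entry of $\sum_{\pi,\sigma}W(\pi,\sigma)|\dot\xi_\pi\rangle\langle\dot\xi_\sigma|$ and using that the $(i_1\ldots i_k)$-coordinate of $\dot\xi_\pi$ is precisely $\dot\delta_\pi(i_1\ldots i_k)$ (Definition 5.1 / Definition 4.3), one obtains exactly the claimed formula
$$\int_{\dot U_N^\times}u_{i_1j_1}^{\alpha_1}\cdots u_{i_kj_k}^{\alpha_k}=\sum_{\pi,\sigma\in P_\times(\alpha)}\dot\delta_\pi(i_1\ldots i_k)\,\dot\delta_\sigma(j_1\ldots j_k)\,W_{kN}^\alpha(\pi,\sigma).$$

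The one genuine point requiring care — the ``main obstacle'' — is the passage from the abstract projection formula to the combinatorial one, namely making sure the Gram matrix is the right object and that $\langle\dot\xi_\pi,\dot\xi_\sigma\rangle$ really is $\langle\dot\xi_\pi,\dot T_\sigma^*\rangle$ in the sense that appears when one expands $P=\sum W(\pi,\sigma)|\dot\xi_\pi\rangle\langle\dot\xi_\sigma|$. This is where the twisted case could in principle misbehave, because the signs $\varepsilon(\tau)$ enter the vectors $\dot\xi_\pi$; but Proposition 5.3 already absorbed this difficulty, showing $\langle\bar\xi_\pi,\bar\xi_\sigma\rangle=N^{|\pi\vee\sigma|}$ with the signs cancelling (each $\varepsilon(\ker j)^2=1$). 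Hence the Gram matrices, and therefore the Weingarten matrices $W_{kN}^\alpha$, are literally the same scalars as in the untwisted case, and the only twisting-dependence left in the final formula sits inside the $\dot\delta_\pi,\dot\delta_\sigma$ factors. For small $N$, where $G$ may be singular, one uses a Moore--Penrose pseudo-inverse, exactly as in the classical Weingarten formula (cf. \cite{bco}, \cite{bsp}); this is a routine remark rather than an obstacle. Everything else is bookkeeping, organized along the lines of the analogous proofs in \cite{bsp}, \cite{bve}.
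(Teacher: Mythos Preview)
Your proposal is correct and follows exactly the approach the paper indicates: the paper's proof is a one-line reference to \cite{bco} together with Theorem 4.7 and the Gram computation, and you have simply unpacked that argument in detail. One small correction: the Gram matrix computation you invoke is Proposition 5.2 in the paper's numbering, not Proposition 5.3 (which is the very statement you are proving).
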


\begin{proof}
This follows indeed as in \cite{bco}, by using Theorem 4.7 and Proposition 5.2. Observe that the Weingarten matrix is the same in the twisted and the untwisted cases.
\end{proof}

Now back to the spheres, we first have the following result:

\begin{lemma}
The linear relations satisfied by the variables $r_{ij}=z_iz_j$ are as follows:
\begin{enumerate}
\item For $S^{N-1}_\mathbb R,\bar{S}^{N-1}_\mathbb R$ we have $r_{ij}=\pm r_{ji}$, and no other relations.

\item For the remaining $8$ spheres, these elements are linearly independent.
\end{enumerate}
In addition, a similar result holds for the variables $c_{ij}=z_iz_j^*$.
\end{lemma}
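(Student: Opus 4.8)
The plan is to analyze the linear span of the products $r_{ij} = z_i z_j$ inside $C(S^{N-1}_\times)$ by pairing them against the Haar state of the acting quantum group $\dot U_N^\times$, using the Weingarten machinery of Proposition 5.4 together with the classification of intertwiner diagrams in Theorem 4.7. The key idea is that the coaction $\Phi(z_i) = \sum_j u_{ij}\otimes z_j$ of Theorem 3.7 gives $\Phi(r_{ij}) = \sum_{kl} u_{ik}u_{jl}\otimes r_{kl}$, so a linear dependence $\sum_{ij}\lambda_{ij} r_{ij} = 0$ that holds universally must be compatible with this coaction; conversely, one detects relations by checking whether the vectors $\dot\xi_\pi$ for the relevant two-block pairings $\pi \in P_\times$ on two lower points span a space of the expected dimension. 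Concretely, on two points the available diagrams are: the identity pairing $|\,|$ (giving $r_{ij}$ unconstrained), the crossing $\slash\!\!\!\backslash$ (present precisely for the $O_N$-type and $\bar O_N$-type spheres, by Proposition 4.5(1)), and, for the half-liberated and free cases, nothing extra at this level. So I would first set up the correspondence between ``linear relations among the $r_{ij}$'' and ``the subspace of $(\mathbb C^N)^{\otimes 2}$ fixed by $u^{\otimes 2}$ restricted appropriately,'' making precise that a universal linear relation among the $r_{ij}$ exists iff the corresponding functional annihilates all these fixed vectors.

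The argument then splits into the two asserted cases. For $S^{N-1}_\mathbb R$ and $\bar S^{N-1}_\mathbb R$: here $z_i = z_i^*$, the quantum group is $O_N$ resp. $\bar O_N$, and the crossing intertwiner $\dot T_{\slash\!\!\!\backslash}$ is present, with $\dot T_{\slash\!\!\!\backslash}(e_i\otimes e_j) = e_j\otimes e_i$ resp. $= -e_j\otimes e_i$ for $i\neq j$ by Proposition 4.4. Translating this into a relation among the $r_{ij}$ via the coaction gives exactly $r_{ij} = r_{ji}$ resp. $r_{ij} = -r_{ji}$ (the latter forcing $r_{ii}=0$, i.e. $z_i^2=0$, consistent with $\bar S^{N-1}_\mathbb R$). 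To see there are \emph{no other} relations, I would use the Weingarten formula of Proposition 5.4 to compute the Gram matrix of the family $\{r_{ij}\}$ (or of a spanning subfamily, e.g. $\{r_{ij} : i\le j\}$ resp. $\{r_{ij} : i<j\}$) under the canonical trace coming from the Haar state, and check it has full rank; the rank count matches $\binom{N+1}{2}$ resp. $\binom{N}{2}$, which is exactly the codimension predicted by the single relation $r_{ij}=\pm r_{ji}$. For the remaining $8$ spheres, the crossing is \emph{not} an intertwiner (Proposition 4.5(1), since none of these quantum groups is contained in $\dot O_N$ for the relevant $\dot{\mathbb K}$), so the only two-point diagram is the identity pairing; hence the Gram matrix of the full family $\{r_{ij}\}_{i,j=1}^N$, computed again via Proposition 5.4, is invertible, giving linear independence of the $N^2$ elements.

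For the variables $c_{ij} = z_i z_j^*$ the argument is parallel, now using the exponent vector $\alpha = (1,*)$ and the corresponding diagram set $P_\times((1,*))$: in the $O_N$/$\bar O_N$ cases one gets $c_{ij} = \pm c_{ji}$ from the crossing (keeping in mind $z_i=z_i^*$ so $c_{ij}=r_{ij}$ there, making the statement literally the same), while in the complex cases $c_{ij}$ with $i=j$ is also governed by the cap/cup diagram $\cap\!\!\cap$ since $\sum_i z_i z_i^* = 1$ — but that only constrains $\sum_i c_{ii}$, not individual $c_{ij}$, and one checks via the Weingarten computation that modulo nothing (resp. modulo $c_{ij}=\pm c_{ji}$ in the real twisted case) the $c_{ij}$ are independent. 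I expect the main obstacle to be the bookkeeping in the Weingarten rank computations: one must correctly identify, for each of the $10$ spheres and each exponent vector, the precise set $P_\times(\alpha)$ of pairings on two lower points (very small sets, but the twisted sign conventions of Proposition 4.4 and the black/white labelling of Theorem 4.7 require care), and then verify invertibility of a small but structured Gram matrix — this is where the $\pm$ signs and the distinction between the real, complex, and twisted cases all have to be handled simultaneously without error. A cleaner alternative for the independence half, which I would try first, is to simply exhibit an explicit representation of each of the $8$ spheres (e.g. the free-complexification or matrix-model representations already used in the proof of Theorem 1.7) in which the relevant products $r_{ij}$, resp. $c_{ij}$, are manifestly linearly independent, reducing the problem to a finite-dimensional linear-algebra check.
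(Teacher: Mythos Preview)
Your approach is close in spirit to the paper's --- both rely on computing Gram matrices of the $r_{ij}$ under the canonical trace via the Weingarten formula, supplemented by explicit models. There is, however, one concrete error and one structural imprecision worth flagging.

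The error: in $\bar S^{N-1}_\mathbb R$ you claim the twisted crossing forces $r_{ii}=0$, i.e.\ $z_i^2=0$. This is false: the defining anticommutation $z_iz_j=-z_jz_i$ holds only for $i\neq j$, and indeed $\sum_i z_i^2=1$ rules out $z_i^2=0$. Proposition~4.4 gives $\bar T_{\slash\!\!\!\backslash}(e_i\otimes e_i)=+e_i\otimes e_i$, so the relation at $i=j$ is vacuous. Consequently your rank count $\binom{N}{2}$ for the twisted real case is wrong; the correct spanning set is $\{r_{ij}:i\le j\}$ of size $\binom{N+1}{2}$, exactly as for $S^{N-1}_\mathbb R$, and the paper's Weingarten computation for $\bar S^{N-1}_\mathbb R$ confirms that this $\binom{N+1}{2}\times\binom{N+1}{2}$ Gram matrix is invertible.

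The structural point: your heuristic ``no crossing in $\mathrm{End}(u^{\otimes 2})$ $\Rightarrow$ no relation among the $r_{ij}$'' is not a proof. The kernel of $e_i\otimes e_j\mapsto r_{ij}$ is certainly an invariant subspace of $(\mathbb C^N)^{\otimes 2}$, but invariant subspaces need not arise as images of $T_\pi-\mathrm{id}$ for some $\pi$; you really do need the Gram computation or an explicit model, as you acknowledge. The paper organizes this more economically than doing Weingarten ten times: it treats $S^{N-1}_\mathbb R,S^{N-1}_\mathbb C$ as clear, carries out the Weingarten/Gram computation only for $\bar S^{N-1}_\mathbb R,\bar S^{N-1}_\mathbb C$ via the model $z_i\mapsto u_{1i}$ on $\bar O_N,\bar U_N$, handles $\dot S^{N-1}_{\mathbb R,*}$ via the $2\times 2$ matrix model $Z_i=\left(\begin{smallmatrix}0&y_i\\ y_i^*&0\end{smallmatrix}\right)$ over $\dot S^{N-1}_\mathbb C$ (so that $Z_iZ_j=\mathrm{diag}(y_iy_j^*,\,y_i^*y_j)$ reduces independence to the case just done), and obtains the remaining four spheres by functoriality from the inclusions $\dot S^{N-1}_{\mathbb R,*}\subset\dot S^{N-1}_{\mathbb C,**}$, $S^{N-1}_{\mathbb R,*}\subset S^{N-1}_{\mathbb R,+}\subset S^{N-1}_{\mathbb C,+}$. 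Your ``cleaner alternative'' of exhibiting explicit representations is thus precisely what the paper does, and is indeed the efficient route.
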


\begin{proof}
We first prove the assertion regarding the variables $r_{ij}=z_iz_j$. We have 10 spheres to be investigated, and the proof goes as follows:

\underline{1-2. $S^{N-1}_\mathbb R,S^{N-1}_\mathbb C$.} The results here are clear.

\underline{3-4. $\bar{S}^{N-1}_\mathbb R,\bar{S}^{N-1}_\mathbb C$.} We prove first the result for $\bar{S}^{N-1}_\mathbb R$. We use the model $z_i\to Z_i=u_{1i}$, where $u_{ij}$ are the standard coordinates on $\bar{O}_N$. We have:
$$<Z_iZ_j,Z_kZ_l>
=\int_{\bar{O}_N}u_{1i}u_{1j}u_{1l}u_{1k}=\sum_{\pi,\sigma\in P_2(4)}\bar{\delta}_\sigma(i,j,l,k)W_{4N}(\pi,\sigma)$$

Since $P_2(4)=\{\cap\cap,\Cap,\cap\!\!\cap\}$, the Weingarten matrix on the right is given by:
$$W_{4N}=\begin{pmatrix}N^2&N&N\\ N&N^2&N\\ N&N&N^2\end{pmatrix}^{-1}=\frac{1}{N(N-1)(N+2)}\begin{pmatrix}N+1&-1&-1\\ -1&N+1&-1\\ -1&-1&N+1\end{pmatrix}$$

We conclude that we have the following formula:
$$<Z_iZ_j,Z_kZ_l>=\frac{1}{N(N+2)}\sum_{\sigma\in P_2(4)}\bar{\delta}_\sigma(i,j,l,k)$$

The matrix on the right, taken with indices $i\leq j$ and $k\leq l$, is then invertible. Thus the variables $Z_iZ_j$ are linearly independent, and so must be the variables $z_iz_j$.

For the sphere $\bar{S}^{N-1}_\mathbb C$, a similar computation, using now a $\bar{U}_N$ model, gives:
$$<Z_iZ_j,Z_kZ_l>=\int_{\bar{U}_N}u_{1i}u_{1j}u_{1l}^*u_{1k}^*=\sum_{\pi,\sigma\in P_2(11**)}\bar{\delta}_\sigma(i,j,l,k)W_{4N}^{11**}(\pi,\sigma)$$

We have $P_2(11**)=\{\Cap,\cap\!\!\cap\}$, and the corresponding Weingarten matrix is:
$$W_{4N}^{11**}=\begin{pmatrix}N^2&N\\ N&N^2\end{pmatrix}^{-1}=\frac{1}{N(N^2-1)}\begin{pmatrix}N&-1\\ -1&N\end{pmatrix}$$

We therefore obtain the following formula:
$$<Z_iZ_j,Z_kZ_l>=\frac{1}{N(N+1)}\sum_{\sigma\in P_2(11**)}\bar{\delta}_\sigma(i,j,l,k)$$

Once again, since the matrix on the right is invertible, we obtain the result.

\underline{5-6. $S^{N-1}_{\mathbb R,*},\bar{S}^{N-1}_{\mathbb R,*}$.} We can use here a $2\times2$ matrix trick from \cite{bdu}. Consider indeed one of the spheres $S^{N-1}_\mathbb C/\bar{S}^{N-1}_\mathbb C$, with coordinates denoted $y_1,\ldots,y_N$, and let us set:
$$Z_i=\begin{pmatrix}0&y_i\\ y_i^*&0\end{pmatrix}$$

As explained in the proof of Theorem 1.7 above, these matrices produce models for $S^{N-1}_{\mathbb R,*},\bar{S}^{N-1}_{\mathbb R,*}$. Now observe that the elements $r_{ij}=z_iz_j$ map in this way to:
$$R_{ij}=Z_iZ_j=\begin{pmatrix}0&y_i\\ y_i^*&0\end{pmatrix}\begin{pmatrix}0&y_j\\ y_j^*&0\end{pmatrix}=\begin{pmatrix}y_iy_j^*&0\\ 0&y_i^*y_j\end{pmatrix}$$

Thus, the result follows from the result for $\bar{S}^{N-1}_\mathbb R,\bar{S}^{N-1}_\mathbb C$, established above.

\underline{7-10. $S^{N-1}_{\mathbb R,+},S^{N-1}_{\mathbb C,+},S^{N-1}_{\mathbb C,**},\bar{S}^{N-1}_{\mathbb C,**}$.} The results here follow simply by functoriality, from those established above, for the smaller spheres $S^{N-1}_{\mathbb R,*},\bar{S}^{N-1}_{\mathbb R,*}$.

Finally, the proof of the last assertion is similar, with no new computations needed in the real case, where $r_{ij}=c_{ij}$, and with the same Weingarten matrix, this time coming from the set $P_2(1*1*)=\{\cap\cap,\Cap\}$, appearing in the complex case.
\end{proof}

We can improve now Theorem 3.7 above. First, we have:

\begin{proposition}
$O_N,U_N,\bar{O}_N,\bar{U}_N,O_N^+,U_N^+$ are the biggest compact quantum groups acting on their respective spheres, with the actions leaving $span(z_i)$ invariant.
\end{proposition}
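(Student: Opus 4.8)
The plan is to prove the statement separately for the six quantum groups in question, with the six spheres being $S^{N-1}_\mathbb R,S^{N-1}_\mathbb C,\bar S^{N-1}_\mathbb R,\bar S^{N-1}_\mathbb C,S^{N-1}_{\mathbb R,+},S^{N-1}_{\mathbb C,+}$, paired according to the diagrams of Proposition 1.5 and Proposition 3.5. By Theorem 3.7, each of these quantum groups does act on its sphere via $\Phi(z_i)=\sum_j u_{ij}\otimes z_j$, which manifestly leaves $\mathrm{span}(z_i)$ invariant. So the content is the maximality claim. The strategy is the standard one for such affine universality problems: suppose a compact quantum group $G$ acts on the sphere $S^{N-1}_\times$ with coaction $\Phi$ leaving $V=\mathrm{span}(z_i)$ invariant, write $\Phi(z_i)=\sum_j v_{ij}\otimes z_j$ for some matrix $v=(v_{ij})$ over $C(G)$, and show that $v$ satisfies the defining relations of the target quantum group $U_N^\times$. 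This yields a surjection $C(U_N^\times)\to C(G)$ intertwining the coactions, which is exactly the desired maximality.

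First I would record that $v$ is a (unitary or orthogonal) corepresentation: the coassociativity and counitality axioms of Definition 3.6 force $\Delta(v_{ij})=\sum_k v_{ik}\otimes v_{kj}$ and $\varepsilon(v_{ij})=\delta_{ij}$ in the usual way; faithfulness together with the fact that the $z_i$ generate $C(S^{N-1}_\times)$ ensures the $v_{ij}$ generate $C(G)$. Next, applying $\Phi$ to the sphere relation $\sum_i z_iz_i^*=1$ (and $\sum_i z_i^*z_i=1$ in the complex case, resp. $\sum_i z_i^2=1$ with $z_i=z_i^*$ in the real case) and using the linear independence / relations among the quadratic monomials $z_iz_j^*$, $z_iz_j$ supplied by Lemma 5.4, one extracts that $v^tv^*$ and $v^*v$ (resp. $vv^t$, $v^tv$) are scalar, hence that $v$ is biunitary (resp. orthogonal), so $G\subset U_N^+$ (resp. $G\subset O_N^+$). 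This already settles the two free cases.

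For the four remaining classical/twisted cases, I would push the same computation one degree further. In the classical real case: applying $\Phi$ to $z_iz_j=z_jz_i$ gives $\sum_{k,l}v_{ik}v_{jl}\otimes z_kz_l=\sum_{k,l}v_{jk}v_{il}\otimes z_kz_l$, and invoking Lemma 5.4(1) — which says the only relations among the $r_{kl}=z_kz_l$ on $S^{N-1}_\mathbb R$ are $r_{kl}=r_{lk}$ — one reads off, for each pair of multi-indices, the identities $v_{ik}v_{jl}+v_{ik}v_{jl}=\dots$ which collapse precisely to the commutation relations $v_{ik}v_{jl}=v_{jl}v_{ik}$ defining $O_N$. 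The twisted real case $\bar S^{N-1}_\mathbb R$ is identical but with the sign bookkeeping: the relation $z_iz_j=-z_jz_i$ ($i\neq j$) processed against Lemma 5.4(1) for $\bar S^{N-1}_\mathbb R$ (where $r_{kl}=-r_{lk}$) forces exactly the row/column anticommutation relations of Proposition 3.3, i.e. $v$ lands in $\bar O_N$. The two complex cases $S^{N-1}_\mathbb C$, $\bar S^{N-1}_\mathbb C$ are handled in the same way, now also feeding in the mixed relations $z_iz_j^*=z_j^*z_i$ (resp. $z_iz_j^*=-z_j^*z_i$) and using the $c_{ij}=z_iz_j^*$ part of Lemma 5.4, producing the full set of relations cutting out $U_N$ (resp. $\bar U_N$) inside $U_N^+$. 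In each case one checks the resulting surjection $C(U_N^\times)\to C(G)$ is compatible with $\Delta$, $\varepsilon$, and the coactions, finishing the proof.

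The main obstacle I anticipate is the sign/index bookkeeping in the two twisted cases: one must verify that when the defining relation of the twisted sphere is contracted with $v\otimes v$ and re-expanded, the signs attached to the quadratic monomials $z_kz_l$ (which depend on whether $k=l$) recombine to give \emph{exactly} the ``same row or column'' anticommutation pattern of $\bar O_N/\bar U_N$, with no spurious extra relations forcing a collapse. This is the same kind of case analysis already carried out in the proof of Theorem 3.7 and of Proposition 3.3, so it should go through, but it is where the real work lies; the biunitarity/orthogonality step and the free cases are essentially immediate from Lemma 5.4.
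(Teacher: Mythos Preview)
Your overall strategy matches the paper's proof, and the free cases are handled correctly. However, there is a genuine gap in the four classical/twisted cases, and it is not the sign bookkeeping you flag---it is one step earlier.

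When you match coefficients in $\Phi(z_iz_j)=\Phi(z_jz_i)$ against the basis $\{z_kz_l:k\le l\}$ (using Lemma 5.4(1)), what you actually obtain is only the \emph{symmetrized} identity
\[
[v_{ik},v_{jl}]=[v_{jk},v_{il}]\qquad\text{for all }i,j,k,l,
\]
not full commutativity. Setting $k=l$ gives $[v_{ik},v_{jk}]=0$, but for $k\ne l$ this relation alone does not force $[v_{ik},v_{jl}]=0$; you can check that the relation is symmetric under swapping $(i,j)$ and under swapping $(k,l)$, so iterating it gets you nowhere new. The missing ingredient is the antipode: applying $S$ (an antihomomorphism with $S(v_{ab})=v_{ba}$ in the orthogonal case) to the displayed relation and relabelling yields $[v_{ik},v_{jl}]=[v_{il},v_{jk}]$, and combining the two gives $[v_{ik},v_{jl}]=-[v_{ik},v_{jl}]=0$. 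The paper carries out exactly this antipode trick (attributed to \cite{bg1}), and the same mechanism---antipode plus relabelling---is what produces the row anticommutation from the column anticommutation in the $\bar O_N$ case, and what closes the argument in the complex cases (there with $S$ followed by $*$, since $S(v_{ab})=v_{ba}^*$). Your phrase ``collapse precisely to the commutation relations'' hides this step, and without it the argument does not go through.

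A second, smaller point: in the complex case the paper does not use the mixed relations $z_iz_j^*=z_j^*z_i$ on the sphere at all. It first derives $ab=ba$ for the $v_{ij}$ by the method above, and then invokes the categorical trick (stated before Definition 2.2) that for a unitary corepresentation $ab=ba$ automatically implies $ab^*=b^*a$. Your route via the $c_{ij}$ part of Lemma 5.4 could in principle be made to work too, but it would again need the antipode step, and is not what the paper does.
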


\begin{proof}
The fact that $span(z_i)$ is left invariant means that the coaction must be of the form $\Phi(z_i)=\sum_ju_{ij}\otimes z_j$. We have six situations to be investigated, as follows:

\underline{1-2. $S^{N-1}_{\mathbb R,+},S^{N-1}_{\mathbb C,+}$.} Let us go back to the proof of Theorem 3.7. As explained there, the coaction axioms are equivalent to the fact that $u=(u_{ij})$ is a corepresentation. Also, with the notation $Z_i=\sum_ju_{ij}\otimes z_j$, we know from there that we have:
$$\sum_iZ_iZ_i^*=\sum_{jk}(u^t\bar{u})_{jk}\otimes z_jz_k^*,\qquad\sum_iZ_i^*Z_i=\sum_{jk}(u^*u)_{kj}\otimes z_k^*z_j$$

Now by using Lemma 5.4 above for the free spheres, we deduce that the conditions $\sum_iZ_iZ_i^*=\sum_iZ_i^*Z_i=1$ are equivalent to the conditions $u^t\bar{u}=u^*u=1$. 

Now since $u$ is already known to be a corepresentation, by the results of Woronowicz in \cite{wo1} it follows that $u$ must be a biunitary corepresentation, and we are done.

\underline{3-4. $S^{N-1}_\mathbb R,S^{N-1}_\mathbb C$.} For the sphere $S^{N-1}_\mathbb R$ this is done in \cite{bg1}. We reproduce here the proof, in view of some further extensions and modifications. First, we have:
\begin{eqnarray*}
\Phi(z_iz_j)
&=&\sum_{kl}u_{ik}u_{jl}\otimes z_kz_l\\
&=&\sum_ku_{ik}u_{jk}\otimes z_k^2+\sum_{k<l}(u_{ik}u_{jl}+u_{il}u_{jk})\otimes z_kz_l\\
&=&\sum_{k\leq l}(u_{ik}u_{jl}+u_{il}u_{jk})\otimes\left(1-\frac{\delta_{kl}}{2}\right)z_kz_l
\end{eqnarray*}

We deduce from this that $\Phi$ maps the commutators $[z_i,z_j]$ as follows:
\begin{eqnarray*}
\Phi([z_i,z_j])
&=&\sum_{k\leq l}(u_{ik}u_{jl}+u_{il}u_{jk}-u_{jk}u_{il}-u_{jl}u_{ik})\otimes\left(1-\frac{\delta_{kl}}{2}\right)z_kz_l\\
&=&\sum_{k\leq l}\left([u_{ik},u_{jl}]-[u_{jk},u_{il}]\right)\otimes\left(1-\frac{\delta_{kl}}{2}\right)z_kz_l
\end{eqnarray*}

Now since the variables $\{z_kz_l|k\leq l\}$ are linearly independent, we obtain from this $[u_{ik},u_{jl}]=[u_{jk},u_{il}]$, for any $i,j,k,l$. Moreover, if we apply now the antipode we further obtain $[u_{lj},u_{ki}]=[u_{li},u_{kj}]$, and by relabelling, $[u_{ik},u_{jl}]=[u_{il},u_{jk}]$. We therefore conclude that we have $[u_{ik},u_{jl}]=0$ for any $i,j,k,l$, and this finishes the proof. See \cite{bg1}.

For $S^{N-1}_\mathbb C$ the beginning of the proof is similar, and gives $[u_{ik},u_{jl}]=[u_{jk},u_{il}]$. Now if we apply the antipode followed by the involution we obtain as before $[u_{lj},u_{ki}]=[u_{li},u_{kj}]$, then $[u_{ik},u_{jl}]=[u_{il},u_{jk}]$, and finally $[u_{ik},u_{jl}]=0$. Thus the coordinates are subject to the commutation relations $ab=ba$, and by using a standard categorial trick, mentioned before Definition 2.2 above, we have as well $ab^*=b^*a$, and we are done.

\underline{5-6. $\bar{S}^{N-1}_\mathbb R$, $\bar{S}^{N-1}_\mathbb C$.} The proof here is similar to the above proof for $S^{N-1}_\mathbb R,S^{N-1}_\mathbb C$, by using Lemma 5.4, and by adding signs where needed. First, for $\bar{S}^{N-1}_\mathbb R$ we have:
$$\Phi(z_iz_j)=\sum_ku_{ik}u_{jk}\otimes z_k^2+\sum_{k<l}(u_{ik}u_{jl}-u_{il}u_{jk})\otimes z_kz_l$$

We deduce that with $[[a,b]]=ab+ba$ we have the following formula:
$$\Phi([[z_i,z_j]])=\sum_k[[u_{ik},u_{jk}]]\otimes z_k^2+\sum_{k<l}([u_{ik},u_{jl}]-[u_{il},u_{jk}])\otimes z_kz_l$$

Now assuming $i\neq j$, we have $[[z_i,z_j]]=0$, and we therefore obtain $[[u_{ik},u_{jk}]]=0$ for any $k$, and $[u_{ik},u_{jl}]=[u_{il},u_{jk}]$ for any $k<l$. By applying the antipode and then by relabelling, the latter relation gives $[u_{ik},u_{jl}]=0$, and we are done.

The proof for $\bar{S}^{N-1}_\mathbb C$ is similar, by using the above-mentioned categorical trick, in order to deduce from the relations $ab=\pm ba$ the remaining relations $ab^*=\pm b^*a$.
\end{proof}

In order to deal with the half-liberated cases, we will need:

\begin{lemma}
Consider one of the spheres $S^{N-1}_{\mathbb R,*},S^{N-1}_{\mathbb C,**},\bar{S}^{N-1}_{\mathbb R,*},\bar{S}^{N-1}_{\mathbb C,**}$.
\begin{enumerate}
\item The variables $z_az_bz_c$ with $a<c$ and $a,b,c$ distinct are linearly independent.

\item These variables are independent as well from any $z_az_bz_c$ with $a,b,c$ not distinct.
\end{enumerate}
In addition, a similar result holds for the variables of type $z_az_b^*z_c$.
\end{lemma}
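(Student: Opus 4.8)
The plan is to follow the scheme of the proof of Lemma 5.4, replacing its two-variable Weingarten computations by three-variable ones. I treat the $*$-free triple products $z_az_bz_c$ and the mixed ones $z_az_b^*z_c$ separately; in each case I first reduce, by the $2\times2$ matrix models and by functoriality, to a linear independence statement over a classical or twisted complex sphere, and then settle it by a Gram matrix computation. The underlying principle is that a linear relation among the $z_az_bz_c$ in the sphere algebra is carried, along any $*$-homomorphism out of that algebra, to a relation among the images; so, a family of Hilbert space vectors being linearly independent as soon as its Gram matrix is invertible, it suffices to exhibit a model for which the Gram matrix of the relevant images, taken with respect to the Haar state pulled back along the model, is invertible. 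One may assume $N\geq3$, the cases $N\leq2$ being vacuous.

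For $S^{N-1}_{\mathbb R,*}$ and $\bar{S}^{N-1}_{\mathbb R,*}$ I use the $2\times2$ model of \cite{bdu} already used in the proofs of Theorem 1.7 and Lemma 5.4: setting $z_i\to Z_i=\begin{pmatrix}0&y_i\\ y_i^*&0\end{pmatrix}$ with $y_1,\dots,y_N$ the coordinates on $S^{N-1}_\mathbb C$, resp.\ on $\bar{S}^{N-1}_\mathbb C$, one computes $Z_aZ_bZ_c=\begin{pmatrix}0&y_ay_b^*y_c\\ y_a^*y_by_c^*&0\end{pmatrix}$, so (1) and (2) for the real spheres follow from the analogous statements for the variables $y_ay_b^*y_c$ over $S^{N-1}_\mathbb C$, resp.\ over $\bar{S}^{N-1}_\mathbb C$. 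For the complex half-liberated spheres, the surjections $C(S^{N-1}_{\mathbb C,**})\to C(S^{N-1}_{\mathbb R,*})$ and $C(\bar{S}^{N-1}_{\mathbb C,**})\to C(\bar{S}^{N-1}_{\mathbb R,*})$ coming from Proposition 1.5 push any linear relation among the $*$-free products down to the real case, so the statement for the $z_az_bz_c$ there is automatic; the mixed products $z_az_b^*z_c$ are not seen faithfully by these quotients, and for them I run the same Gram matrix argument directly, with the model $z_i\to u_{1i}$ over $U_N^{**}$, resp.\ over $\bar{U}_N^{**}$. Finally the untwisted complex case is elementary: on the classical unit sphere the only relation is $\sum_iz_i\bar z_i=1$, of bidegree $(1,1)$, while $\sum\lambda_{abc}z_a\bar z_bz_c$ is of bidegree $(2,1)$, so comparing bidegrees shows that no nontrivial such combination lies in the defining ideal, and the distinct monomials $z_a\bar z_bz_c$ are linearly independent as functions.

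It remains to treat the twisted case. With the model $y_i\to u_{1i}$ over $\bar{U}_N$ (and $z_i\to u_{1i}$ over $\bar{U}_N^{**}$ for the mixed variables), Proposition 5.3 gives, all first indices being equal to $1$ so that the corresponding $\delta$-symbol is $1$,
$$\big\langle y_ay_b^*y_c,\,y_dy_e^*y_f\big\rangle=\int_{\bar{U}_N}u_{1a}u_{1b}^*u_{1c}\,u_{1f}^*u_{1e}u_{1d}^*=\rho\sum_{\sigma\in P_\times(1*1*1*)}\bar\delta_\sigma(a,b,c,f,e,d),$$
where, just as in Lemma 5.4, the row sum $\rho=\sum_\pi W^{1*1*1*}_{6N}(\pi,\sigma)$ is (independently of $\sigma$) a positive constant, and $P_\times(1*1*1*)$ --- the pairings of the six lower points joining the three $u$-legs to the three $\bar u$-legs --- is indexed by $S_3$. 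The decisive combinatorial facts are that $\bar\delta_\sigma(a,b,c,f,e,d)\neq0$ forces $\sigma=\ker(a,b,c,f,e,d)$, whence this kernel must be a pairing, forcing $\{a,b,c\}=\{d,e,f\}$ with every coincidence between the two triples actually occurring, and that this kernel-pairing must itself join $u$-legs to $\bar u$-legs. Feeding in the half-commutation relation $z_az_bz_c=z_cz_bz_a$ --- which is exactly what makes the triples with $a<c$ and $a,b,c$ distinct an honest indexing set, and likewise groups the repeated-index triples --- one finds that the Gram matrix is block diagonal, equal to $\rho$ times the identity on the block of distinct triples, and with no entries coupling distinct triples to non-distinct ones.

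The step I expect to be the real obstacle is this last combinatorial bookkeeping: deciding, for each pair of index triples, exactly which $u$-to-$\bar u$ pairings $\sigma$ survive in the sum; reading off the block structure; and verifying that the remaining block --- the one indexed by the repeated-index triples --- is invertible, which it is, for $N\geq3$, by the same mechanism, the smaller Gram matrices involved being precisely those already computed in Lemma 5.4 together with classical ones on the sphere. Everything else is routine: the $2\times2$ and functoriality reductions; the verification, via Proposition 3.4 and Theorem 3.7, that the assignments $z_i\to u_{1i}$ do define $*$-homomorphisms on the sphere algebras; and the mixed-variable cases, which run word for word as above, now over $U_N^{**}$ and $\bar{U}_N^{**}$ in place of $U_N$ and $\bar{U}_N$.
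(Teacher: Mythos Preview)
Your approach is correct, and in the end it lands on the same six-pairing Weingarten computation the paper uses, but you arrive there by a detour.  The paper goes directly: it models each half-liberated sphere by $z_i\mapsto u_{1i}$ in the corresponding half-liberated quantum group $\dot O_N^*$ (resp.\ $\dot U_N^{**}$), observes that $P_2^*(6)$ has exactly six elements whose Gram matrix is stochastic with row sum $N^3+3N^2+2N$, and concludes that the Gram matrix of the triples $Z_aZ_bZ_c$ equals a positive constant times $\sum_\sigma\dot\delta_\sigma(a,b,c,k,j,i)$, whose rank one then reads off.  You instead push the real half-liberated spheres through the $2\times2$ model to $\dot S^{N-1}_\mathbb C$, reducing to independence of $y_ay_b^*y_c$ there, and then run Weingarten over $\dot U_N$ for the word $1*1*1*$; but the six bipartite pairings you obtain are exactly the six elements of $P_2^*(6)$, and your stochasticity observation and block-diagonal analysis are identical to the paper's implicit ``rank computation''.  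What your detour buys is a genuinely elementary treatment of the untwisted case via the classical complex sphere, bypassing Weingarten entirely there; what it costs is the extra reduction machinery.

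Two small slips: the phrase ``half-commutation relation $z_az_bz_c=z_cz_bz_a$'' is misplaced in the $\bar S^{N-1}_\mathbb C$ context---there one has $y_ay_b^*y_c=-y_cy_b^*y_a$ for $a,b,c$ distinct, coming from anticommutation, which is what actually makes $a<c$ an honest index set.  And your ``bidegree'' argument on $S^{N-1}_\mathbb C$ is not quite right as stated, since $\sum z_i\bar z_i-1$ is not bihomogeneous; the clean version is a total-degree argument showing the degree-$3$ part of the ideal is zero.  Neither affects the overall validity.  Finally, the extra Weingarten pass over $\dot U_N^{**}$ for the mixed products on the complex half-liberated spheres is unnecessary: under the surjection to the real sphere, $z_az_b^*z_c\mapsto x_ax_bx_c$, so independence there already follows from the real case.
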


\begin{proof}
We use the same method as in the proof of Lemma 5.4, with models coming from the quantum groups $O_N^*,U_N^{**},\bar{O}_N^*,\bar{U}_N^{**}$. For the quantum groups $O_N^*,\bar{O}_N^*$, we have:
$$<Z_aZ_bZ_c,Z_iZ_jZ_k>=\int_{\dot{O}_N^*}u_{1a}u_{1b}u_{1c}u_{1k}u_{1j}u_{1i}=\sum_{\pi,\sigma\in P_2^*(6)}\bar{\delta}_\sigma(a,b,c,k,j,i)W_{6N}(\pi,\sigma)$$

The set $P_2^*(6)\simeq P_2^*(3,3)$ is by definition formed by the following pairings:
$$\xymatrix@R=10mm@C=5mm{\circ\ar@{-}[d]&\bullet\ar@{-}[d]&\circ\ar@{-}[d]\\ \bullet&\circ&\bullet}\qquad\qquad
\xymatrix@R=10mm@C=5mm{\circ\ar@{-}[d]&\bullet\ar@/_/@{-}[r]&\circ\\ \bullet&\circ\ar@/^/@{-}[r]&\bullet}\qquad\qquad
\xymatrix@R=10mm@C=5mm{\circ\ar@/_/@{-}[r]&\bullet&\circ\ar@{-}[dll]\\ \bullet&\circ\ar@/^/@{-}[r]&\bullet}$$
$$\xymatrix@R=10mm@C=5mm{\circ\ar@/_/@{-}[r]&\bullet&\circ\ar@{-}[d]\\ \bullet\ar@/^/@{-}[r]&\circ&\bullet}\qquad\qquad
\xymatrix@R=10mm@C=5mm{\circ\ar@{-}[drr]&\bullet\ar@/_/@{-}[r]&\circ\\ \bullet\ar@/^/@{-}[r]&\circ&\bullet}\qquad\qquad
\xymatrix@R=10mm@C=5mm{\circ\ar@{-}[drr]&\bullet\ar@{-}[d]&\circ\ar@{-}[dll]\\ \bullet&\circ&\bullet}$$

Now observe that the scalar products of each of these pairings with all the 6 pairings are always, up to a permutation of the terms, $N^3,N^2,N^2,N^2,N,N$. Thus the Gram matrix is stochastic, $G_{6N}\xi=\xi$, where $\xi=(1,\ldots,1)^t$ is the all-one vector. Thus we have $W_{6N}\xi=W_{6N}G_{6N}\xi=\xi$, and so the Weingarten matrix is stochastic too. We conclude that, up to a universal constant depending only on $N$, we have:
$$<Z_aZ_bZ_c,Z_iZ_jZ_k>\sim\sum_{\sigma\in P_2^*(6)}\bar{\delta}_\sigma(a,b,c,k,j,i)$$

Now by computing the rank of this matrix, we obtain the result.

Regarding now the last assertion, this follows from the same computation. Indeed, comparing the products of type $Z_aZ_b^*Z_c$ leads to the same formula and conclusion, because the pairings in $P_2^*(6)$ are all compatible with the leg labelling $1*1*1\,*$.
\end{proof}

We have now all ingredients for fully improving Theorem 3.7 above, with the remark that we will further process this result in section 6 below:

\begin{theorem}
Each quantum group $U_N^\times$ is the biggest compact quantum group acting on its respective sphere $S^{N-1}_\times$, with the action leaving $span(z_i)$ invariant.
\end{theorem}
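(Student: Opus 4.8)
The plan is to combine Proposition 5.5, which already handles the six spheres $S^{N-1}_\mathbb R,S^{N-1}_\mathbb C,\bar S^{N-1}_\mathbb R,\bar S^{N-1}_\mathbb C,S^{N-1}_{\mathbb R,+},S^{N-1}_{\mathbb C,+}$, with a direct argument for the four half-liberated spheres $S^{N-1}_{\mathbb R,*},S^{N-1}_{\mathbb C,**},\bar S^{N-1}_{\mathbb R,*},\bar S^{N-1}_{\mathbb C,**}$, using Lemma 5.7 in place of Lemma 5.4. So the only real work is in the half-liberated cases.

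In each half-liberated case, a coaction leaving $\mathrm{span}(z_i)$ invariant must be of the form $\Phi(z_i)=\sum_j u_{ij}\otimes z_j$ with $u=(u_{ij})$ a biunitary corepresentation of the acting quantum group $G$; that $u$ is biunitary follows exactly as in cases 1-2 of the proof of Proposition 5.5, since each half-liberated sphere sits inside the corresponding free sphere. Hence $G\subset O_N^+$ or $G\subset U_N^+$ according to the real/complex case. The content is then to show that $\Phi$ being well-defined on $C(S^{N-1}_\times)$ forces the standard coordinates of $G$ to satisfy the half-commutation (or twisted half-commutation) relations defining the respective quantum group $U_N^\times$, i.e. to show $G\subset U_N^\times$; the reverse inclusion, that $U_N^\times$ does act, is exactly Theorem 3.7.

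Concretely, I would compute $\Phi(z_az_bz_c)=\sum_{ijk}u_{ai}u_{bj}u_{ck}\otimes z_iz_jz_k$ and apply the defining relation of the sphere on the left-hand side, obtaining an identity $\Phi(z_az_bz_c\mp z_cz_bz_a)=0$ for the appropriate sign. Then I would expand the right-hand side in the basis of $z_iz_jz_k$, separating the terms with $i,j,k$ distinct from those with $i,j,k$ not distinct, and quotient by the relations among the $z_iz_jz_k$ themselves (these are precisely the half-commutation-type relations of $S^{N-1}_\times$). By Lemma 5.7(1)-(2), after this reduction the remaining monomials $z_iz_jz_k$ with $i<k$ and $i,j,k$ distinct, together with a spanning set of the non-distinct ones, are linearly independent; therefore each coefficient must vanish. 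Matching coefficients yields exactly the relations $u_{ai}u_{bj}u_{ck}=\pm u_{ck}u_{bj}u_{ai}$ under the span conditions appearing in Definitions 3.2 and Proposition 3.4, that is, the defining relations of $U_N^\times$. As in Proposition 5.5, one may need to apply the antipode $S$ (and, in the complex cases, the involution) to symmetrize the resulting relations and to pass from relations among $u_{ij}$ to the corresponding relations among $u_{ij},u_{ij}^*$, using the categorical trick recalled before Definition 2.2. The complex cases $S^{N-1}_{\mathbb C,**},\bar S^{N-1}_{\mathbb C,**}$ follow from the real ones by inserting $*$ exponents in the middle position, exactly as throughout Sections 3-5.

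The main obstacle is the bookkeeping in the coefficient-matching step: one has to carefully track which monomials $z_iz_jz_k$ survive modulo the sphere relations, in which order $u$-triples appear after applying the sphere relation, and that the span conditions $(r,c)$ on the index triples $(a,b,c)$ translate correctly through this process into the span conditions on $u$ in Proposition 3.4 — in particular checking that the signs in the twisted cases $\bar S^{N-1}_{\mathbb R,*},\bar S^{N-1}_{\mathbb C,**}$ come out consistently, which is the same sign analysis already carried out in the proof of Proposition 3.4. Once the distinct-index coefficients give the half-commutation relations and the non-distinct ones are checked to impose nothing beyond what biunitarity already gives, applying $S$ to symmetrize finishes the identification $G\subset U_N^\times$, and combined with Theorem 3.7 this proves maximality.
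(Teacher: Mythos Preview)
Your approach is essentially the paper's: reduce to the four half-liberated spheres via Proposition 5.5, expand $\Phi$ on triple products, and use the linear-independence lemma to read off the half-commutation relations on the $u_{ij}$. Two corrections are in order.

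First, a numbering slip: the linear-independence result you invoke is Lemma 5.6, not Lemma 5.7 (Theorem 5.7 is the statement you are proving).

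Second, and more substantively, your expectation that ``the non-distinct ones are checked to impose nothing beyond what biunitarity already gives'' is wrong, and if you acted on it you would end up with an incomplete proof. In the paper's analysis the expansion of $Z_iZ_jZ_k$ is split into five sums according to coincidences among the column indices $a,b,c$. The sum with $a,b,c$ distinct yields only the half-commutation relation at span $(3,3)$ (in the case $i,j,k$ distinct) or $(2,3)$ (in the case $i=j\neq k$). The remaining sums, with repeated column indices, are exactly what produce the relations at spans $(3,2)$, $(3,1)$, $(2,2)$, $(2,1)$, which are genuine parts of the definition of $\bar{O}_N^*$ in Proposition 3.4 (and likewise of $O_N^*$ in the untwisted case). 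These do \emph{not} follow from biunitarity, nor can the antipode trick recover, say, the $(2,2)$ relations from the $(3,3)$ ones: $S$ only transposes row and column spans. So you must extract coefficients from all five sums in each of the two cases $i,j,k$ distinct and $i=j\neq k$, exactly as the paper does; once you do, the full table of signs from Proposition 3.4 drops out and the argument closes. Your earlier sentence ``matching coefficients yields exactly the relations $u_{ai}u_{bj}u_{ck}=\pm u_{ck}u_{bj}u_{ai}$ under the span conditions'' is the right picture; the later sentence contradicts it and should be dropped.
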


\begin{proof}
In view of Proposition 5.5, we just have to discuss the 4 half-liberated cases. 

The idea here will be that for the spheres $S^{N-1}_{\mathbb R,*},S^{N-1}_{\mathbb C,**},\bar{S}^{N-1}_{\mathbb R,*},\bar{S}^{N-1}_{\mathbb C,**}$ the proof will be similar to the one for the spheres $S^{N-1}_\mathbb R,S^{N-1}_\mathbb C,\bar{S}^{N-1}_\mathbb R,\bar{S}^{N-1}_\mathbb C$, by replacing the commutators $[u_{ia},u_{kc}]=u_{ia}u_{kc}-u_{kc}u_{ia}$ by quantities of type $[u_{ia},u_{jb},u_{kc}]=u_{ia}u_{jb}u_{kc}-u_{kc}u_{jb}u_{ia}$.

We only discuss the twisted case, the proof in the untwisted case being similar. For a coaction on $\bar{S}^{N-1}_{\mathbb R,*}$, we have two sets of conditions to be verified, as follows:

-- For $i,j,k$ distinct, we must have $Z_iZ_jZ_k=-Z_kZ_jZ_i$. We have:
\begin{eqnarray*}
Z_iZ_jZ_k
&=&\sum_{a,b,c\ distinct}u_{ia}u_{jb}u_{kc}\otimes z_az_bz_c+\sum_{a\neq c}u_{ia}u_{ja}u_{kc}\otimes z_az_az_c\\
&&+\sum_{a\neq b}u_{ia}u_{jb}u_{ka}\otimes z_az_bz_a+\sum_{a\neq b}u_{ia}u_{jb}u_{kb}\otimes z_az_bz_b\\
&&+\sum_au_{ia}u_{ja}u_{ka}\otimes z_az_az_a
\end{eqnarray*}

Now by using Lemma 5.6, all three sums appearing at left must vanish, and the 2 sums on the right must add up to 0 too. From the vanishing of the first sum we conclude, by proceeding as in the proof of Proposition 5.5, that the coordinates $u_{ia}$ satisfy the relations $abc=cba$, when their span is $(3,3)$. Similarly, from the vanishing of the other sums we obtain $abc=-cba$ for a $(3,2)$ span, and $abc=-cba$ for a $(3,1)$ span.

-- For $i\neq k$ we must have $Z_iZ_iZ_k=Z_kZ_iZ_i$. We have:
\begin{eqnarray*}
Z_iZ_iZ_k
&=&\sum_{a,b,c\ distinct}u_{ia}u_{ib}u_{kc}\otimes z_az_bz_c+\sum_{a\neq c}u_{ia}u_{ia}u_{kc}\otimes z_az_az_c\\
&&+\sum_{a\neq b}u_{ia}u_{ib}u_{ka}\otimes z_az_bz_a+\sum_{a\neq b}u_{ia}u_{ib}u_{kb}\otimes z_az_bz_b\\
&&+\sum_au_{ia}u_{ia}u_{ka}\otimes z_az_az_a
\end{eqnarray*}

From the first sum we get $abc=-cba$ for a $(3,2)$ span, from the next three sums we get $abc=cba$ for a $(2,2)$ span, and from the last sum we get $abc=cba$ for a $(2,1)$ span.

Since we have as well, trivially, $abc=cba$ for a $(1,1)$ span, we have reached to the defining relations for the quantum group $\bar{O}_N^*$, and we are done.

Finally, the proof for the sphere $\bar{S}^{N-1}_{\mathbb C,**}$ is similar, by adding $*$ exponents in the middle, and by using the last assertion in Lemma 5.6.
\end{proof}

Observe that Theorem 5.7 is a quantum isometry group computation, in the affine sense of \cite{go2}. More precisely, if we define the affine actions on the real/complex spheres to be the actions of closed subgroups $G\subset O_N^+/U_N^+$ given by coaction maps of type $\Phi(z_i)=\sum_ju_{ij}\otimes z_j$, then Theorem 5.7 computes the corresponding quantum isometry groups. We refer to \cite{ch1}, \cite{ch2}, \cite{go2}, \cite{qsa} for more details regarding the affine action formalism.

\section{Further results, conclusion}

We discuss in this section a number of further topics, including the construction and basic properties of the integration functional for our 10 spheres, the Riemannian aspects of these spheres, and a proposal for an extended formalism, comprising 18 spheres.

In order to construct the integration, we use the associated quantum group:

\begin{definition}
Given one of the spheres $S^{N-1}_\times$, we denote by $U_N^\times$ the associated quantum group, and we let $R_N^\times\subset C(U_N^\times)$ be the subalgebra generated by $u_{11},\ldots,u_{1N}$. 
\end{definition}

By the universal property of $C(S^{N-1}_\times)$ we have a morphism $\pi:C(S^{N-1}_\times)\to R_N^\times$ mapping $x_i\to u_{1i}$, and by composing with the restriction of the Haar functional $I:C(U_N^\times)\to\mathbb C$, we obtain a trace $tr:C(S^{N-1}_\times)\to\mathbb C$. In order to prove that $tr$ is ergodic, we use:

\begin{lemma}
The following formula holds, over the sphere $\dot{S}^{N-1}_\times$,
$$\sum_{j_1\ldots j_l}\dot{\delta}_\pi(j_1,\ldots,j_l)z_{j_1}^{\alpha_1}\ldots z_{j_l}^{\alpha_l}=1$$
for any exponent vector $\alpha=(\alpha_1,\ldots,\alpha_k)\in\{1,*\}^k$, and any pairing $\pi\in P_\times(\alpha)$.
\end{lemma}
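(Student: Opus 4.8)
The plan is to establish the identity for all ten spheres at once, by reducing an arbitrary pairing $\pi$ to the elementary relation $\sum_iz_iz_i^*=1$, using the categorical structure of Section 4. Introduce the linear map $\rho:(\mathbb C^N)^{\otimes l}\to C(\dot{S}^{N-1}_\times)$ defined by $\rho(e_{j_1}\otimes\ldots\otimes e_{j_l})=z_{j_1}^{\alpha_1}\ldots z_{j_l}^{\alpha_l}$, with $z^1=z$ and $z^*=z^*$; the left-hand side of the statement is then precisely $\rho(\dot{\xi}_\pi)$, for $\dot{\xi}_\pi$ as in Definition 5.1. Everything below is meant to work uniformly in the ten cases, the real/complex distinction amounting only to which exponent vectors $\alpha$ are compatible with a given $\pi$, and the twisted/untwisted distinction amounting only to signs.

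I would begin with the base case $l=2$: here $\pi$ is a single pair, $\pi\in P_\times(\alpha)$ forces $\alpha\in\{(1,*),(*,1)\}$ in the complex cases, and $\rho(\dot{\xi}_\pi)$ is then one of $\sum_iz_iz_i^*$, $\sum_iz_i^*z_i$, $\sum_ix_i^2$, each equal to $1$ by a defining relation inherited from $S^{N-1}_{\mathbb R,+}$ or $S^{N-1}_{\mathbb C,+}$. Next, the noncrossing case $\pi=\pi_0\in NC_2(l)$: here $\dot{\xi}_{\pi_0}=\xi_{\pi_0}$ by Proposition 4.2 (3), so no sign needs to be tracked; writing the sum over $j$ as a sum over independent values on the $l/2$ pairs, and using that a noncrossing pairing has an innermost adjacent pair $\{s,s+1\}$ (necessarily with $\alpha_s\neq\alpha_{s+1}$ in the complex cases), the factor $\sum_iz_i^{\alpha_s}z_i^{\alpha_{s+1}}=1$ splits off, so that $\rho(\xi_{\pi_0})$ equals the analogous quantity for the noncrossing pairing on $l-2$ points obtained by deleting $\{s,s+1\}$. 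An induction on $l$ then gives $\rho(\xi_{\pi_0})=1$, which already settles the four spheres whose category is $NC_2$.

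For a general $\pi\in P_\times(\alpha)$ I would write $\pi=\sigma\cdot\pi_0$ with $\pi_0$ a fixed noncrossing pairing and $\sigma$ a leg-permutation admissible for $P_\times$, expressed as a product of the elementary generators of the category — copies of the basic crossing when $\mathbb K=\mathbb R,\mathbb C,\bar{\mathbb R},\bar{\mathbb C}$, and copies of the half-liberating permutation in the half-liberated cases — each padded with identities, cf. \cite{bve}. As all the diagrams involved go up-to-down, no closed loops occur, and Proposition 4.6 gives $\dot{\xi}_\pi=\dot{T}_\sigma\dot{\xi}_{\pi_0}$ exactly. The crucial observation is then that for each generator $g$ one has $\rho(\dot{T}_g\,\eta)=\rho'(\eta)$, where $\rho'$ is the evaluation map associated to the exponent vector obtained by permuting $\alpha$ accordingly: unwinding this via the formulas for $\dot{T}_g$ in Proposition 4.4, it is literally one of the defining relations of $\dot{S}^{N-1}_\times$ from Section 1, namely $ab=\pm ba$ in the classical/twisted cases and the signed half-commutation relation $abc=\pm cba$ in the half-liberated cases, valid for all choices of exponents on $a,b,c$. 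Applying this once per generator appearing in $\dot{T}_\sigma$ reduces $\rho(\dot{\xi}_\pi)$ to $\rho_0(\dot{\xi}_{\pi_0})$ with $\pi_0\in NC_2$, and the previous step concludes.

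I expect the main obstacle to be in this last step, in two respects. First, one must justify that every $\pi\in P_\times(\alpha)$ really does decompose as $\sigma\cdot\pi_0$ with $\sigma$ a loop-free product of the category generators — a purely combinatorial fact about $P_2$, $P_2^*$ and the black-to-white leg labelling, which I would import from \cite{bve}. Second, in the twisted cases one must check that the signs carried by $\dot{T}_g$ match the signs in the twisted (anti)commutation and half-commutation relations of the spheres; but this is precisely what was verified, at the level of intertwiners, in the proofs of Propositions 4.4 and 4.5, so it reduces to re-reading those computations in the present language.
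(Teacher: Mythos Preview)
Your proposal is correct and follows essentially the same route as the paper's proof: reduce an arbitrary $\pi$ to a noncrossing pairing by a sequence of switches, and observe that at each switch the sign picked up by $\dot\delta_\pi$ matches the sign in the sphere's (anti)commutation or half-commutation relation, so the value of the sum is preserved. The paper does this by hand---it simply writes $\sum_{j:\ker j\leq\pi}\varepsilon(\ker j)z_{j_1}\ldots z_{j_l}=\sum_{j':\ker j'\leq\pi'}z_{j'_1}\ldots z_{j'_l}$ after putting $\pi$ in standard form $\pi'=\sqcap\ldots\sqcap$, and then sums to $1$---whereas you package the same computation into the identity $\dot\xi_\pi=\dot T_\sigma\dot\xi_{\pi_0}$ from Proposition 4.6 and peel off the generators of $\sigma$ one at a time. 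Your version makes the role of the categorical machinery more explicit, at the cost of having to invoke the decomposition of $\pi\in P_\times$ as a product of category generators over a noncrossing pairing (which you correctly flag, and which the paper likewise defers to \cite{bgo}, \cite{bve} in the half-liberated case); the paper's version is shorter because it tracks the signs directly without naming $\dot T_\sigma$.
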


\begin{proof}
In the untwisted case this was proved in \cite{bgo}. Let us discuss now the case of $\bar{S}^{N-1}_\mathbb R$. By switching as for putting $\pi$ in standard form, $\pi'=\sqcap\ldots\sqcap$, we obtain:
\begin{eqnarray*}
\sum_{j_1\ldots j_l}\bar{\delta}_\pi(j_1,\ldots,j_l)z_{j_1}\ldots z_{j_l}
&=&\sum_{j:\ker j\leq\pi}\varepsilon(\ker j)z_{j_1}\ldots z_{j_l}\\
&=&\sum_{j':\ker j'\leq\pi'}z_{j_1'}\ldots z_{j_l'}\\
&=&\sum_{j_1'j_3'\ldots j_{l-1}'}z_{j_1'}^2\ldots z_{j_{l-1}'}^2=1
\end{eqnarray*}

For the sphere $\bar{S}^{N-1}_\mathbb C$ the proof is similar, with the last equality coming this time from $\sum_jz_jz_j^*=\sum_jz_j^*z_j=1$. Finally, in the half-liberated cases the proof is similar as well, by using $abc\to cba$ switches as in \cite{bgo}, and in the free cases the result is clear.
\end{proof}

Now back to the trace constructed above, we have here:

\begin{proposition}
Consider the trace $tr:C(S^{N-1}_\times)\to\mathbb C$ obtained by composing the canonical surjection onto the first row algebra of $U_N^\times$ with the Haar functional.
\begin{enumerate}
\item $tr$ is invariant, $(id\otimes tr)\Phi(x)=tr(x)1$.

\item $tr$ is ergodic, $(I\otimes id)\Phi=tr(.)1$.

\item $tr$ is the unique positive unital invariant trace on $C(S^{N-1}_\times)$.
\end{enumerate}
\end{proposition}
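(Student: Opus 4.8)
The plan is to establish the three assertions in order, using the Weingarten calculus from Section~5 together with Lemma~6.2. First I would prove invariance (1). Since $\Phi(z_i)=\sum_j u_{ij}\otimes z_j$, applying $id\otimes tr$ to a monomial $z_{i_1}^{\alpha_1}\cdots z_{i_k}^{\alpha_k}$ produces $\sum_{j_1\ldots j_k} u_{i_1 j_1}^{\alpha_1}\cdots u_{i_k j_k}^{\alpha_k}\otimes tr(z_{j_1}^{\alpha_1}\cdots z_{j_k}^{\alpha_k})$, and since $tr=I\circ\pi$ with $\pi(z_j)=u_{1j}$, the scalar $tr(z_{j_1}^{\alpha_1}\cdots z_{j_k}^{\alpha_k})$ equals $\int_{U_N^\times} u_{1j_1}^{\alpha_1}\cdots u_{1j_k}^{\alpha_k}$. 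Expanding this with the Weingarten formula of Proposition~5.3 and summing over the $j$'s, the sum $\sum_{j}\dot\delta_\sigma(j_1,\ldots,j_k)\,u_{i_1j_1}^{\alpha_1}\cdots u_{i_kj_k}^{\alpha_k}$ collapses — by the defining property of the intertwiner $\dot T_\sigma\in Hom(u^{\otimes\alpha},\ldots)$, equivalently by the relation $\dot T_\sigma$ being fixed, this produces exactly the element $\dot\delta_\pi(i_1,\ldots,i_k)$ times a scalar, reassembling into $tr(z_{i_1}^{\alpha_1}\cdots z_{i_k}^{\alpha_k})\cdot 1$. Concretely, one matches the two Weingarten expansions: $(id\otimes tr)\Phi$ applied to a monomial and $tr$ of that monomial both expand over $P_\times(\alpha)\times P_\times(\alpha)$ with the same Weingarten weights $W^\alpha_{kN}(\pi,\sigma)$, and the sum over $j$ in the first converts $\dot\delta_\sigma(j)$-against-$u$ into $\dot\delta_\sigma(i)$ by the intertwining property, giving agreement.

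For ergodicity (2), I would use Lemma~6.2 as the crucial input. The statement $(I\otimes id)\Phi(x)=tr(x)1$ must be checked on monomials $x=z_{i_1}^{\alpha_1}\cdots z_{i_k}^{\alpha_k}$. Here $(I\otimes id)\Phi(x)=\sum_{j}\left(\int_{U_N^\times}u_{i_1j_1}^{\alpha_1}\cdots u_{i_kj_k}^{\alpha_k}\right)z_{j_1}^{\alpha_1}\cdots z_{j_k}^{\alpha_k}$. Expanding the integral by Proposition~5.3 gives $\sum_{\pi,\sigma}\dot\delta_\pi(i)\,W^\alpha_{kN}(\pi,\sigma)\sum_{j}\dot\delta_\sigma(j)\,z_{j_1}^{\alpha_1}\cdots z_{j_k}^{\alpha_k}$, and now Lemma~6.2 tells us precisely that each inner sum $\sum_j\dot\delta_\sigma(j)z_{j_1}^{\alpha_1}\cdots z_{j_k}^{\alpha_k}=1$. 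Hence $(I\otimes id)\Phi(x)=\left(\sum_{\pi,\sigma}\dot\delta_\pi(i)\,W^\alpha_{kN}(\pi,\sigma)\right)1$, and this scalar is exactly $\int_{U_N^\times}u_{1i_1}^{\alpha_1}\cdots$ summed — more precisely it is $tr(x)$, since $tr(x)=\int u_{1i_1}^{\alpha_1}\cdots u_{1i_k}^{\alpha_k}=\sum_{\pi,\sigma}\dot\delta_\pi(1\ldots1)W^\alpha_{kN}(\pi,\sigma)\dot\delta_\sigma(i)$ and one checks $\dot\delta_\pi(1,\ldots,1)=1$ for all $\pi$, matching the coefficient above after using symmetry of the roles of $i$ and the index $1$. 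I would write this identification carefully.

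For uniqueness (3), I would argue as follows. Let $\tau$ be any positive unital invariant trace on $C(S^{N-1}_\times)$. Invariance means $(id\otimes\tau)\Phi(x)=\tau(x)1$ in $C(U_N^\times)$ for all $x$. Apply the Haar functional $I$ of $U_N^\times$ to both sides: the right side gives $\tau(x)$, and the left side gives $(I\otimes\tau)\Phi(x)=(\,(I\otimes id)\Phi(x)\,)$ evaluated by $\tau$, which by part (2) equals $\tau(tr(x)1)=tr(x)$. Hence $\tau(x)=tr(x)$ for all $x$, so $\tau=tr$. Positivity of $\tau$ is in fact not even needed for this last step; it only guarantees $\tau$ exists as a state in the first place.

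The main obstacle I anticipate is bookkeeping in parts (1) and (2): one must be careful that the Weingarten formula of Proposition~5.3, stated for products $u_{i_1j_1}^{\alpha_1}\cdots u_{i_kj_k}^{\alpha_k}$ with a fixed exponent vector $\alpha$, matches up index-by-index with the coaction $\Phi(z_i^{\alpha})=\sum_j u_{ij}^{\alpha}\otimes z_j^{\alpha}$, in particular that the set $P_\times(\alpha)$ of pairings is the correct one on both sides (it is, by the Schur--Weyl result Theorem~4.7) and that the sign conventions $\dot\delta$ versus $\bar\delta$ in the twisted cases are handled consistently — the signatures $\varepsilon(\tau)$ appearing in $\bar\delta_\sigma$ must cancel correctly against those implicitly built into Lemma~6.2. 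Once these compatibilities are in place, each of the three parts is a short formal computation, and step (3) is immediate from (2).
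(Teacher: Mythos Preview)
Your proposal is correct and matches the paper's approach essentially line for line: part~(2) via the Weingarten formula plus Lemma~6.2 together with $\dot\delta_\pi(1,\ldots,1)=1$, part~(3) by applying $\tau$ to the ergodicity identity from~(2), and the final identification you flag in~(2) is, as the paper notes, exactly the symmetry $W^\alpha_{kN}(\pi,\sigma)=W^\alpha_{kN}(\sigma,\pi)$ of the Weingarten matrix. For part~(1) the paper is terser, invoking only invariance of the Haar integral of $C(U_N^\times)$, whereas you spell out the same mechanism explicitly through the Weingarten expansion and the fixed-vector property $\sum_j\dot\delta_\sigma(j)\,u_{i_1j_1}^{\alpha_1}\cdots u_{i_kj_k}^{\alpha_k}=\dot\delta_\sigma(i)\cdot 1$.
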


\begin{proof}
We use a general method from \cite{bgo}, which was further developed in \cite{bss}. The idea is that the result will follow by using the Weingarten integration formula:

(1) This is clear, by using the invariance of the Haar integral of $C(U_N^\times)$.

(2) It is enough to check the equality on a product $z_{i_1}^{\alpha_1}\ldots z_{i_k}^{\alpha_k}$. The left term is:
\begin{eqnarray*}
(I\otimes id)\Phi(z_{i_1}^{\alpha_1}\ldots z_{i_k}^{\alpha_k})
&=&\sum_{j_1\ldots j_k}I(u_{i_1j_1}^{\alpha_1}\ldots u_{i_kj_k}^{\alpha_k})z_{j_1}^{\alpha_1}\ldots z_{j_k}^{\alpha_k}\\
&=&\sum_{j_1\ldots j_k}\sum_{\pi,\sigma\in P_\times(\alpha)}\dot{\delta}_\pi(i)\dot{\delta}_\sigma(j)W_{kN}^\alpha(\pi,\sigma)z_{j_1}^{\alpha_1}\ldots z_{j_k}^{\alpha_k}\\
&=&\sum_{\pi,\sigma\in P_\times(\alpha)}\dot{\delta}_\pi(i)W_{kN}^\alpha(\pi,\sigma)\sum_{j_1\ldots j_k}\dot{\delta}_\sigma(j)z_{j_1}^{\alpha_1}\ldots z_{j_k}^{\alpha_k}
\end{eqnarray*}

By using Lemma 6.2 the sum on the right is 1, so we get:
$$(I\otimes id)\Phi(z_{i_1}^{\alpha_1}\ldots z_{i_k}^{\alpha_k})=\sum_{\pi,\sigma\in P_\times(\alpha)}\dot{\delta}_\pi(i)W_{kN}^\alpha(\pi,\sigma)1$$

On the other hand, another application of the Weingarten formula gives:
\begin{eqnarray*}
tr(z_{i_1}^{\alpha_1}\ldots z_{i_k}^{\alpha_k})1
&=&I(u_{1i_1}^{\alpha_1}\ldots u_{1i_k}^{\alpha_k})1\\
&=&\sum_{\pi,\sigma\in P_\times(\alpha)}\dot{\delta}_\pi(1)\dot{\delta}_\sigma(i)W_{kN}^\alpha(\pi,\sigma)1\\
&=&\sum_{\pi,\sigma\in P_\times(\alpha)}\dot{\delta}_\sigma(i)W_{kN}^\alpha(\pi,\sigma)1
\end{eqnarray*}

Since the Weingarten function is symmetric in $\pi,\sigma$, this finishes the proof. 

(3) Let $\tau:C(S^{N-1}_\times)\to\mathbb C$ be a trace satisfying the invariance condition. We have:
$$\tau (I\otimes id)\Phi(x)
=(I\otimes\tau)\Phi(x)
=I(id\otimes\tau)\Phi(x)
=I(\tau(x)1)
=\tau(x)$$

On the other hand, according to the formula in (2) above, we have as well:
$$\tau (I\otimes id)\Phi(x)=\tau(tr(x)1)=tr(x)$$

Thus we obtain $\tau=tr$, which finishes the proof.
\end{proof}

As a consequence, we have the following result:

\begin{proposition}
The following algebras, with generators and traces, are isomorphic, when replaced with their GNS completions with respect to their canonical traces:
\begin{enumerate}
\item The algebra $C(S^{N-1}_\times)$, with generators $z_1,\ldots,z_N$.

\item The row algebra $R_N^\times\subset C(U_N^\times)$ generated by $u_{11},\ldots,u_{1N}$.
\end{enumerate}
\end{proposition}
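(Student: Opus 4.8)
The plan is to observe that the statement is a formal consequence of the way $tr$ was built, and then to transport the GNS construction along the canonical surjection. First I would record that the map $\pi:C(S^{N-1}_\times)\to R_N^\times$, $z_i\mapsto u_{1i}$, from Definition 6.1 is genuinely onto: its image is a $C^*$-subalgebra of $C(U_N^\times)$ containing all the generators $u_{11},\ldots,u_{1N}$, hence it is all of $R_N^\times$. Next, by the very definition $tr=I\circ\pi$, with $I$ the restriction to $R_N^\times$ of the Haar functional of $C(U_N^\times)$; thus $tr(a)=I(\pi(a))$ for every $a$, so $\pi$ carries the canonical trace of $C(S^{N-1}_\times)$ onto the canonical trace of $R_N^\times$. (Traciality of the latter on $R_N^\times$ also follows, if needed, from the traciality of $tr$ in Proposition 6.3 together with surjectivity of $\pi$.)

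Now I would build the isomorphism of completions. On the dense subspace of classes $\hat a$ inside the GNS Hilbert space of $(C(S^{N-1}_\times),tr)$, set $U\hat a=\widehat{\pi(a)}$. The identity $tr(b^*b)=I(\pi(b)^*\pi(b))$ shows simultaneously that $U$ is well defined --- the trace-kernel $\{b\mid tr(b^*b)=0\}$ lands inside $\{c\mid I(c^*c)=0\}$ --- and that $U$ is isometric; surjectivity of $\pi$ makes the range of $U$ dense, so $U$ extends to a unitary between the GNS Hilbert spaces of $(C(S^{N-1}_\times),tr)$ and $(R_N^\times,I)$. A one-line computation, $U(a\cdot\hat c)=U\widehat{ac}=\widehat{\pi(ac)}=\pi(a)\cdot\widehat{\pi(c)}$ for $a,c\in C(S^{N-1}_\times)$, where $a\cdot\xi$ denotes the GNS left action of $a$ on $\xi$, shows that conjugation by $U$ intertwines the GNS representation of $C(S^{N-1}_\times)$ with that of $R_N^\times$ and sends the image of $z_i$ to the image of $u_{1i}$. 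Passing to weak closures --- equivalently, to the GNS completions --- $\mathrm{Ad}\,U$ is the asserted isomorphism, matching the listed generators and the two canonical traces.

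I do not expect a genuine obstacle here; the content is entirely formal once one remembers that $tr$ was defined as a pullback of $I$. The one point worth a remark is that $\pi$ itself need not be injective as a $*$-homomorphism of $C^*$-algebras: injectivity is recovered only after completing, because $\ker\pi$ is contained in the trace-kernel ideal and is therefore killed. Equivalently, one may phrase the conclusion as saying that $a\mapsto\pi(a)$ descends to a normal, trace-preserving, surjective $*$-homomorphism between the two finite von Neumann algebras, which is automatically injective since the trace on a GNS completion is faithful and a trace-preserving normal morphism of finite von Neumann algebras with faithful traces has trivial kernel.
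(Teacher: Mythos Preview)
Your argument is correct and matches the paper's own proof in substance. The paper simply records that $\pi$ is surjective, that $tr=I\circ\pi$ (it phrases this via the uniqueness clause of Proposition~6.3, but as you note it is already the definition), and then appeals to ``basic properties of the GNS construction''; what you have written is precisely the unpacking of that phrase, including the point that $\ker\pi$ sits in the trace-kernel and is therefore annihilated upon completion.
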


\begin{proof}
Consider the quotient map $\pi:C(S^{N-1}_\times)\to R_N^\times$, used in the proof of Proposition 6.2. The invariance property of the integration functional $I:C(U_N^\times)\to\mathbb C$ shows that $tr'=I\pi$ satisfies the invariance condition in Proposition 6.2, so we have $tr=tr'$. Together with the positivity of $tr$ and with the basic properties of the GNS construction, this shows that $\pi$ induces an isomorphism at the level of GNS algebras, as claimed.
\end{proof}

We make now the following convention, for the reminder of this paper: 

\begin{definition}
We agree from now on to replace each algebra $C(S^{N-1}_\times)$ with its GNS completion with respect to the canonical trace, coming from $U_N^\times$.
\end{definition}

As a first observation, the classical spheres $S^{N-1}_\mathbb R,S^{N-1}_\mathbb C$ are left unchanged by this modification, because the trace comes from the usual uniform measure on them. The free spheres $S^{N-1}_{\mathbb R,+},S^{N-1}_{\mathbb C,+}$ are however ``cut'' by this construction, for instance because this happens at the quantum group level, since $O_N^+,U_N^+$ are not coamenable. See \cite{ntu}.

Regarding the various half-liberations and twists, here we do not know. The faithfulness question for the trace of $S^{N-1}_{\mathbb R,*}$, which was raised some time ago in \cite{bgo}, is still open.

As in \cite{bgo}, we can now construct spectral triples for our spheres, in some weak sense. The idea is that we have inclusions $\dot{S}^{N-1}_\mathbb R\subset\dot{S}^{N-1}_\times\subset S^{N-1}_{\mathbb C,+}$, and so we have surjective maps $C(S^{N-1}_{\mathbb C,+})\to C(\dot{S}^{N-1}_\times)\to C(\dot{S}^{N-1}_\mathbb R)$, and we can construct the Laplacian filtration as projection/pullback of the Laplacian filtration for $S^{N-1}_{\mathbb C,+}/\dot{S}^{N-1}_\mathbb R$.

More precisely, we have the following construction:

\begin{definition}
Associated to each sphere $S^{N-1}_\times$ is the spectral triple $(A,H,D)$, where $A=C(S^{N-1}_\times)$, the dense subalgebra ${\mathcal A}$ is the linear span of all finite words in the generators $z_i,z_i^*$, and the operator $D$ acting on $H=L^2(A,tr)$ is defined as follows:
\begin{enumerate}
\item Set $H_k=span(z_{i_1}^{\alpha_1}\ldots z_{i_r}^{\alpha_r}|r\leq k,\alpha\in\{1,*\}^r)$.

\item Define $E_k=H_k\cap H_{k-1}^\perp$, so that we have $H=\oplus_{k=0}^\infty E_k$.

\item Finally, set $Dx=\lambda_kx$, for any $x\in E_k$, where $\lambda_k$ are distinct numbers.
\end{enumerate} 
\end{definition}

As pointed out in \cite{bgo}, it is quite unclear what the correct eigenvalues should be. In the various half-liberated cases the problem can be probably approached by using the geometry of the associated projective planes \cite{ddl}. In the free cases the situation seems to require the use of advanced analytic techniques, like those in \cite{cfk}, \cite{dgo}.

This type of issue is in fact well-known in the quantum group context, for noncommutative manifolds constructed by using various liberation procedures. See \cite{bvz}.

Without precise eigenvalues, we are in fact in the orthogonal filtration framework of \cite{bsk}, \cite{thi}. As explained there, having such a filtration suffices for constructing a quantum isometry group. In our case, we can formulate the following result:

\begin{theorem}
We have $G^+(S^{N-1}_\times)=O_N^\times$, with the quantum isometry group taken in the spectral triple sense of \cite{go1}, for all the $5$ real spheres.
\end{theorem}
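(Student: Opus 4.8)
The plan is to reduce everything to Theorem 5.7, via the orthogonal filtration formalism of \cite{bsk}, \cite{thi}. The key observation is that the spectral triple of Definition 6.7 has, by construction, distinct eigenvalues $\lambda_k$ on the pieces $E_k$, so a quantum isometry in the sense of \cite{go1} is exactly a coaction that commutes with $D$, i.e. leaves each $E_k$ invariant; and by the Banica--Skalski machinery the biggest such coaction, subject also to $tr$-invariance, exists and defines $G^+(S^{N-1}_\times)$. So the statement amounts to identifying this universal object with $O_N^\times=U_N^\times$.

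The first step is to pin down the degree-one eigenspace. Since we are in the real case we have $z_i=z_i^*$, so inside $H=L^2(C(S^{N-1}_\times),tr)$ we get $H_0=\mathbb C1$ and $H_1=\mathbb C1+\mathrm{span}(z_1,\dots,z_N)$. A Weingarten computation over $O_N^\times$ (Proposition 5.3, with $P_2(1)=\emptyset$ and $P_2(2)$ a single pairing) gives $tr(z_i)=I(u_{1i})=0$ and $\langle z_i,z_j\rangle=tr(z_iz_j)=\tfrac1N\delta_{ij}$, so the $z_i$ are orthogonal, in particular linearly independent, and therefore $E_1=H_1\cap H_0^\perp=\mathrm{span}(z_1,\dots,z_N)$. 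Consequently any quantum isometry, leaving $E_1$ invariant, has coaction $\Phi(z_i)=\sum_j u_{ij}\otimes z_j$, which is precisely an affine action in the sense of Theorem 5.7; the maximality statement there then yields $G^+(S^{N-1}_\times)\subseteq O_N^\times$. It is exactly here that the real hypothesis is used: for a complex sphere one has $E_1=\mathrm{span}(z_i,z_i^*)$ of dimension $2N$, an isometry need not respect the splitting, and the affine formalism does not apply verbatim, which is why the statement is restricted to the five real spheres.

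For the reverse inclusion I would check that the canonical action $O_N^\times\curvearrowright S^{N-1}_\times$ of Theorem 3.7 is an isometry for this spectral triple. It preserves the trace by Proposition 6.2(1). It respects the word-length grading, since $\Phi$ sends a length-$r$ word $z_{i_1}\cdots z_{i_r}$ to a sum of length-$r$ words tensored with products $u_{i_1j_1}\cdots u_{i_rj_r}$, so $\Phi(H_k)\subseteq C(O_N^\times)\otimes H_k$ for all $k$. A grading-and-trace-preserving coaction automatically preserves the orthogonal filtration built from the grading: $tr$-invariance makes $\Phi$ extend to an isometry $L^2(C(S^{N-1}_\times))\hookrightarrow L^2(O_N^\times)\otimes L^2(C(S^{N-1}_\times))$ intertwining the decompositions $H_k=H_{k-1}\oplus E_k$, whence $\Phi(E_k)\subseteq C(O_N^\times)\otimes E_k$; this is the standard argument of \cite{bsk}. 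Thus $O_N^\times$ acts by isometries, giving $O_N^\times\subseteq G^+(S^{N-1}_\times)$, and combined with the previous paragraph this proves the theorem.

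The substantive content is entirely in Theorem 5.7, which is already available; what remains here is bookkeeping with the orthogonal filtration. The only mildly delicate points are the identification $E_1=\mathrm{span}(z_i)$ in the real case — which genuinely fails in the complex case and explains the restriction in the statement — and the implication ``preserves the grading and the trace'' $\Rightarrow$ ``preserves each $E_k$'', which is a known feature of the Banica--Skalski formalism; I do not expect a serious obstacle.
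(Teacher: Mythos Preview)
Your argument is correct and follows essentially the same route as the paper's proof: both show that the standard $O_N^\times$-coaction preserves each $H_k$ and hence, by unitarity of the induced representation on the GNS space, each $E_k$, so that $O_N^\times$ acts isometrically; and both invoke Theorem 5.7 for the converse inclusion. You supply more detail than the paper does --- in particular the explicit Weingarten computation identifying $E_1=\mathrm{span}(z_i)$, and the remark that this identification is exactly where the real hypothesis enters --- but the skeleton is the same.
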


\begin{proof}
This was proved in \cite{bgo} in the untwisted case, and the proof in the twisted case is similar. Consider indeed the standard coaction $\Phi:C(S^{N-1}_\times)\to C(O_N^\times)\otimes C(S^{N-1}_\times)$. This extends to a unitary representation on the GNS space $H_N^\times$, that we denote by $U$. We have $\Phi(H_k)\subset C(O_N^\times)\otimes H_k$, which reads $U(H_k)\subset H_k$. By unitarity we get as well $U(H_k^\perp)\subset H_k^\perp$, so each $E_k$ is $U$-invariant, and $U,D$ must commute. Thus, $\Phi$ is isometric with respect to $D$. Finally, the universality of $O_N^\times$ follows from Theorem 5.7.
\end{proof}

In the complex case the situation is more delicate, and would require a good understanding of the notion of complex affine action, in the noncommutative Riemannian geometry setting. For an exposition of some of the technical difficulties here, see \cite{go2}.

There are of course many other questions regarding our 10 spheres, and their geometry. Besides the two fundamental questions raised above, regarding the faithfulness of the trace on the full algebra, and the construction of the eigenvalues, further interesting questions regard orientability issues, and the existence of a Dirac operator, cf. \cite{bg2}, \cite{co3}.

To summarize, regarding the geometric structure of our spheres, we have so far more questions than answers. We intend to clarify the situation in a future paper.

\bigskip

We would like to discuss now a possible extension of our formalism, from 10 to 18 spheres. The idea is that such an extension should come in three steps, as follows:

{\bf I.} First, the projective planes for the 10 spheres can be computed by using methods from \cite{bgo}, \cite{bve}, by using Schur-Weyl duality. These are as follows, where $P^N_\mathbb R,P^N_\mathbb C,\bar{P}^N_\mathbb R,\bar{P}^N_\mathbb C,P^N_{\mathbb R,+}$ are by definition the projective planes for $S^{N-1}_\mathbb R,S^{N-1}_\mathbb C,\bar{S}^{N-1}_\mathbb R,\bar{S}^{N-1}_\mathbb C,S^{N-1}_{\mathbb R,+}$:
$$\xymatrix@R=16mm@C=16mm{
P^N_\mathbb C\ar[r]&P^N_\mathbb C\ar[r]&P^N_{\mathbb R,+}&\bar{P}^N_\mathbb C\ar@.[l]&\bar{P}^N_\mathbb C\ar@.[l]\\
P^N_\mathbb R\ar[r]\ar[u]&P^N_\mathbb C\ar[r]\ar[u]&P^N_{\mathbb R,+}\ar[u]&\bar{P}^N_\mathbb C\ar@.[u]\ar@.[l]&\bar{P}^N_\mathbb R\ar@.[u]\ar@.[l]}$$

{\bf II.} We recall from \cite{ban} that the free complexification operation amounts in multiplying the standard coordinates by a unitary which is free from them. The free complexifications of the 10 spheres can be computed by using the projective planes and techniques from \cite{ban}, \cite{rau}, the conclusion being that the diagram is as follows, with $\dot{S}^{N-1}_{\mathbb C,*}$ obtained via relations of type $ab^*c=\pm cb^*a$, and $\dot{S}^{N-1}_{\mathbb C,\#}$ obtained via relations of type $ab^*=\pm ba^*,a^*b=\pm b^*a$:
$$\xymatrix@R=15mm@C=15mm{
S^{N-1}_{\mathbb C,*}\ar[r]&S^{N-1}_{\mathbb C,*}\ar[r]&S^{N-1}_{\mathbb C,+}&\bar{S}^{N-1}_{\mathbb C,*}\ar@.[l]&\bar{S}^{N-1}_{\mathbb C,*}\ar@.[l]\\
S^{N-1}_{\mathbb C,\#}\ar[r]\ar[u]&S^{N-1}_{\mathbb C,*}\ar[r]\ar[u]&S^{N-1}_{\mathbb C,+}\ar[u]&\bar{S}^{N-1}_{\mathbb C,*}\ar@.[u]\ar@.[l]&\bar{S}^{N-1}_{\mathbb C,\#}\ar@.[u]\ar@.[l]}$$

{\bf III.} The problem now is that, when adding these 4 new spheres, we will lose the fact that our set of spheres is stable under intersections. More precisely, in order for this to hold, we must add 4 more spheres, namely $\dot{S}^{N-1}_{\mathbb C,\circ}=\dot{S}^{N-1}_{\mathbb C,\#}\cap\dot{S}^{N-1}_{\mathbb C,**}$ and $\dot{S}^{N-1}_{\mathbb C,-}=\dot{S}^{N-1}_{\mathbb C,\circ}\cap\dot{S}^{N-1}_\mathbb C$. The diagram of inclusions between the $18$ spheres is then as follows:
$$\xymatrix@R=4mm@C=5mm{
&&S^{N-1}_{\mathbb C,\#}\ar[dr]&&&&\bar{S}^{N-1}_{\mathbb C,\#}\ar@.[dl]&\\
&S^{N-1}_{\mathbb C,\circ}\ar[dr]\ar[ur]&&S^{N-1}_{\mathbb C,*}\ar[r]&S^{N-1}_{\mathbb C,+}&\bar{S}^{N-1}_{\mathbb C,*}\ar@.[l]&&\bar{S}^{N-1}_{\mathbb C,\circ}\ar@.[ul]\ar@.[dl]\\
S^{N-1}_{\mathbb C,-}\ar[ur]\ar[dr]&&S^{N-1}_{\mathbb C,**}\ar[ur]&&&&\bar{S}^{N-1}_{\mathbb C,**}\ar@.[ul]&&\bar{S}^{N-1}_{\mathbb C,-}\ar@.[ul]\ar@.[dl]\\
&S^{N-1}_\mathbb C\ar[ur]&&&&&&\bar{S}^{N-1}_\mathbb C\ar@.[ul]\\
S^{N-1}_\mathbb R\ar[uu]\ar[rr]&&S^{N-1}_{\mathbb R,*}\ar[rr]\ar[uu]&&S^{N-1}_{\mathbb R,+}\ar[uuu]&&\bar{S}^{N-1}_{\mathbb R,*}\ar@.[ll]\ar@.[uu]&&\bar{S}^{N-1}_\mathbb R\ar@.[ll]\ar@.[uu]}$$

By functoriality, the set of 18 spheres follows to be stable under free complexification.

Regarding the extension of our various results, the first, and main problem, concerns the axiomatization. The complexification formula $z_i=ux_i$ suggests to use diagrams with each leg labelled either $\circ\times$ or $\times\bullet$, with the simplification rules $\circ\bullet\to\emptyset$ and $\bullet\circ\to\emptyset$. We believe that an axiomatization is possible along these lines, and that this should lead to an extension of the other results as well, but we do not have any precise result here.

Regarding some further extensions of our $10+8$ formalism, interesting here, as a technical ingredient, would be to have classification results for the easy quantum groups $U_N\subset G\subset U_N^+$, or more generally for the easy quantum groups $O_N\subset G\subset U_N^+$. In principle, the needed ingredients for dealing with such questions are available from \cite{bsp}, \cite{fre}, \cite{rwe}. In practice, however, it is not clear what the ``19-th sphere'' should be.

As a general conclusion, in the undeformed world we have $10+8$ main geometries. For the simplest such geometry, the one of $\mathbb R^N$, the group $O_N$ appears twice, first as a quantum isometry group, $O_N=G^+(S^{N-1}_\mathbb R)$, and second as a manifold, $O_N\subset S^{N^2-1}_\mathbb R$. The situation is similar in the complex case, and for the remaining $8+8$ geometries as well. With this perspective in mind, several results concerning the subgroups $G\subset O_N$, taken either as groups, or as manifolds, should have extensions to other geometries.

This adds to the various questions raised throughout the paper.

\end{document}